\newcommand{\rotcong}{\rotatebox{90}{$\cong$}}
\definecolor{mycol}{rgb}{5,0,0}
\theoremstyle{plain}
\newtheorem{theo}{Theorem}[section]
\newtheorem{lemma}[theo]{Lemma}
\newtheorem{prop}[theo]{Proposition}
\newtheorem{corollary}[theo]{Corollary}
\newtheorem{remark}[theo]{Remark}
\newtheorem{example}[]{Example}
\theoremstyle{definition}
\newcommand{\F}{\mathbb{F}}
\newcommand{\Z}{\mathbb{Z}}
\newcommand{\Q}{\mathbb{Q}}
\newcommand{\R}{\mathbb{R}}
\newcommand{\N}{\mathbb{N}}
\newcommand{\cm}{\mathcal{M}}
\newcommand{\M}{\mathcal{M}}
\newcommand{\ok}{\mathcal{O}_K}
\newcommand{\rank}{{\rm rank}}
\renewcommand{\epsilon}{\varepsilon}
\newcommand{\ord}{\mathop{\mathrm{ord}}\nolimits}
\newcounter{enumi_saved}
\def\imod#1{\allowbreak\mkern10mu({\operator@font mod}\,\,#1)}
  \title{Finitely generated abelian groups of units}
\begin{document}
\subjclass[2010]{16U60, 20K15, 13A99}
\keywords{
Commutative algebra;
Group theory;
Fuchs' Problem;
Units groups;
Torsion-free  rings;
Cyclotomic extensions;
Cyclotomic polynomials.
}
\maketitle
\begin{center}\author{\textsc{Ilaria Del Corso\footnote{ Dipartimento di Matematica Universit\`a di Pisa,\\ e-mail: ilaria.delcorso@unipi.it} }}
\end{center}

\begin{abstract}
In
1960 Fuchs posed the problem of characterizing  the  groups which are the groups of units of  commutative rings. 
In the following years, some partial answers have been given to this question in particular cases.

In this paper we address Fuchs' question for {\it finitely generated abelian} groups and we consider the problem of characterizing those groups which arise in some  fixed classes of rings $\mathcal C$, namely the integral domains, the torsion free rings and the reduced rings.


Most of the paper is devoted to the study of the class of torsion-free rings, which needs a substantially deeper study.
 \end{abstract}
 
\section{Introduction}
\subsection{General introduction to the problem}
The study of the group of units of a ring is an old problem. The first general result is the classical Dirichlet's Unit Theorem (1846), which describes the group of units of the ring of integers $\ok$ of a number field $K$: the group of units $\ok^*$ is a finitely generated abelian group of the form $C_{2n}\times\Z^g$ where $n\ge1$ and $g$ is 
determined by
the structure of the field $K$.

In 1940 G. Higman discovered a perfect analogue of Dirichlet's Unit Theorem for a group ring $\Z T$ where $T$ is a finite abelian group: $(\Z T)^*\cong\pm T\times\Z^g$ for a suitable explicit constant $g$.

In 1960 Fuchs in  \cite[Problem 72]{Fuchs60} 
posed
the following problem.

\begin{itemize}
\item[]{\sl Characterize the groups which are the groups of all units in a commutative and associative ring with identity.}
\end{itemize}
In the subsequent years, this question has been considered by many authors.
A first result is due to  Gilmer \cite{Gilmer63}, who considered the case of {\it finite commutative rings}, classifying  the possible cyclic groups that arise in this case. 
An important  contribution to the problem can be derived from the results by  Hallett and Hirsch \cite{HallettHirsch65}, and subsequently by Hirsch and Zassenhaus
\cite{HirschZassenhaus66}, combined with  \cite{Corner63}. From their study it is possible to deduce that if  a finite group is  the group of units  of a reduced and  torsion free ring, then it must satisfy some necessary conditions, namely, it must be a subgroup of a direct product of groups of a given  family.

Later on, Pearson and Schneider \cite{PearsonSchneider70} combined the result of Gilmer and the result 
of Hallett and Hirsch to describe explicitly all possible {\it finite cyclic groups} that can occur as $A^*$ for 
some ring $A$. 

 Recently, Chebolu and Lockridge \cite{CheboluLockridge15} were able to 
classify the {\it indecomposable abelian groups}  which occur as groups of units of a ring. 

In the papers \cite{DCDampa} \cite{dcdBLMS} R. Dvornicich and the author studied Fuchs' question for {\it finite abelian groups} and for a general ring of any characteristic, obtaining necessary conditions  for a group to be realizable, and  producing  infinite families of both realizable and non-realizable groups. Moreover, they got a complete classification of the group of units realizable in some  particular classes of rings (integral domains, torsion-free rings and reduced rings).

The study  of groups of  units has been investigated also for non abelian groups. Much has been said about the units of group rings. Recently, the  finite dihedral groups and the simple groups   that are realizable
as the group of units of a ring  have been classified (see  \cite{CheboluLockridge17} and \cite{DavisOcchipinti2014}).

\subsection{The questions studied in the paper}
In this paper we consider Fuchs' question for {\it finitely generated abelian} groups and we consider the problem of characterizing  those groups which arise in some fixed classes of rings $\mathcal C$, namely the integral domains, the torsion free rings and the reduced rings.

This question is twofold: on the one hand, we have to establish 
which
finite abelian groups $T$  (up to isomorphism) occur as the torsion subgroup of $A^*$ when $A$ varies in $\mathcal C$. On the other hand, we have to determine 
the possible values of the rank of $A^*$ when $(A^*)_{tors}\cong T$. 
Therefore, the situation becomes substantially different from the case when the group of units is finite and abelian, which has  been studied already in \cite{DCDampa} and \cite{dcdBLMS}.
  
\subsection{Integral domains: result and idea of proof} In Section \ref{integral-domains} we focus on the study of groups of units of integral domains. Our main tools are Dirichlet's Unit Theorem and the properties of cyclotomic extensions. The principal result is the following theorem in which we collect the results of Theorems \ref{domini-char0} and \ref{domini-charp}.

{\bf Theorem A: }
{\sl
 The  finitely generated abelian groups that occur as groups of units of integral domains are:
 
 i) the groups of the form $C_{2n}\times\Z^g$, with $n\in\N$, $g\ge\frac{\phi(2n)}2-1$, for domains of characteristic zero;
 
 ii) the groups  of the form $\F_{p^n}^*\times\Z^g$ with $n\ge1$ and $g\ge0$, for domains of finite characteristic.
 }
\smallskip 

As a particular case we get the characterization of the finite abelian groups which are realizable as group of units of an integral domain  (see Corollary \ref{cor-domini}).

Finally, in Proposition \ref{domini-int} we describe the finitely generated abelian groups that occur as group of units of an integral domain $A$ which is integral over $\Z$.

\subsection{Torsion-free rings: result and idea of proof}
The most relevant part of the paper  is the classification of the finitely generated abelian groups of units realizable with torsion-free rings (Sections \ref{sec:torsion-free-prel} and \ref{sec:torfree}). We remark that the study of the group of units of torsion free rings has become classical in the literature (see the aforementioned papers by Hallett, Hirsch and Zassenhaus) and that   the finitely generated abelian group rings belong to this class.

In Theorem \ref{torfree} we prove the following

{\bf Theorem B: }
{\sl
 Let $T$ be a finite abelian group of even order. Then there exists an explicit constant $g(T)$ depending on  $T$ (see \eqref{eqgT} for the explicit value of $g(T)$) such that the following holds:
 the group $T\times\Z^r$ is the group of units of a torsion free ring 
 if and only if  $r\ge g(T).$}
 \smallskip

 The proof is rather long and requires many steps.
 The first step is the reduction to the study of the subring of $A$ generated over $\Z$ by the torsion units. This ring has the same torsion units as $A$ and is finitely generated and integral over $\Z$. Restricting to study  these rings, in Proposition \ref{QB} we show that the $\Q$-algebra $A\otimes_\Z\Q$ is  semisimple and is a finite product of cyclotomic fields (for short, a cyclotomic $\Q$-algebra).
 The next 
 step is the study of the  units of the  subrings of $A$ of type $\Z[\alpha]$, with $\alpha$ a torsion unit of $A$, in some particular cases  (see Propositions \ref{prop:zetaalpha} and
 \ref{units_1n}).
 Once  these preliminary results are established, we pass to the proof of the theorem, which requires two parts.
 
 On the one hand, we have to show that if $A$ is a torsion-free ring with $(A^*)_{tors}\cong T$, then $\rank(A^*)\ge g(T)$. This is done through  the analysis of the possible maximal order of $T$-admissible cyclotomic $\Q$-algebras (namely, cyclotomic $\Q$-algebras which could admit a subring with $(A^*)_{tors}\cong T$). This gives  a first lower bound on the rank of the group of units (Proposition \ref{gT}).   This ``natural'' bound works only if the 2-Sylow subgroup of $T$ has ``enough'' cyclic factors of minimal order in its decomposition. If not, the actual bound is bigger than the natural one: 
 this is described in Proposition \ref{nom0T}.
  
  On the other hand, for each $T$ we have to construct a torsion-free ring $A$ with $A^*\cong T\times \Z^{g(T)}$: the construction of orders with a bigger rank can then be obtained via localization.
  In the previous part for a given $T$ we have identified  a maximal order $\cm_T$ of a cyclotomic $\Q$-algebra with $\rank(\cm_T^*)=g(T)$.  We construct $A$ as an order of $\cm_T$, hence $\rank(A)=\rank(\cm_T^*)=g(T)$ (see Lemma \ref{finito}). The group $(\cm_T^*)_{tors}$ contains a subgroup isomorphic to $T$ and it differs  from $T$ only in the 2-Sylow subgroup: our  task is  to construct an order with a 2-Sylow as small as possible. 
  
We note that also in this case the results of \cite{dcdBLMS} on finite abelian groups of units are recovered as a corollary of this more general result.

 \subsection{Reduced rings: result and idea of proof}
In  Section \ref{reduced} we deal with the units of reduced rings. 
%

For a non reduced ring $R$  with nilradical $\mathcal N$,  it is known that the  $R^*$ is an extension of $(R/\mathcal N)^*$ by $1+\mathcal N$ (see Proposition \ref{successioneesatta}). So 
 the study of  units of reduced rings  is also a step towards the understanding of the units of 
general rings. 


In Theorem \ref{reduced-all} we prove the following.
 
 {\bf Theorem C: }
{\sl
The  finitely  generated abelian groups that occur as group of units of a reduced ring are those of the form
$$\prod_{i=1}^k\F_{p_i^{n_i}}^*\times T\times \Z^g$$
where $k, n_1,\dots, n_k$ are positive integers, $\{p_1,\dots,p_k\}$ are  not necessarily distinct primes, $T$ is any finite abelian group of even order and $g\ge g(T)$.}
\smallskip

The proof is achieved by using a result by Pearson and Schneider \cite[Prop.~1]{PearsonSchneider70} which allows one to split a generic reduced ring $A$ as a direct sum  $A_1\oplus A_2$ where $A_1$ is finite and $A_2$ is torsion-free. Putting together our previous results on torsion-free rings  with some properties of the finite rings we get the classification of the groups of units in this case.

{\it Acknowledgement:} I  wish to thank  Cornelius Greither for his careful reading of the paper and for suggesting to me stylistic improvements and a refinement of the proof of Proposition \ref{QB}. I wish also to warmly thank the anonymous referees for their careful  reading of the paper. Their suggestions have been fundamental for improving the readability of the paper.

\section{Notation and preliminary results}
\label{notation}
Let $A$ be a ring with 1: throughout  the paper we will assume that its group of units  $A^*$ is finitely generated and abelian. Let $(A^*)_{tors}$ denote its torsion subgroup and let $g_A$ be its rank so that 
$$A^*\cong (A^*)_{tors}\times \Z^{g_A}.$$

  Let $A_0$ be the fundamental subring of $A$, namely  $A_0=\Z$ or $\Z/n\Z$ depending on whether the characteristic of $A$ is 0 or $n$. It is immediate to check that the ring $A_0[A^*]$ has the same group of units as $A$.  Since we are interested in the classification of the possible groups of units, we can  assume without loss of generality  that $A$ is a ring of type $A_0[A^*]$. In particular, we will always assume that $A$ is commutative and that it is finitely generated over $A_0$.     
  
Let $B$ the subring of $A$ generated over $A_0$ by the torsion units of $A$, namely $B\cong A_0[(A^*)_{tors}]$.   
It is important to note that all the elements of $(A^*)_{tors}$ are integral over $A_0$, since they have  finite order. This ensures that $B$ is {\em commutative, finitely generated and integral over $A_0$}. 

\begin{lemma}
\label{minimum}
{\sl $B^*\cong (A^*)_{tors}\times \Z^{g_B}$ and $g_B\le g_A$. Moreover, if the characteristic of $A$ is positive, then $B^*=(A^*)_{tors}$.
}
\end{lemma}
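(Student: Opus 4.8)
The plan is to compare the unit groups of $B$ and $A$ directly, using that $B$ contains all of $(A^*)_{tors}$ by construction. First I would observe that since $B\subseteq A$ and an element of $B$ that is a unit in $B$ is certainly a unit in $A$, we have an injection $B^*\hookrightarrow A^*$; moreover this injection is the identity on torsion, because $(A^*)_{tors}\subseteq B$ and every element of $(A^*)_{tors}$ is a unit of $B$ (its inverse, being a power of itself, also lies in $B$). Hence $(A^*)_{tors}\subseteq B^*$, and conversely any torsion unit of $B$ is a torsion unit of $A$, so $(B^*)_{tors}=(A^*)_{tors}$. Writing $B^*\cong (B^*)_{tors}\times\Z^{g_B}$, this identifies the torsion part as claimed, and the inclusion $B^*\hookrightarrow A^*\cong (A^*)_{tors}\times\Z^{g_A}$ forces $g_B\le g_A$ by comparing free ranks (a subgroup of a finitely generated abelian group has rank at most that of the ambient group).

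For the last assertion, suppose $A$ has positive characteristic $n$. Then $B=A_0[(A^*)_{tors}]=(\Z/n\Z)[(A^*)_{tors}]$ is a quotient of the group ring $(\Z/n\Z)[(A^*)_{tors}]$ of a finite abelian group over a finite ring, hence $B$ is a finite ring. A finite ring has a finite unit group, so $B^*$ is finite, i.e. $g_B=0$ and $B^*=(B^*)_{tors}=(A^*)_{tors}$.

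The only point requiring a little care is the claim that $B$ is finite in positive characteristic: this uses that $(A^*)_{tors}$ is finitely generated (indeed finite, being a finitely generated torsion abelian group) together with $A_0=\Z/n\Z$ being finite, so that $A_0[(A^*)_{tors}]$ is spanned over $A_0$ by the finitely many elements of $(A^*)_{tors}$ and is therefore finite. I do not expect any real obstacle here; the lemma is essentially a bookkeeping step isolating the reduction to the ring $B$, and the substance is simply that passing to $B$ loses none of the torsion units while possibly lowering the rank, and kills the rank entirely in the modular case.
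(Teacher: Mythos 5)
Your proof is correct and follows essentially the same route as the paper: the inclusion $B^*\hookrightarrow A^*$ gives $g_B\le g_A$, the torsion parts are identified by the two-sided containment $(A^*)_{tors}\subseteq (B^*)_{tors}\subseteq (A^*)_{tors}$, and in positive characteristic $B$ is finite. The only cosmetic difference is that you justify the finiteness of $B$ by a direct spanning argument over $\Z/n\Z$ (noting $(A^*)_{tors}$ is closed under products), whereas the paper invokes the general fact that a ring finitely generated and integral over a finite ring is finite; these amount to the same thing here.
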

\begin{proof}
$B$ is a subring of $A$, hence $B^*<A^*$: in particular $B^*$ is finitely generated and $g_B\le g_A$. On the other hand, $(A^*)_{tors}<(B^*)_{tors}<(A^*)_{tors}$ and equality holds.

Moreover, when the characteristic of $A_0$ is positive, then $B$, being integral and finitely generated over $A_0$, is itself finite, so $B^*=(A^*)_{tors}$.
\end{proof}
\begin{remark}
\label{rem-differenze}
{\rm
The previous lemma shows that all possible torsion parts occur already when restricting to consider rings which are generated over $A_0$ by a finite number of integral elements verifying an equation of type $x^n-1$ for some $n$.

The lemma also shows that there is a completely different behavior between the  characteristic zero and positive characteristic rings. In fact,  a finite abelian group $T$ can be isomorphic to the torsion subgroup of the group of units of a ring $A$ of positive characteristic only if it is also the group of units of a finite ring and all the results of \cite{DCDampa} apply in this case. In particular,  not all finite abelian groups can occur.

Instead, when $A_0=\Z$  it will turn out that the torsion subgroup of $A^*$ can be any finite abelian groups of even order, whereas this is not true if we also require that $A^*$ is finite (see Theorem \ref{torfree} and \cite{dcdBLMS}).
Nevertheless,  to determine the minimum rank $g(T)$  such that $T\times \Z^{g(T)}$ is the group of units of some ring $A$, it is sufficient 
to consider the  finitely generated integral extensions of $\Z$. 
}
\end{remark}

In the following subsections we collect some classical results we will need in the paper.

\subsection{Units of Laurent polynomials}  Let $R$ be a reduced ring, namely a ring without non-zero nilpotents. Then the polynomial ring $R[x]$ is reduced and has the same units as $R$ and the ring of Laurent polynomials $R[x, x^{-1}]$
has group of units $\langle R^*, x\rangle$.
Inductively we get that the  group of units of the ring of Laurent polynomials in $k$ indeterminates
$R[x_1\dots, x_k, x_1^{-1},\dots, x_k^{-1}]$ is  isomorphic to $R^*\times\Z^k$.

  \subsection{The Chinese Remainder Theorem}
  \label{CRT}
  Let $R$ be a commutative ring with 1 and let $I,J\subseteq R$ be ideals.
  Then the map
  $$\psi\colon R\to R/I\times R/J$$
  defined by $r\mapsto(r+I, r+J)$ is a ring homomorphism with kernel $I\cap J$ and image $\{(r+I, s+J)\mid r-s\in I+J\}$. The well known Chinese Remainder Theorem (in the following CRT) ensures that $\psi$ is surjective if and only if $I+J=R$.
  
  More generally, if $I_1,\dots, I_n$ are ideals of $R$ we can define the homomorphism
  $$\psi\colon R\to R/{I_1}\times\dots\times R/{I_n}$$
  by $\psi(r)=(r+I_1,\dots, r+I_n)$.
We will refer to the map $\psi$ or to the map  induced by $\psi$ on $R/\cap_{i=1}^nI_n$ as to the CRT map.
  
  \smallskip
  
  In the following we will consider the CRT map when $R=\Z[x]$, $I=(f(x))$ and $J=(g(x))$. If $f(x)$ and $g(x)$ are coprime polynomials, then $I\cap J=IJ=(f(x)g(x))$, so the CRT map   $$\psi\colon\Z[x]/(f(x)g(x))\to\Z[x]/(f(x))\times\Z/(g(x))$$ 
  is an injection and it is an isomorphism if and only if $(f(x),g(x))=\Z[x]$.

  \subsection{Dirichlet's Unit Theorem}
Let $K$ be a number field,  and let $\ok$ be its ring of integers;
the classical Dirichlet's Theorem describes  the groups of units of all  {\em orders} of $K$ (we recall that 
an { order} of $K$ is a subring of $\ok$ which spans $K$ over $\Q$). 
\begin{prop}[Dirichlet's Unit Theorem]
\label{dirichlet}
{\sl
Let $K$ be a number field such that $[K:\Q]=n$ and 
assume that among  the $n$ embeddings of $K$ in $\bar\Q$, $r$ are real (namely map $K$ into $\R$) and $2s$ are non-real ($n=r+2s$). Let $R$ be an order of $K$. Then
$$R^*\cong T\times\Z^{r+s-1}$$
where $T$ is the group of the roots of unity contained in $R$.
}
\end{prop}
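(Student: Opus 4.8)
The plan is to prove the statement in the classical way, via the logarithmic embedding together with Minkowski's geometry of numbers, working directly with the order $R$. Label the $r$ real embeddings $\sigma_1,\dots,\sigma_r\colon K\hookrightarrow\R$, pick one representative $\sigma_{r+1},\dots,\sigma_{r+s}$ from each conjugate pair of non-real embeddings, and set
$$L\colon K^*\longrightarrow\R^{r+s},\qquad L(\alpha)=\bigl(\log|\sigma_1(\alpha)|,\dots,\log|\sigma_r(\alpha)|,\ 2\log|\sigma_{r+1}(\alpha)|,\dots,2\log|\sigma_{r+s}(\alpha)|\bigr),$$
which is a homomorphism from $(K^*,\cdot)$ to $(\R^{r+s},+)$. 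Since $R$ is a subring of $\ok$ spanning $K$ over $\Q$, it is a free $\Z$-module of rank $n$, so the Minkowski embedding $\iota=(\sigma_1,\dots,\sigma_{r+s})\colon K\to\R^r\times\C^s\cong\R^n$ carries $R$ onto a full lattice $\Lambda_R$. If $u\in R^*$ then $N_{K/\Q}(u)=\pm1$, hence $\sum_i L(u)_i=\log|N_{K/\Q}(u)|=0$; thus $L(R^*)$ lies in the hyperplane $H=\{x\in\R^{r+s}:\sum_i x_i=0\}$, which has dimension $r+s-1$. The theorem reduces to three claims: (a) $\ker(L|_{R^*})$ is the group $T$ of roots of unity in $R$, and $T$ is finite; (b) $L(R^*)$ is discrete in $H$; (c) $L(R^*)$ spans $H$.

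For (a) and (b) I would use that a bounded region of $\R^n$ contains only finitely many points of $\Lambda_R$. If $\alpha\in R^*$ and $L(\alpha)$ lies in a bounded region of $\R^{r+s}$, then every $|\sigma_i(\alpha)|$ is bounded above and below, so $\iota(\alpha)$ lies in a fixed bounded region of $\R^n$; hence only finitely many such $\alpha$ exist. Taking the region to be $\{0\}$ shows $\ker(L|_{R^*})$ is finite; every element of it is an algebraic integer all of whose conjugates have absolute value $1$, so by Kronecker's theorem it is a root of unity, whence $\ker(L|_{R^*})=T$ and $T$ is finite. Taking an arbitrary ball instead shows $L(R^*)$ meets every ball in finitely many points, so it is a discrete subgroup of $\R^{r+s}$ contained in $H$, hence a free abelian group of rank at most $r+s-1$.

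The heart of the matter, and the step I expect to be the main obstacle, is (c). The key lemma is: for each index $k$ there is a unit $\epsilon_k\in R^*$ with $L(\epsilon_k)_j>0$ for all $j\ne k$, and hence $L(\epsilon_k)_k<0$ by the sum-zero relation. To build $\epsilon_k$ one iterates Minkowski's convex body theorem. Fix a constant $C=C(K,R)$ large enough that the theorem applies to every box $B_\lambda=\{y\in\R^r\times\C^s:|y_i|\le\lambda_i\}$ with $\prod_{i=1}^r\lambda_i\prod_{i=r+1}^{r+s}\lambda_i^2=C$. Starting from any nonzero $\beta^{(0)}\in R$ and applying the theorem repeatedly, shrinking the $\lambda_i$ with $i\ne k$ by a fixed factor at each stage and enlarging $\lambda_k$ to keep the product equal to $C$, one obtains nonzero $\beta^{(m)}\in R$ with $|N_{K/\Q}(\beta^{(m)})|\le C$ and with $|\sigma_j(\beta^{(m)})|$ strictly decreasing in $m$ for every $j\ne k$. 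Since $[R:\beta^{(m)}R]=|N_{K/\Q}(\beta^{(m)})|\le C$ and $R$ has only finitely many ideals of bounded index, there are $m<m'$ with $\beta^{(m)}R=\beta^{(m')}R$; then $\epsilon_k:=\beta^{(m)}/\beta^{(m')}\in R^*$ and $|\sigma_j(\epsilon_k)|=|\sigma_j(\beta^{(m)})|/|\sigma_j(\beta^{(m')})|>1$ for every $j\ne k$, as required. Finally, deleting one coordinate and the corresponding vector from $L(\epsilon_1),\dots,L(\epsilon_{r+s})$ leaves an $(r+s-1)\times(r+s-1)$ matrix that is strictly diagonally dominant, hence invertible; so these vectors are linearly independent, they span $H$, and together with (b) this shows that $L(R^*)$ is a full lattice in $H$, of rank $r+s-1$.

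Putting (a)--(c) together, $L$ induces an isomorphism $R^*/T\cong L(R^*)\cong\Z^{r+s-1}$; since $\Z^{r+s-1}$ is free, the sequence $1\to T\to R^*\to\Z^{r+s-1}\to1$ splits, and as $T$ is finite it is precisely the torsion subgroup of $R^*$. Therefore $R^*\cong T\times\Z^{r+s-1}$, as claimed. (Alternatively, once the statement is known for $R=\ok$ the general case is immediate: for $m=[\ok:R]$ and $N$ the exponent of the finite group $(\ok/m\ok)^*$ one has $(\ok^*)^N\subseteq R^*$, so $R^*$ has finite index in $\ok^*$ and hence rank $r+s-1$ as well, while $(R^*)_{tors}=T$ by the definition of $T$.)
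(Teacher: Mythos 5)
Your proof is correct: the paper itself gives no argument for this proposition, citing only Neukirch (Ch.~I, \S12), and what you have written is precisely the standard logarithmic-embedding/Minkowski proof given there, correctly adapted to an arbitrary order $R$ (the key points for a non-maximal order --- $[R:\beta R]=|N_{K/\Q}(\beta)|$ for any full lattice, finiteness of the set of ideals of bounded index, and the strict diagonal-dominance argument for the rank --- all go through). Your closing alternative, reducing to $\ok$ via $(\ok^*)^N\subseteq R^*$, is exactly the mechanism the paper itself uses later in Lemma~\ref{finito} for orders in semisimple $\Q$-algebras.
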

For a proof  see  \cite[Ch.1,\textsection12]{Neukirch99}. 

%

\subsection{Cyclotomic polynomials}
\label{subs:cyclo}

 For $n\ge1$ let   $\zeta_n=e^{2\pi i/n}$, then $\zeta_n$ is  a primitive $n$-th root  of unity. Denote  by $\Phi_{n}(x)$ its minimal polynomial over $\Q$: as it is well known, $\Phi_n(x)\in\Z[x]$ and
 $$\Phi_n(x)=\prod_{\substack{j=1,\dots,n\\ (j,n)=1}}(x-\zeta_n^j).$$
 Moreover, $\Q(\zeta_n)$ is a Galois extension of $\Q$ of degree $\phi(n)$, where $\phi$ is the Euler totient function, and its ring of integers is $\Z[\zeta_n]$. 
 
The roots of unity contained in  $\Z[\zeta_n]$ are the $n$-th roots of unity if $n$ is even and the $2n$-roots of unity if $n$ is odd and by Dirichlet's  Unit Theorem 
 $\Z[\zeta_n]^*\cong \langle-\zeta_n\rangle\times \Z^{\frac{\phi(n)}2-1}$ for each $n\ge3.$ 
In the following we will use the notation $(\frac{\phi(n)}2-1)^*$ for the rank of $\Z[\zeta_n]^*$, namely, $(\frac{\phi(n)}2-1)^*=\frac{\phi(n)}2-1$ for $n\ge3$ and $(\frac{\phi(n)}2-1)^*=0$ for $n=1,2$. We will omit the $*$ when $n>2$.


In 
this
paper we will need the following classical property of cyclotomic fields and cyclotomic polynomials. 
Most of the results could be generalized, but we give only  those necessary for our purposes.

 \begin{lemma}
 \label{lemmawashington}
 .
 
   {\sl
    \begin{enumerate}
    \item Suppose that $n$ has at least two distinct prime factors. Then $1-\zeta_n$ is a unit of $\Z[\zeta_n]$ and 
   $$\Phi_n(1)=\prod_{\substack{j=1,\dots, n\\ (j,n)=1}}(1-\zeta_n^j)=1.$$
   \item For $p$ prime and $e>0$, then $1-\zeta_{p^e}$ is a generator of the prime ideal of $\Z[\zeta_{p^e}]$ lying over $(p)$,   $$p\Z[\zeta_{p^e}]=(1-\zeta_{p^e})^{\phi(p^e)}$$ and $$\Phi_{p^e}(1)=\prod_{\substack{j=1,\dots,p^e\\ (j,p^e)=1}}(1-\zeta_{p^e}^j)=p.$$
   \end{enumerate}
   } 
   \end{lemma}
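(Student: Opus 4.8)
The plan is to prove the two parts as essentially standard facts about cyclotomic fields, keeping the write-up short since the paper says it only collects what is needed.

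\medskip

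\textit{Part (1).} Suppose $n$ has at least two distinct prime factors. The key identity is the factorization
\[
x^{n}-1=\prod_{d\mid n}\Phi_d(x),\qquad\text{hence}\qquad \frac{x^{n}-1}{x-1}=1+x+\cdots+x^{n-1}=\prod_{\substack{d\mid n\\ d>1}}\Phi_d(x).
\]
Evaluating at $x=1$ gives $n=\prod_{d\mid n,\ d>1}\Phi_d(1)$. First I would establish the auxiliary fact that for a prime power $d=p^{e}$ one has $\Phi_{p^e}(1)=p$ (this is part (2), which I would actually prove first, or at least the value at $1$), while for $d$ with at least two prime factors $\Phi_d(1)$ is a positive integer. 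A clean induction on the number of prime factors of $n$ then forces $\Phi_n(1)=1$: the product over prime-power divisors $d=p^e\mid n$ already contributes $\prod_{p\mid n}p^{\,v_p(n)}=n$ (grouping the telescoping contributions $p\cdot p\cdots p$, $v_p(n)$ times, for each prime $p\mid n$), so every remaining factor $\Phi_d(1)$ with $d$ not a prime power must equal $1$; applying this to $d=n$ itself (legitimate since $n$ is not a prime power) gives the claim. The identity $\Phi_n(1)=\prod_{(j,n)=1}(1-\zeta_n^{j})=1$ is then just the definition of $\Phi_n$ evaluated at $1$, and since this is a product of the algebraic integers $1-\zeta_n^{j}$ lying in $\Z[\zeta_n]$ equal to $\pm1$, each factor $1-\zeta_n^{j}$ — in particular $1-\zeta_n$ — is a unit of $\Z[\zeta_n]$.

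\medskip

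\textit{Part (2).} Fix a prime $p$ and $e>0$. Here $\Phi_{p^e}(x)=\dfrac{x^{p^e}-1}{x^{p^{e-1}}-1}=1+x^{p^{e-1}}+x^{2p^{e-1}}+\cdots+x^{(p-1)p^{e-1}}$, so substituting $x=1$ immediately yields $\Phi_{p^e}(1)=p$. Writing out $\Phi_{p^e}(1)=\prod_{(j,p^e)=1}(1-\zeta_{p^e}^{j})=p$ gives the stated product formula. For the ideal statement, I would argue that all the factors $1-\zeta_{p^e}^{j}$ with $(j,p^e)=1$ are associates in $\Z[\zeta_{p^e}]$: indeed $\dfrac{1-\zeta^{j}}{1-\zeta}=1+\zeta+\cdots+\zeta^{j-1}\in\Z[\zeta]$ where $\zeta=\zeta_{p^e}$, and symmetrically $\dfrac{1-\zeta}{1-\zeta^{j}}\in\Z[\zeta]$ since $j$ is invertible modulo $p^e$, so the ratio is a unit. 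Consequently, taking the product over the $\phi(p^e)$ values of $j$,
\[
p=\prod_{\substack{j=1,\dots,p^e\\(j,p^e)=1}}(1-\zeta^{j})=(1-\zeta)^{\phi(p^e)}\cdot u
\]
for a unit $u$, which shows $p\Z[\zeta_{p^e}]=(1-\zeta_{p^e})^{\phi(p^e)}$. Finally, since $\Z[\zeta_{p^e}]$ is the ring of integers of $\Q(\zeta_{p^e})$, a Dedekind domain in which $[\,\Q(\zeta_{p^e}):\Q\,]=\phi(p^e)$, the factorization of the single prime $p$ into $\phi(p^e)$ factors (counted with multiplicity) forces $(1-\zeta_{p^e})$ to be a prime ideal, necessarily the unique prime lying over $(p)$, and $p$ to be totally ramified.

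\medskip

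The only genuinely delicate point is the combinatorial bookkeeping in Part (1): one must make sure the telescoping product over prime-power divisors is correctly accounted for before concluding that all non-prime-power factors are $1$. Everything else is formal once the elementary identity $x^n-1=\prod_{d\mid n}\Phi_d(x)$ and the standard fact that $\Z[\zeta_n]$ is the full ring of integers of $\Q(\zeta_n)$ are taken as known. Alternatively, Part (1) can be deduced directly from Part (2) via $n=\prod_{d\mid n}\Phi_d(1)$ without a separate induction, observing that the prime-power divisors alone already multiply out to $n$; I would likely present it this way to keep the proof compact.
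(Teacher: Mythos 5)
Your proof is correct. Note, though, that the paper does not actually prove this lemma: it simply cites \cite[Lemma 2.8]{washington} for part (1) and \cite[IV, 1, Thm 1]{Lang94ANT} for part (2). Your self-contained argument is exactly the standard one underlying those references, so in substance you are reproducing the cited proofs rather than taking a new route. The only step worth making fully explicit is the positivity claim in part (1): to conclude from $n=\prod_{d\mid n,\,d>1}\Phi_d(1)$ and $\prod_{p\mid n}p^{v_p(n)}=n$ that every non-prime-power factor $\Phi_d(1)$ equals $1$, you need each such $\Phi_d(1)$ to be an integer $\ge 1$; this follows because for $d>2$ the primitive $d$-th roots of unity are non-real and pair off into complex conjugates, so $\Phi_d(1)=\prod_{(j,d)=1}(1-\zeta_d^j)$ is a product of positive real numbers $\lvert 1-\zeta_d^j\rvert^2$ and is a nonzero rational integer. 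With that observation in place, the telescoping bookkeeping, the associate argument $\bigl(1-\zeta^j\bigr)/\bigl(1-\zeta\bigr)\in\Z[\zeta]^*$ for $(j,p^e)=1$, and the ramification count $\sum e_if_i=\phi(p^e)$ forcing $(1-\zeta_{p^e})$ to be the unique prime over $p$ are all sound, granting (as the paper does) that $\Z[\zeta_n]$ is the full ring of integers of $\Q(\zeta_n)$.
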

   \begin{proof}
   For part (1) see \cite[Lemma 2.8]{washington}. 
   For part (2) see \cite[IV, 1, Thm 1]{Lang94ANT}.
\end{proof}
\begin{lemma}
\label{Psin1}
{\sl
Let $l>1$ and let $\Psi_{n,l}(x)$ denote the minimal polynomial of $\zeta_n$ over  $K=\Q(\zeta_l)$. 
\begin{enumerate}
    \item Suppose that $n$ has at least two distinct prime factors. Then  the algebraic integer $\Psi_{n,l}(1)$ is a unit.
   \item If $n=p^a$, where $p$ is  a prime and $a>0$, and $l=l_1p^b$, with $(l_1, p)=1$ and $0\le b\le a$, then $\Psi_{p^a, l}=\Psi_{p^a, p^b}$ and $\Psi_{p^a, l}(1)$
is a generator of the prime ideal of $\Z[\zeta_{p^b}]$ lying over $(p)$. 
 \end{enumerate}
}
\end{lemma}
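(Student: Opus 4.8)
The plan is to work out explicitly the $K$-conjugates of $\zeta_n$ and then evaluate the resulting product at $1$, reducing everything to Lemma~\ref{lemmawashington}. Put $m=\lcm(n,l)$, so that $K(\zeta_n)=\Q(\zeta_m)$, and identify $\Gal(\Q(\zeta_m)/\Q)$ with $(\Z/m\Z)^*$; then $G:=\Gal(\Q(\zeta_m)/K)$ is the subgroup of classes $\equiv 1\pmod l$. An element of $G$ fixing $\zeta_n$ would fix $K(\zeta_n)=\Q(\zeta_m)$, so $\sigma\mapsto\sigma(\zeta_n)$ is injective on $G$ and
$$\Psi_{n,l}(x)=\prod_{\sigma\in G}\bigl(x-\sigma(\zeta_n)\bigr),\qquad \Psi_{n,l}(1)=\prod_{\sigma\in G}\bigl(1-\sigma(\zeta_n)\bigr)\in\ok=\Z[\zeta_l].$$
Each $\sigma(\zeta_n)$ is a \emph{primitive} $n$-th root of unity, since $\sigma$ raises $\zeta_n$ to a power coprime to $m$, hence to $n$.

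For part (1), assume $n$ has at least two distinct prime factors. Applying Lemma~\ref{lemmawashington}(1) after a Galois automorphism of $\Q(\zeta_n)$ sending $\zeta_n$ to $\sigma(\zeta_n)$, every factor $1-\sigma(\zeta_n)$ is a unit of $\Z[\zeta_n]$, hence of $\Z[\zeta_m]$; thus $\Psi_{n,l}(1)$ is a unit of $\Z[\zeta_m]$. Since $\Psi_{n,l}(1)$ lies in the field $K$, its inverse is an algebraic integer lying in $K$, i.e.\ in $\Z[\zeta_l]$, and therefore $\Psi_{n,l}(1)\in\Z[\zeta_l]^*$.

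For part (2), write $n=p^a$ and $l=l_1p^b$ with $(l_1,p)=1$. First I would prove $\Psi_{p^a,l}=\Psi_{p^a,p^b}$: since $\Q(\zeta_{p^a})/\Q$ is abelian,
$$[\Q(\zeta_{p^a})\cdot\Q(\zeta_l):\Q(\zeta_l)]=[\Q(\zeta_{l_1p^a}):\Q(\zeta_{l_1p^b})]=\frac{\phi(p^a)}{\phi(p^b)}=[\Q(\zeta_{p^a}):\Q(\zeta_{p^b})],$$
so $\Psi_{p^a,l}$ and $\Psi_{p^a,p^b}$ have the same degree; as $\Psi_{p^a,p^b}$ is monic with coefficients in $\Z[\zeta_{p^b}]\subseteq\Z[\zeta_l]$ and annihilates $\zeta_{p^a}$, it equals the minimal polynomial $\Psi_{p^a,l}$. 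Now $m=\lcm(p^a,l)=l_1p^a$, and under the above description $G$ corresponds to the $e:=\phi(p^a)/\phi(p^b)$ residues $c\bmod p^a$ with $(c,p)=1$ and $c\equiv1\pmod{p^b}$. By Lemma~\ref{lemmawashington}(2), together with a Galois automorphism of $\Q(\zeta_{p^a})/\Q$, each factor $1-\zeta_{p^a}^{c}$ (with $(c,p)=1$) generates the unique prime $\kP$ of $\Z[\zeta_{p^a}]$ above $p$, whence $\Psi_{p^a,p^b}(1)\,\Z[\zeta_{p^a}]=\kP^{e}$. Since $p$ is totally ramified in $\Q(\zeta_{p^a})/\Q$ and in $\Q(\zeta_{p^b})/\Q$, the unique prime $\kp$ of $\Z[\zeta_{p^b}]$ above $p$ satisfies $\kp\,\Z[\zeta_{p^a}]=\kP^{e}$ as well; comparing divisors (or using that extension-then-contraction of ideals is the identity in a Dedekind extension) then gives $\Psi_{p^a,p^b}(1)\,\Z[\zeta_{p^b}]=\kp$, as claimed. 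The degenerate case $b=0$, where $\Z[\zeta_{p^b}]=\Z$, is precisely Lemma~\ref{lemmawashington}(2).

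The Galois bookkeeping (the description of $G$ and the counting of the residues $c$) and the arithmetic of the elements $1-\zeta_{p^a}^c$ are routine; the one point that needs a little care is the final descent in part (2), passing from the ideal $\kP^{e}=\Psi_{p^a,p^b}(1)\,\Z[\zeta_{p^a}]$ back down to $\Z[\zeta_{p^b}]$. This is handled cleanly by the valuation computation $v_\kp\bigl(\Psi_{p^a,p^b}(1)\bigr)=\frac{1}{e}\,v_\kP\bigl(\Psi_{p^a,p^b}(1)\bigr)=1$, together with the observation that the divisor of $\Psi_{p^a,p^b}(1)$ is supported only above $p$.
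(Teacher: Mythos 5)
Your proof is correct, but it takes a genuinely different route from the paper's, which is much shorter. For part (1) the paper simply observes that $\Psi_{n,l}(x)$ divides $\Phi_n(x)$ in $\ok[x]$ (both are monic, and the minimal polynomial over $K$ divides the one over $\Q$), so $\Psi_{n,l}(1)$ divides $\Phi_n(1)=1$ and is therefore a unit; as the paper notes, this argument works for an arbitrary number field $K$, whereas your expansion of $\Psi_{n,l}(1)$ as $\prod_{\sigma\in G}(1-\sigma(\zeta_n))$ leans on the explicit cyclotomic Galois theory. For part (2) the paper avoids all valuation bookkeeping by identifying the polynomial explicitly: $\Psi_{p^a,l}(x)=\Psi_{p^a,p^b}(x)=x^{p^{a-b}}-\zeta_{p^b}$ for $b>0$ (divisibility is clear and equality follows from the same degree count you use), so that $\Psi_{p^a,l}(1)=1-\zeta_{p^b}$ is \emph{on the nose} the standard generator of the prime above $p$, with the case $b=0$ reducing to $\Phi_{p^a}(1)=p$. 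Your computation of the ideal $\kP^{e}$ in $\Z[\zeta_{p^a}]$ followed by the descent $v_{\kp}(\Psi_{p^a,p^b}(1))=\frac{1}{e}v_{\kP}(\Psi_{p^a,p^b}(1))=1$ is sound and correctly handles the only delicate step (that the divisor is supported above $p$), but it only pins down the ideal generated by $\Psi_{p^a,l}(1)$, while the paper's explicit formula gives the actual element; both suffice for the statement as formulated.
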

\begin{proof}
$\Psi_{n,l}(x)$ divides $\Phi_n(x)$, hence $\Psi_{n,l}(1)$ is a unit since it divides the unit $\Phi_n(1)$ (this actually holds for any number field $K$).

For part $(2)$ note that $\Psi_{p^a, l}(x)=\Phi_{p^a}$ if $b=0$ and $\Psi_{p^a, l}(x)=\Psi_{p^a, p^b}(x)=x^{p^{a-b}}-\zeta_{p^b}$ if $b>0$ (a divisibility relation is obvious and equality follows from a degree argument). It follows that $\Psi_{p^a, l}(1)$ is equal to $p$ or $1-\zeta_{p^b}$ according to $b=0$ or $b>0$, namely it is a generator for the prime ideal of $\Z[\zeta_{p^b}]$ lying over $(p)$.
\end{proof}

\begin{lemma}
\label{phinzetam}
{\sl
Let $n>m\ge1$. The algebraic integer $\Phi_n(\zeta_m)$ is a unit in $\Z[\zeta_m]$  if $n/m$ is not a prime power. 

In the case when  $n/m=p^a$ for a prime $p$ and an integer $a>0$, then $\Phi_n(\zeta_m)$ is associated to $p$. 
}
\end{lemma}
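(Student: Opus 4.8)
The plan is to reduce the statement to the two cases already handled in Lemmas \ref{lemmawashington} and \ref{Psin1}, via the factorization of $\Phi_n$ over the subfield $\Q(\zeta_m)$. Write $n = m\cdot t$ with $t>1$. First I would recall the standard factorization: over $K=\Q(\zeta_m)$ the cyclotomic polynomial $\Phi_n(x)$ splits as a product of the $[\Q(\zeta_n):\Q(\zeta_m)] = \phi(n)/\phi(\mathrm{lcm}(m,n)) = \phi(n)/\phi(n)$ — more precisely, since $m\mid n$ we have $\Q(\zeta_m)\subseteq\Q(\zeta_n)$ and $\Phi_n(x) = \prod_{\sigma} \Psi_{n,m}^{\sigma}(x)$, where $\sigma$ runs over the cosets $\Gal(\Q(\zeta_m)/\Q)$ and $\Psi_{n,m}(x)$ is the minimal polynomial of $\zeta_n$ over $\Q(\zeta_m)$ from Lemma \ref{Psin1}. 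Evaluating at $x=\zeta_m$ is awkward directly, so instead I would pass to a cleaner substitution: $\Phi_n(\zeta_m)$ is, up to sign and a unit, the resultant-type quantity $\prod_{(j,n)=1}(\zeta_m - \zeta_n^j)$, and I would relate this to $\prod (1 - \zeta_n^j/\zeta_m^{?})$ — but the most efficient route is the following observation.

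Here is the key step I would carry out. Since $m\mid n$, the element $\zeta_m$ is an $n$-th root of unity, namely $\zeta_m = \zeta_n^{n/m} = \zeta_n^{t}$. Thus $\Phi_n(\zeta_m) = \Phi_n(\zeta_n^{t})$. Now $\zeta_n^t$ is a primitive $(n/\gcd(n,t))$-th root of unity $= \zeta_m$ (consistent), and I want to understand $\Phi_n$ evaluated at another root of unity. The clean statement is: for $\zeta$ a primitive $m$-th root of unity with $m<n$,
$$\Phi_n(\zeta) = \prod_{\substack{1\le j\le n\\ (j,n)=1}} (\zeta - \zeta_n^j),$$
and each factor $\zeta - \zeta_n^j = \zeta(1 - \zeta_n^j\zeta^{-1})$. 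Since $\zeta = \zeta_n^t$ for a suitable representative, $\zeta_n^j\zeta^{-1} = \zeta_n^{j-t}$ ranges over a set of $n$-th roots of unity, and I would group the product according to the exact order of $\zeta_n^{j-t}$. This reduces $\Phi_n(\zeta_m)$ to a product of terms $1-\xi$ where $\xi$ ranges over primitive $d$-th roots of unity for various $d\mid n$, i.e. to a product of values $\Phi_d(1)$ over those $d$ — but one must track multiplicities carefully. Cleanly: $\prod_{(j,n)=1}(X - \zeta_n^j) = \Phi_n(X)$, and specializing and using $\prod_{d\mid n}\Phi_d(X) = X^n-1$ together with Möbius inversion lets me express $\Phi_n(\zeta_m)$ through the already-known evaluations $\Phi_k(1)$ from Lemma \ref{lemmawashington}.

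The cleanest path, which I would actually write, is: use $\Psi_{n,m}(1)$. By Lemma \ref{Psin1}, $\Psi_{n,m}(1)$ is a unit when $n$ has two distinct prime factors after removing... — more precisely, apply Lemma \ref{Psin1} with the field $\Q(\zeta_m)$: if $n/m = t$ is not a prime power, then writing $n$ in terms of its prime factorization, $\zeta_n$ has two distinct prime factors "beyond" $m$, and a Galois-conjugation argument shows $\Phi_n(\zeta_m) = \prod_\sigma \sigma(\Psi_{n,m}(1))$ up to a unit (the conjugates of $\zeta_m$), hence is a unit. If $n/m = p^a$, then exactly the primes above $p$ enter, and Lemma \ref{lemmawashington}(2) / Lemma \ref{Psin1}(2) give that each conjugate contributes a generator of a prime over $p$, so the product is associate to a product of all primes of $\Z[\zeta_m]$ over $p$, which is $p$ up to a unit (since $p\Z[\zeta_m] = \prod \mathfrak{P}_i^{e}$ and taking one generator per prime with the right multiplicity recovers $p$).

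The main obstacle I anticipate is bookkeeping the ramification/multiplicity in the prime-power case: one must check that the product of the chosen generators $\Psi_{n,m}^{\sigma}(1)$, as $\sigma$ runs over $\Gal(\Q(\zeta_m)/\Q)$, is associate to $p$ exactly — not to a power of $p$ — which requires comparing $\phi(n)/\phi(m\cdot?) $ with the residue degree and ramification index of $p$ in $\Q(\zeta_m)$. This is a finite check using $p\Z[\zeta_m]=(1-\zeta_{p^b})^{\phi(p^b)}\cdot(\text{unramified part})$ from Lemma \ref{lemmawashington}(2), splitting $m = m_1 p^b$. The first part (non-prime-power case) should be routine given Lemma \ref{Psin1}(1) applied over $K = \Q(\zeta_m)$ together with the norm/conjugation argument.
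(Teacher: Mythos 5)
There is a genuine gap: the identity on which your ``cleanest path'' rests is false. You claim that $\Phi_n(\zeta_m)$ equals $\prod_{\sigma}\sigma\bigl(\Psi_{n,m}(1)\bigr)$ up to a unit. From $\Phi_n=\prod_\sigma\Psi_{n,m}^\sigma$ one only gets $\Phi_n(\zeta_m)=\prod_\sigma\Psi_{n,m}^\sigma(\zeta_m)=\prod_\sigma\sigma\bigl(\Psi_{n,m}(\zeta_m^{k_\sigma})\bigr)$ for suitable exponents $k_\sigma$: the evaluation point is a conjugate of $\zeta_m$, not $1$, and the discrepancy is not ``a unit''. Concretely, take $n=12$, $m=4$: then $\prod_\sigma\sigma(\Psi_{12,4}(1))=\Phi_{12}(1)=1$ is a unit (consistently with Lemma \ref{Psin1}(1), whose hypothesis concerns the prime factors of $n$, not of $n/m$), whereas $\Phi_{12}(i)=i^4-i^2+1=3$, which is associated to $3=n/m$ exactly as the lemma asserts. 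So your reduction would predict ``unit'' in a case where the correct answer is ``associate to $p$''; the two quantities genuinely diverge, and no bookkeeping of ramification indices can repair this.

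The computation you sketch earlier and then abandon is the correct one, and is essentially the paper's proof of the prime-power case: since $m\mid n$ one has $\zeta_m=\zeta_n^{t}$ with $t=n/m=p^a$, hence
\[
\Phi_n(\zeta_m)=\prod_{\substack{1\le j\le n\\ (j,n)=1}}\zeta_m\bigl(1-\zeta_n^{\,j-t}\bigr),
\]
and Lemma \ref{lemmawashington} identifies which factors fail to be units: writing $m=p^bm_1$ with $(m_1,p)=1$, the factor $1-\zeta_n^{\,j-t}$ is a non-unit precisely when $n/(n,j-t)$ is a power of $p$, i.e.\ when $j\equiv t\pmod{m_1}$ (a power of a prime $q\ne p$ is impossible because it would force $p\mid j$). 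The ``careful tracking of multiplicities'' you defer is a short count: there are exactly $\phi(p^{a+b})$ such $j$, and for each of them $\zeta_n^{\,j-t}$ has exact order $p^{a+b}$, so the non-unit factors together generate $(1-\zeta_{p^{a+b}})^{\phi(p^{a+b})}=(p)$. No norm or Galois-averaging step is needed. (For the non-prime-power case the paper simply cites the literature; the same product computation would also yield it, but the $\Psi_{n,m}(1)$ route does not.)
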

\begin{proof}
The first part of the proof is \cite[Corollary 8]{moree} (see also  \cite{apostol70}). 

For the second part, we note that 
$$\Phi_n(\zeta_m)= \prod_{\substack{j=1,\dots,n\\ (j,n)=1}}(\zeta_m-\zeta_n^j)=\prod_{\substack{j=1,\dots,n\\ (j,n)=1}}\zeta_m(1-\zeta_n^{j-p^a}).$$
From Lemma \ref{lemmawashington} we have that  $(1-\zeta_n^{j-p^a})$ is invertible if  $\frac{n}{(n, j-p^a)}$ is not a prime power. On the other hand,  $\frac{n}{(n, j-p^a)}$ is a prime power only if it is a power of $p$ and  $j\equiv p^a\pmod{m_1}$, where  $m=p^b m_1$ and $(m_1,p)=1$. Taking into account that $(j,n)=1$, an easy computation shows that there are $\phi(p^{a+b})$ values of $j$ with this property. For these values $1-\zeta_n^{j-p^a}$ is a generator of the ideal $(1-\zeta_{p^{a+b}})$, namely 
$$(\Phi_n(\zeta_m))=(1-\zeta_{p^{a+b}})^{\phi(p^{a+b})}=p\Z[\zeta_{p^{a+b}}].$$
\end{proof}

In Sections \ref{sec:torsion-free-prel} and \ref{sec:torfree} we will need to study
the ring 
$$\Z[x]/(\Phi_{m_1}(x)\dots\Phi_{m_r}(x))$$
 when $m_1, \dots, m_r$ are distinct positive integers. 
Denote by $\psi$ the CRT map, and, by abuse of notation, also its composition with the isomorphism given by 
 the identifications $\Z[x]/(\Phi_{m_i}(x))\cong\Z[\zeta_{m_i}]$, namely 
\begin{equation}\label{CRTmap}
\psi\colon \Z[x]/(\Phi_{m_1}(x)\cdots\Phi_{m_r}(x))\to\prod_{i=1}^r \Z[x]/(\Phi_{m_i}(x))\cong\prod_{i=1}^r \Z[\zeta_{m_i}].
\end{equation}
Then $\psi$ is always an injection and we ask when it is also surjective.


The following lemma gives the answer for $r=2$, in Proposition \ref{prop:cinese} we will give the general answer.
\begin{lemma}
\label{lemma:cycloPsi}
{\sl
Let $n>m{\ge}1$. The following are equivalent:
\begin{enumerate}
\item[i)] $\psi\colon\Z[x]/{(\Phi_m(x)\Phi_n(x))}\to\Z[\zeta_m]\times\Z[\zeta_n ]$ is an isomorphism.
\item[ii)] $(\Phi_m(x), \Phi_n(x))=\Z[x]$;
\item[iii)] $\Phi_n(\zeta_m)$ is invertible;
\item[iv)] $n/m$ is not a prime power.
\end{enumerate} 
}
\end{lemma}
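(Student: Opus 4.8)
The plan is to prove the cycle of implications $\text{i)} \Rightarrow \text{ii)} \Rightarrow \text{iii)} \Rightarrow \text{iv)} \Rightarrow \text{ii)}$, together with the easy $\text{ii)} \Rightarrow \text{i)}$, so that all four are equivalent. The implication $\text{ii)} \Rightarrow \text{i)}$ is exactly the CRT statement recorded in Subsection~\ref{CRT}: since $\Phi_m(x)$ and $\Phi_n(x)$ are distinct irreducible polynomials in $\Z[x]$ they are coprime in $\Q[x]$, so $\psi$ is always injective, and it is an isomorphism precisely when $(\Phi_m(x),\Phi_n(x)) = \Z[x]$. Conversely $\text{i)} \Rightarrow \text{ii)}$: if $\psi$ is surjective then $1 \in \Z[x]/(\Phi_m\Phi_n)$ maps onto $(1,0)$ from some preimage, which unwinds to an identity $a(x)\Phi_m(x) + b(x)\Phi_n(x) = 1$ in $\Z[x]$ after a short computation, i.e.\ $(\Phi_m,\Phi_n) = \Z[x]$.

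For $\text{ii)} \Leftrightarrow \text{iii)}$ I would argue via the quotient ring $R := \Z[x]/(\Phi_m(x)) \cong \Z[\zeta_m]$. The ideal $(\Phi_m,\Phi_n)$ equals $\Z[x]$ if and only if its image in $R$, namely the ideal generated by $\Phi_n(\zeta_m)$, is the unit ideal of $\Z[\zeta_m]$; and since $\Z[\zeta_m]$ is a Dedekind (in fact, the full ring of integers) domain, the principal ideal $(\Phi_n(\zeta_m))$ is everything exactly when $\Phi_n(\zeta_m)$ is a unit. This gives $\text{ii)} \Leftrightarrow \text{iii)}$ cleanly.

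The equivalence $\text{iii)} \Leftrightarrow \text{iv)}$ is where the real content sits, but here I can invoke Lemma~\ref{phinzetam}, which already asserts precisely that $\Phi_n(\zeta_m)$ is a unit in $\Z[\zeta_m]$ when $n/m$ is not a prime power, and that $\Phi_n(\zeta_m)$ is associated to the prime $p$ when $n/m = p^a$. Since a nonzero integer $p$ is never a unit in $\Z[\zeta_m]$ (it lies in the proper ideal $(1-\zeta_{p^{a+b}})$ by the same lemma, or simply because $\Z[\zeta_m]/p\Z[\zeta_m] \neq 0$), the case $n/m = p^a$ forces $\Phi_n(\zeta_m)$ to be a non-unit. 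Thus $\Phi_n(\zeta_m)$ is invertible if and only if $n/m$ is not a prime power, closing the loop. The main obstacle in the chain is the arithmetic of $\Phi_n(\zeta_m)$ in $\text{iii)} \Leftrightarrow \text{iv)}$, and that has been isolated and dispatched in Lemma~\ref{phinzetam}; the rest is formal manipulation of the CRT map and the fact that $\Z[\zeta_m]$ is the ring of integers of $\Q(\zeta_m)$.
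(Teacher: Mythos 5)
Your proposal is correct and follows essentially the same route as the paper: (i)$\Leftrightarrow$(ii) by the CRT, (ii)$\Leftrightarrow$(iii) by passing to the quotient $\Z[x]/(\Phi_m(x))\cong\Z[\zeta_m]$ (the paper writes this as a chain of isomorphisms $\Z[x]/(\Phi_m,\Phi_n)\cong\Z[\zeta_m]/(\Phi_n(\zeta_m))$, which is the same observation), and (iii)$\Leftrightarrow$(iv) by citing Lemma~\ref{phinzetam}. The only cosmetic difference is your appeal to $\Z[\zeta_m]$ being Dedekind, which is unnecessary since a principal ideal is the unit ideal exactly when its generator is a unit in any commutative ring with identity.
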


\begin{proof}
(i) is equivalent to (ii) by the CRT.

The equivalence between (ii) and (iii) follows from the 
 following chain of isomorphisms 
$$\frac{\Z[x]}{(\Phi_m(x),\Phi_n(x))}\cong \frac{\Z[x]/(\Phi_m(x))}{(\Phi_m(x),\Phi_n(x))/(\Phi_m(x))}\cong\frac{\Z[\zeta_m]}{(\Phi_n(\zeta_m))}.$$
 Finally,
Lemma \ref{phinzetam} gives the equivalence between (iii) and (iv).
\end{proof}

\begin{prop}
\label{prop:cinese}
{\sl
Let $r\ge 2$ and $m_1<\cdots< m_r$ be distinct positive integers. The following are equivalent
\begin{itemize}
\item[a)] the CRT map \\
$\psi\colon  \Z[x]/(\Phi_{m_1}(x)\cdots\Phi_{m_r}(x))\to\prod_{i=1}^r \Z[x]/(\Phi_{m_i}(x))\cong\prod_{i=1}^r \Z[\zeta_{m_i}]$
is an isomorphism;
\item[b)] for all $1\le i<j\le r$ the ratio $m_j/m_i$ is not a prime power.
\end{itemize}
}
\end{prop}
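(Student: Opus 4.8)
The plan is to prove the proposition by induction on $r$, using Lemma~\ref{lemma:cycloPsi} as the base case $r=2$ and reducing the inductive step to a statement about pairwise coprimality of the relevant ideals in $\Z[x]$. Recall that for a finite list of ideals $I_1,\dots,I_r$ in a commutative ring, the CRT map is an isomorphism precisely when the ideals are pairwise comaximal, i.e. $I_i+I_j=R$ for all $i\ne j$; this is the standard CRT and it immediately reduces (a) to the condition that $(\Phi_{m_i}(x),\Phi_{m_j}(x))=\Z[x]$ for all $i<j$. By Lemma~\ref{lemma:cycloPsi}, each such pairwise comaximality is equivalent to $m_j/m_i$ not being a prime power (here one uses $m_i<m_j$, so the roles match the hypothesis $n>m$ of that lemma). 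Hence (a) $\iff$ (b) is really just the pairwise-comaximality form of CRT combined with the $r=2$ case already in hand.

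First I would state and invoke the general CRT in the pairwise-comaximal form: if $I_1,\dots,I_r\subseteq R$ are ideals with $I_i+I_j=R$ for all $i\ne j$, then $\bigcap_k I_k=\prod_k I_k$ and $R/\bigcap_k I_k\cong\prod_k R/I_k$; conversely, if the product map $R\to\prod_k R/I_k$ is surjective then the $I_k$ are pairwise comaximal (project onto two coordinates). Applying this with $R=\Z[x]$ and $I_i=(\Phi_{m_i}(x))$ — noting that the $\Phi_{m_i}$ are pairwise coprime as polynomials over $\Q$, so $\bigcap_i(\Phi_{m_i})=(\Phi_{m_1}\cdots\Phi_{m_r})$ already by unique factorization in $\Q[x]$ together with the fact that the product of monic integer polynomials with no common factor has the intersection of the principal ideals equal to the ideal of the product — gives that $\psi$ is an isomorphism iff $(\Phi_{m_i}(x),\Phi_{m_j}(x))=\Z[x]$ for all $i<j$.

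Then the proof concludes by quoting Lemma~\ref{lemma:cycloPsi}, specifically the equivalence of (ii) and (iv) there: $(\Phi_{m_i}(x),\Phi_{m_j}(x))=\Z[x]$ $\iff$ $m_j/m_i$ is not a prime power. Combining, $\psi$ is an isomorphism $\iff$ for every pair $i<j$ the ratio $m_j/m_i$ is not a prime power, which is exactly (b). I would present this as a short clean argument rather than an induction, since the pairwise-comaximal formulation of CRT makes the induction unnecessary; if one prefers an inductive exposition, the step peeling off $\Phi_{m_r}$ requires knowing $(\Phi_{m_1}\cdots\Phi_{m_{r-1}},\Phi_{m_r})=\Z[x]$, which follows from $(\Phi_{m_i},\Phi_{m_r})=\Z[x]$ for each $i<r$ by multiplying the comaximality relations.

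The main (and essentially only) subtlety is the identification $\bigcap_i(\Phi_{m_i}(x))=(\Phi_{m_1}(x)\cdots\Phi_{m_r}(x))$ in $\Z[x]$: over $\Q[x]$ this is clear since distinct cyclotomic polynomials are coprime and $\Q[x]$ is a PID, but over $\Z[x]$ one must check there is no extra content. This is handled by the same Gauss-lemma reasoning already used implicitly in the text right before Lemma~\ref{lemma:cycloPsi} (where it is observed that for coprime $f,g$ one has $(f)\cap(g)=(fg)$ in $\Z[x]$), iterated $r-1$ times; since $\Phi_{m_i}\Phi_{m_j}$ and $\Phi_{m_k}$ are again coprime in $\Q[x]$ and all polynomials are monic, each step preserves the equality. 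No genuinely new difficulty arises beyond $r=2$, so I expect the write-up to be brief.
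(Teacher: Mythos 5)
Your proof is correct, but it takes a different (and shorter) route than the paper. The paper proves the equivalence by induction on $r$: it introduces the intermediate CRT maps $\psi_t$ and $\rho_t$, uses the factorization $\psi_t=(\psi_{t-1}\times id)\circ\rho_t$ to peel off the last cyclotomic factor, and in the forward direction deduces from the surjectivity of $\rho_r$ that $(\Phi_{m_1}\cdots\Phi_{m_{r-1}},\Phi_{m_r})=\Z[x]$, hence $(\Phi_{m_i},\Phi_{m_r})=\Z[x]$ for each $i<r$; in the converse direction it multiplies the pairwise comaximality relations to reassemble $(\Phi_{m_1}\cdots\Phi_{m_{r-1}},\Phi_{m_r})=\Z[x]$. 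You instead invoke the general $n$-ideal form of the CRT once: the map $\Z[x]\to\prod_i\Z[x]/(\Phi_{m_i})$ is surjective if and only if the ideals $(\Phi_{m_i})$ are pairwise comaximal (the converse following by projecting onto two coordinates), and then each pairwise condition is converted by Lemma~\ref{lemma:cycloPsi} into the arithmetic condition on $m_j/m_i$. Both arguments rest on exactly the same key lemma and on the identification $\bigcap_i(\Phi_{m_i})=(\Phi_{m_1}\cdots\Phi_{m_r})$ in $\Z[x]$, which you rightly flag as the only point needing care (and which holds since the $\Phi_{m_i}$ are distinct monic irreducibles in the UFD $\Z[x]$). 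What your version buys is brevity and the elimination of the inductive bookkeeping; what the paper's version buys is that it only ever uses the two-ideal CRT already set up in Section~\ref{CRT}, never appealing to the general pairwise-comaximal statement for $n$ ideals. Either write-up is acceptable.
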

\begin{proof}
For each $t$ with $2\le t\le r$, consider the  CRT maps
$$
\psi_t\colon \Z[x]/(\Phi_{m_1}(x)\cdots\Phi_{m_t}(x))\to\prod_{i=1}^t \Z[x]/(\Phi_{m_i}(x))
$$
and 
$$
\rho_t\colon\Z[x]/(\Phi_{m_1}(x)\cdots\Phi_{m_t}(x))\to\Z[x]/(\Phi_{m_1}(x)\cdots\Phi_{m_{t-1}}(x))\times \Z[x]/(\Phi_{m_t}(x)).
$$
With this notation,  we have the following commutative diagram
\begin{equation}
\label{commdiag}
\begin{tikzcd}
\Z[x]/( \prod\limits_{i=1}^t\Phi_{m_i}(x))\arrow[d, "\rho_t"]\arrow[r, "\psi_t"]& \prod\limits_{i=1}^t{\Z[x]}/{(\Phi_{m_i}(x))} \\
{\Z[x]}/{( \prod\limits_{i=1}^{t-1}\Phi_{m_i}(x))}\times{\Z[x]}/{(\Phi_{m_t}(x))}\arrow[ur , "\psi_{t-1}\times id"] 
 & 
\end{tikzcd}
\end{equation}
namely, 
\begin{equation}
\label{eq:psit}
\psi_t=(\psi_{t-1}\times id)\circ\rho_t.
\end{equation}

We will prove that (a) is equivalent to (b) by induction on $r$.
 
 For $r=2$ the equivalence is given in Lemma  \ref{lemma:cycloPsi}. We now assume that $r>2$ and that  the equivalence holds in the case of $r-1$ integers $m_1<\cdots<m_{r-1}$ and we prove it for $m_1<\cdots<m_{r-1}<m_r.$
 
 Assume (a), so $\psi=\psi_r$ is an isomorphism. From equation \eqref{eq:psit}
 we get that $\psi_{r-1}\times id$ is surjective; this ensures that also $\psi_{r-1}$ is surjective and hence it is an isomorphism since it is always injective.  Therefore, by inductive hypothesis we get that $m_j/m_i$ is not a prime power for $1\le i<j\le r-1$ and we are left to prove that $m_r/m_i$ is not a prime power for $1\le i<r$.
 We note that  since both $\psi_r$ and $\psi_{r-1}$ are isomorphisms, equation \eqref{eq:psit} ensures that also the CRT map $\rho_r$ is an isomorphism
 so $(\Phi_{m_1}(x)\cdots\Phi_{m_{r-1}}(x), \Phi_{m_r}(x))=\Z[x]$, which in turns implies  $(\Phi_{m_i}(x), \Phi_{m_r}(x))=\Z[x]$ for each $1\le i<r$. By Lemma \ref{lemma:cycloPsi}  the last condition ensures that, for each $i$, the ratio $m_r/m_i$ is not a prime power, proving  (b).

%
 Conversely, assume that (b) holds, then by applying the inductive hypothesis to $m_1<\cdots<m_{r-1}$ we get that $\psi_{r-1}$ is an isomorphism. On the other  hand, since   
$m_r/m_i$ is not a prime power, by Lemma  \ref{lemma:cycloPsi}
 we get
 $(\Phi_{m_i}(x),\Phi_{m_r}(x))=\Z[x]$ for all $i=1,\dots, r-1$, so there exist $a_i(x),b_i(x)\in \Z[x]$ such that 
 $$a_i(x)\Phi_{m_i}(x)+b_i(x)\Phi_{m_r}(x)=1.$$
 Multiplying the $r-1$ equations we  get 
  $$a(x)\Phi_{m_1}(x)\cdots\Phi_{m_{r-1}}(x)+b(x)\Phi_{m_r}(x)=1,$$
  for some $a(x),b(x)\in\Z[x]$, or equivalently
  $$(\Phi_{m_1}(x)\dots\Phi_{m_{r-1}}(x), \Phi_{m_r}(x))=\Z[x].$$
This ensures that  the CRT map 
  $$ \rho_r\colon \Z[x]/(\Phi_{m_1}(x)\dots\Phi_{m_r}(x))\cong\Z[x]/(\Phi_{m_1}(x)\dots\Phi_{m_{r-1}}(x))\times \Z[x]/(\Phi_{m_r}(x))$$
  is an isomorphism and  using equation \eqref{eq:psit} we can conclude that also $\psi_r$ is an isomorphism, proving (a).
     \end{proof}

We conclude this section with an arithmetical lemma which will be useful in Proposition \ref{gT}.
\begin{lemma}
\label{disuguaglianze}
{\sl
Let $q_1,\dots, q_k$ be pairwise distinct odd primes and let $ e_1,\dots e_k>0$. Then, for $\delta\ge1$
\begin{equation}
 \label{eq-rank3}\frac{\phi(2^\delta q_1^{e_1}\cdots q_k^{e_k})}2-1\ge \sum_{i=1}^k(\frac{\phi(2^{\delta} q_i^{e_i})}2-1)
 \end{equation}
and, if $\delta\ge2$,
 \begin{equation}
 \label{eq-rank2}\frac{\phi(2^\delta q_1^{e_1}\cdots q_k^{e_k})}2-1\ge \sum_{i=1}^k(\frac{\phi(2^{\delta-1} q_i^{e_i})}2-1)+\frac{\phi(2^{\delta})}2-1.
 \end{equation}
}
\end{lemma}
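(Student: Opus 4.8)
The plan is to reduce everything to the multiplicativity of the Euler function and to induction on $k$, isolating the single ``non-multiplicative'' phenomenon that comes from the factor $2^\delta$. Write $N=q_1^{e_1}\cdots q_k^{e_k}$ and recall $\phi(2^\delta)=2^{\delta-1}$ for $\delta\ge1$. Since the $q_i$ are odd and pairwise distinct, $\phi$ is multiplicative on all the products appearing, so $\phi(2^\delta N)=2^{\delta-1}\phi(N)$ and $\phi(2^\delta q_i^{e_i})=2^{\delta-1}\phi(q_i^{e_i})$. Thus both inequalities are, after clearing the common factor $2^{\delta-1}$ as far as possible, statements about how $\tfrac12\phi(N)$ compares with $\sum_i\tfrac12\phi(q_i^{e_i})$ together with bookkeeping of the $-1$ terms; the point is that $\prod_i\phi(q_i^{e_i})$ dominates $\sum_i\phi(q_i^{e_i})$ once there is more than one factor, because each $\phi(q_i^{e_i})\ge q_i-1\ge2$.

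For \eqref{eq-rank3}, I would argue by induction on $k$. For $k=1$ it is an equality. For the inductive step set $N'=q_1^{e_1}\cdots q_{k-1}^{e_{k-1}}$ and $c=\phi(q_k^{e_k})\ge2$; then $\phi(2^\delta N)=c\,\phi(2^\delta N')$, while $\tfrac{\phi(2^\delta N)}2-1=c\bigl(\tfrac{\phi(2^\delta N')}2-1\bigr)+(c-1)$, and the inductive hypothesis bounds $\tfrac{\phi(2^\delta N')}2-1\ge\sum_{i<k}\bigl(\tfrac{\phi(2^\delta q_i^{e_i})}2-1\bigr)$. Since $c\ge2\ge1$ and the right-hand side of the hypothesis is nonnegative (each summand is $\ge\tfrac{\phi(2^\delta q_i)}2-1\ge0$), multiplying by $c$ only helps; it then remains to check $(c-1)\ge\tfrac{\phi(2^\delta q_k^{e_k})}2-1=\tfrac{2^{\delta-1}c}{2}-1$, i.e. $c-1\ge2^{\delta-2}c-1$, which holds for $\delta\ge1$ because then $2^{\delta-2}\le 1$ would fail only at $\delta=1$ where $2^{\delta-2}=\tfrac12$, and for $\delta\ge2$ one has $2^{\delta-2}c\ge c$, so one must instead absorb the excess using the slack already produced by the inductive factor — i.e. compare $c\bigl(\tfrac{\phi(2^\delta N')}2-1\bigr)+(c-1)$ against $\sum_{i<k}\bigl(\tfrac{\phi(2^\delta q_i^{e_i})}2-1\bigr)+\bigl(\tfrac{\phi(2^\delta q_k^{e_k})}2-1\bigr)$ termwise, using that $c$ times a sum of $k-1$ terms each at least $\tfrac{\phi(2^\delta q_i)}2-1$ exceeds that sum by enough to cover $\tfrac{\phi(2^\delta q_k^{e_k})}2-1-(c-1)$. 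Carrying this out cleanly is the one place requiring care.

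For \eqref{eq-rank2}, with $\delta\ge2$, the right-hand side replaces $2^\delta$ by $2^{\delta-1}$ inside the $k$ product-factors but adds back a term $\tfrac{\phi(2^\delta)}2-1=2^{\delta-2}-1$. Since $\phi(2^{\delta-1}q_i^{e_i})=\tfrac12\phi(2^\delta q_i^{e_i})$, the new sum is $\sum_i\bigl(\tfrac{\phi(2^\delta q_i^{e_i})}4-1\bigr)+2^{\delta-2}-1$, which is no larger than $\sum_i\bigl(\tfrac{\phi(2^\delta q_i^{e_i})}2-1\bigr)+\bigl(\tfrac{\phi(2^\delta)}2-1\bigr)$ — indeed term by term $\tfrac{\phi(2^\delta q_i^{e_i})}4-1\le\tfrac{\phi(2^\delta q_i^{e_i})}2-1$. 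So \eqref{eq-rank2} will follow once one knows $\tfrac{\phi(2^\delta N)}2-1\ge\sum_i\bigl(\tfrac{\phi(2^\delta q_i^{e_i})}2-1\bigr)+\bigl(2^{\delta-2}-1\bigr)$; but for $\delta\ge2$ one has the stronger \eqref{eq-rank3} plus the observation that $\phi(2^\delta N)\ge\phi(2^\delta q_1^{e_1})\cdot\prod_{i\ge2}\phi(q_i^{e_i})$ leaves a multiplicative surplus $\ge 2^{\delta-1}$ in the $k=1$ case already, which dominates $2^{\delta-2}-1$. Concretely I would run the same induction on $k$ as above and, in the base case $k=1$, verify $\tfrac{\phi(2^\delta q_1^{e_1})}2-1\ge\bigl(\tfrac{\phi(2^{\delta-1}q_1^{e_1})}2-1\bigr)+\bigl(2^{\delta-2}-1\bigr)$, i.e. $\tfrac{\phi(2^\delta q_1^{e_1})}4\ge 2^{\delta-2}$, i.e. $\phi(q_1^{e_1})\ge2$, which is clear.

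The expected main obstacle is purely the $-1$ bookkeeping in the inductive step of \eqref{eq-rank3}: the multiplicativity of $\phi$ makes the ``product $\ge$ sum'' heart of the matter almost automatic once $k\ge2$, but tracking the constants so that the argument also covers the small cases ($\delta=1$, or a single factor with $q_1=3$, $e_1=1$, where several quantities collapse to small integers) needs to be done explicitly rather than waved through.
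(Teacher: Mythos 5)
Your overall strategy (multiplicativity of $\phi$ plus ``product beats sum'') is the right one, and your reduction of \eqref{eq-rank2} to a strengthened form of \eqref{eq-rank3} can be made to work. But your proof of \eqref{eq-rank3} has a genuine gap exactly where you flag it. In the inductive step you must show
\[
c\Bigl(\tfrac{\phi(2^\delta N')}2-1\Bigr)+(c-1)\ \ge\ \sum_{i<k}\Bigl(\tfrac{\phi(2^\delta q_i^{e_i})}2-1\Bigr)+\Bigl(\tfrac{\phi(2^\delta q_k^{e_k})}2-1\Bigr),
\]
you correctly observe that the naive comparison $(c-1)\ge 2^{\delta-2}c-1$ fails for $\delta\ge3$, and you then defer the real argument (``carrying this out cleanly is the one place requiring care''). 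As written this is not a proof of the decisive step. It can be closed: setting $S=\sum_{i<k}(\tfrac{\phi(2^\delta q_i^{e_i})}2-1)$, the inequality reduces to $(c-1)S+c\ge 2^{\delta-2}c$; for $k\ge2$ each summand of $S$ is at least $2^{\delta-1}-1$, and $(c-1)(2^{\delta-1}-1)+c\ge 2^{\delta-2}c$ follows from $c\ge2$ since it is equivalent to $2^{\delta-2}c\ge 2^{\delta-1}-1$. You need to write this (or something equivalent) down; without it the induction does not close for $\delta\ge3$.

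The paper avoids the induction entirely, and you should note how much simpler that is. From $mn\ge m+n$ for $m,n\ge2$ one gets in one stroke $\prod_{i}\phi(q_i^{e_i})\ge\sum_i\phi(q_i^{e_i})$, hence $\phi(2^\delta q_1^{e_1}\cdots q_k^{e_k})\ge\sum_i\phi(2^\delta q_i^{e_i})$ after multiplying by $2^{\delta-1}$; dividing by $2$, inequality \eqref{eq-rank3} follows at once because the right-hand side carries $k\ge1$ subtracted ones against a single one on the left. For \eqref{eq-rank2} the paper writes $\phi(2^\delta q_1^{e_1}\cdots q_k^{e_k})=\phi(2^{\delta-1}q_1^{e_1}\cdots q_k^{e_k})+\phi(2^{\delta-1}q_1^{e_1}\cdots q_k^{e_k})$, applies the same product-versus-sum bound to both copies, and estimates one copy below by $k\phi(2^\delta)$ using $\phi(2^{\delta-1}q_i^{e_i})\ge\phi(2^\delta)$; the conclusion is then immediate. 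I recommend replacing your induction by this direct argument; if you keep the induction, the displayed inequality above must be proved, not asserted.
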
 
\begin{proof}
Both inequalities are trivial for $k=0$, so let $k\ge1$. 
Since $\phi(q_i^{e_i})\ge2$   for all $i$,  the obvious relation $mn\ge m+n$ for all $m,n\ge2$, gives
  $$\phi(q_1^{e_1}\cdots q_k^{e_k})= \prod_{i=1}^k\phi( q_i^{e_i})\ge \sum_{i=1}^k\phi( q_i^{e_i}),
$$ 
from which we get
\begin{equation}
 \label{eq-rank4}\phi(2^\delta q_1^{e_1}\!\cdots \!q_k^{e_k})=2^{\delta-1}\phi( q_1^{e_1}\!\cdots\! q_k^{e_k})\ge \sum_{i=1}^k\!2^{\delta-1}\phi( q_i^{e_i})=\sum_{i=1}^k\!\phi(2^{\delta} q_i^{e_i})
  \end{equation} 
 and \eqref{eq-rank3} follows.
 
On the other hand, if $\delta\ge2$
%
$$\phi(2^\delta q_1^{e_1}\cdots q_k^{e_k}) =\phi(2^{\delta-1} q_1^{e_1}\cdots q_k^{e_k})+\phi(2^{\delta-1} q_1^{e_1}\cdots q_k^{e_k});$$
 using \eqref{eq-rank4} on both summands and then  the trivial estimate $\phi(2^{\delta-1} q_i^{e_i})\ge\phi(2^\delta)$, we get
$$\phi(2^\delta q_1^{e_1}\cdots q_k^{e_k})
 \ge\sum_{i=1}^k\phi(2^{{\delta-1}}q_i^{e_i})+ \sum_{i=1}^k\phi(2^{{\delta-1}}q_i^{e_i})\ge \sum_{i=1}^k\phi(2^{{\delta-1}}q_i^{e_i})+k\phi(2^\delta)$$
 and \eqref{eq-rank2} follows a fortiori.
\end{proof}

\section{Integral domains}
\label{integral-domains}
In this section we characterize the finitely generated groups which occur as group of units of an integral domain of any characteristic and in Proposition \ref{domini-int} those which are the group of units of integral extensions of $\Z$.

\begin{theo}
\label{domini-char0}
{\sl The  finitely generated abelian groups that occur as groups of units of integral domains of characteristic zero  are the groups of the form $C_{2n}\times\Z^g$, with $n\in\N$, $g\ge\frac{\phi(2n)}2-1$.
}
\end{theo}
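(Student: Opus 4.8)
The plan is to prove the two inclusions separately. For necessity I would first reduce, exactly as in Section~\ref{notation}, to the case $A=\Z[A^*]$, so that $A$ is a finitely generated commutative integral domain of characteristic zero with $A^*\cong (A^*)_{tors}\times\Z^{g}$. Setting $K=\mathrm{Frac}(A)$, the finite group $(A^*)_{tors}$ is a subgroup of $K^*$, hence cyclic; and since $-1\neq 1$ in characteristic zero it has even order, say $2n$, so $(A^*)_{tors}=\langle\xi\rangle=C_{2n}$ with $\xi$ a primitive $2n$-th root of unity. It then remains to bound $g$ from below and, conversely, to realize every such group.

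For the lower bound I would apply Lemma~\ref{minimum} to the subring $B=\Z[(A^*)_{tors}]=\Z[\xi]$: one has $B^*\cong (A^*)_{tors}\times\Z^{g_B}$ with $g_B\le g$. As $\xi$ is a root of the irreducible polynomial $\Phi_{2n}$, $B\cong\Z[x]/(\Phi_{2n}(x))\cong\Z[\zeta_{2n}]$, which is the full ring of integers of $\Q(\zeta_{2n})$. For $2n\ge 3$ this field is totally imaginary of degree $\phi(2n)$, so Dirichlet's Unit Theorem (Proposition~\ref{dirichlet}, cf.\ Section~\ref{subs:cyclo}) gives $g_B=\frac{\phi(2n)}{2}-1$, while for $2n=2$ one has $B=\Z$ and $g_B=0$; in both cases $g\ge g_B\ge\frac{\phi(2n)}{2}-1$ (for $2n=2$ this merely says $g\ge 0$).

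For realizability, fix $n\ge 1$ and an integer $g\ge\frac{\phi(2n)}{2}-1$ and put $k=g-(\frac{\phi(2n)}{2}-1)^*\ge 0$, with the truncated quantity as defined in Section~\ref{subs:cyclo}. Consider
$$A=\Z[\zeta_{2n}][x_1,x_1^{-1},\dots,x_k,x_k^{-1}],$$
the ring of Laurent polynomials in $k$ variables over $\Z[\zeta_{2n}]$; this is an integral domain of characteristic zero. The roots of unity of $\Z[\zeta_{2n}]$ are exactly the $2n$-th roots of unity (since $2n$ is even), so $\Z[\zeta_{2n}]^*\cong C_{2n}\times\Z^{(\frac{\phi(2n)}{2}-1)^*}$ by Dirichlet, and by the description of the unit group of a Laurent polynomial ring over a domain recalled in Section~\ref{notation} we get $A^*\cong\Z[\zeta_{2n}]^*\times\Z^{k}\cong C_{2n}\times\Z^{g}$, as wanted. (Alternatively one may use the ring of $S$-integers of $\Q(\zeta_{2n})$ for a suitable finite set $S$ of primes.)

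I do not expect a genuine obstacle: Dirichlet's Unit Theorem and the unit group of Laurent polynomial rings do all the work, and the reduction to $B$ avoids any worry about whether $A$ itself is integral over $\Z$. The only delicate point is the bookkeeping in the degenerate cases $2n\le 2$, where $\frac{\phi(2n)}{2}-1<0$ so the lower bound just reads $g\ge 0$; this is precisely why the number of Laurent variables in the construction is governed by the starred quantity $(\frac{\phi(2n)}{2}-1)^*$ rather than by $\frac{\phi(2n)}{2}-1$ itself.
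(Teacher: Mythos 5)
Your proof is correct and follows essentially the same route as the paper: reduce to $B=\Z[(A^*)_{tors}]$, note the torsion is cyclic of even order $2n$, apply Dirichlet's Unit Theorem, and realize all larger ranks with Laurent polynomial rings over $\Z[\zeta_{2n}]$. The only cosmetic difference is that you identify $B\cong\Z[\zeta_{2n}]$ outright and read off its rank, whereas the paper treats $B$ as an order in a number field $K\supseteq\Q(\zeta_{2n})$ and uses the divisibility $\tfrac{\phi(2n)}2\mid s$; both yield the same bound.
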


\begin{proof} 
Suppose $A$ is an integral domain of characteristic zero whose group of units $A^*$ is finitely generated, so that  $A^*\cong T\times\Z^{g_A}$  where $T$ denotes the (finite) torsion subgroup. Let $K$ be the quotient field of $A$, then $T$ is a  finite multiplicative subgroup of $K^*$, hence it is a cyclic group. 

As noted in Section  \ref{notation}, the ring  $B={\Z}[T]$ has group of units isomorphic to $T\times \Z^{g_B}$ with $g_B\le g_A$.
Hence, to prove that $A^*$ has the required form it is enough to restrict to the case  when $A=B$, namely it is finitely generated and integral over $\Z$. In this case,  its quotient field $K$ is a number field and  $A$ is an order of $K$.  By Dirichlet's Unit
Theorem $A^*\cong T\times\Z^{r+s-1}$  where $T$ is the (cyclic) group of roots of unity contained in $A$ and  $r$ and $2s$ are the number of real and non-real embeddings of $K$, respectively.
Clearly, $|T|$ is even since $-1\in A^*$. Let $T=\langle\zeta_{2n}\rangle$, then $\Z[\zeta_{2n}]\subseteq A$, so $ \Q(\zeta_{2n})\subseteq K$. For $n=1$ we have nothing to prove. If $n>1$, then all embeddings of $K$  in $\bar\Q$ must be non-real, so $r=0$ and $2s=[K:\Q]$. Since $\Q(\zeta_{2n})\subseteq K$ then $\frac{\phi(2n)}2 \mid s$ so the rank of $A^*$ is $g=s-1\ge\frac{\phi(2n)}2-1.$

As to the converse, let $n\ge1$ and 
let $K=\Q(\zeta_{2n})$. Then $\ok^*\cong C_{2n}\times\Z^{\frac{\phi(2n)}2-1}$ and  for any $k\ge1 $  the ring of Laurent polynomials in $k$ indeterminates $\ok[x_1,\dots,x_k,x_1^{-1},\dots, x_k^{-1}]$ has group of units isomorphic to $ C_{2n}\times\Z^{\frac{\phi(2n)}2-1+k}.$
\end{proof}

As a corollary we recover the characterization of the finite abelian groups which are  groups of units of an integral domain.
\begin{corollary}
\label{cor-domini}
{\sl
The finite abelian groups that occur as groups of units of integral domains of characteristic 0 are
the cyclic groups of order 2,4, or 6.
}
\end{corollary}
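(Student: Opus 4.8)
The plan is to read this off directly from Theorem \ref{domini-char0}, since that theorem already classifies all finitely generated abelian groups of units of characteristic zero integral domains as the groups $C_{2n}\times\Z^g$ with $g\ge\frac{\phi(2n)}2-1$. A finite abelian group $G$ occurs as such a group of units if and only if it is of this form with $g=0$, which forces the constraint $\frac{\phi(2n)}2-1\le 0$, i.e. $\phi(2n)\le 2$.

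First I would solve the elementary inequality $\phi(2n)\le 2$. Since $2n$ is even, the possible values of $2n$ are those even positive integers $m$ with $\phi(m)\le 2$, namely $m\in\{2,4,6\}$ (using $\phi(2)=1$, $\phi(4)=2$, $\phi(6)=2$, and $\phi(m)\ge 3$ for every other even $m$, which one checks from multiplicativity and $\phi(p^e)=p^{e-1}(p-1)$). Hence the only finite groups that can arise are $C_2$, $C_4$, and $C_6$.

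For the converse I would invoke the construction already given in the proof of Theorem \ref{domini-char0} with $k=0$: taking $K=\Q(\zeta_{2n})$ for $n=1,2,3$, the orders $\Z$, $\Z[i]=\Z[\zeta_4]$, and $\Z[\zeta_6]$ are integral domains of characteristic zero whose unit groups are exactly $C_2$, $C_4$, $C_6$ respectively (the ranks being $\frac{\phi(2)}2-1=\frac{\phi(4)}2-1=\frac{\phi(6)}2-1=0$). This shows all three groups are realized.

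There is no real obstacle here; the statement is a formal consequence of Theorem \ref{domini-char0}. The only point requiring a line of justification is the determination of the even integers $m$ with $\phi(m)\le 2$, which is a routine finite check.
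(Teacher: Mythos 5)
Your argument is correct and follows the same route as the paper: both deduce the result directly from Theorem \ref{domini-char0} by observing that a finite group of units forces $g=0$, hence $\frac{\phi(2n)}{2}-1\le 0$, which holds only for $n=1,2,3$. Your write-up merely spells out the arithmetic check on $\phi(2n)$ and the realizing orders $\Z$, $\Z[\zeta_4]$, $\Z[\zeta_6]$, which the paper leaves implicit.
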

\begin{proof}
From Theorem \ref{domini-char0} we know that  
if $A$ is a domain such that $A^*$ is finitely generated, then $A^*\cong
C_{2n}\times\Z^g$ with $g\ge (\frac{\phi(2n)}2-1)^*$, so we can have $g=0$ only for $n=1,2,3.$ 
\end{proof}

In Theorem \ref{domini-char0} we have seen that among the rings with finitely generated group of units and torsion subgroup isomorphic to  $C_{2n}$, the ring $A=\Z[\zeta_{2n}]$ has the minimum possible rank. The example of rings whose group of units has the same torsion subgroup, but  a greater rank are constructed in the theorem by localizing polynomial rings. In particular, the rings of our examples  are no longer integral over $\Z$. Actually, only some of these groups can also be obtained with units that are integral over $\Z$. The following proposition characterizes these cases.  

\begin{prop}
\label{domini-int}
{\sl 
The  finitely generated abelian groups that can be realized as group of units of an integral domain $A$, with $A$ integral over $\Z$, are
 the groups of the type $C_{2n}\times\Z^g$, with $n\ge1$, $g\ge0$ and $\phi(2n)\mid 2(g+1)$. 
}
\end{prop}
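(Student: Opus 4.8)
The plan is to reduce the ``only if'' direction to orders in number fields and then invoke Dirichlet's Unit Theorem, and to handle the ``if'' direction by an explicit construction with subfields of cyclotomic fields. We may assume $\car A=0$ (a domain integral over $\Z$ of positive characteristic is a finite field, covered by the positive-characteristic theory). So let $A$ be a characteristic-zero integral domain, integral over $\Z$, with $A^{*}\cong T\times\Z^{g}$ finitely generated; by Theorem~\ref{domini-char0} we already know $T\cong C_{2n}$ for some $n\ge1$ and $g\ge(\frac{\phi(2n)}{2}-1)^{*}$, so in this direction only the divisibility $\phi(2n)\mid2(g+1)$ remains. The crucial reduction is: if $\zeta$ generates $T$ and $u_{1},\dots,u_{g}$ freely generate the torsion-free part of $A^{*}$, then the subring $A':=\Z[\zeta,u_{1},u_{1}^{-1},\dots,u_{g},u_{g}^{-1}]\subseteq A$ is a finitely generated $\Z$-algebra which is integral over $\Z$, hence a finitely generated $\Z$-module, hence an order in the number field $K:=\mathrm{Frac}(A')$; moreover $(A')^{*}=A^{*}$, since every unit of $A$ is a group word in $\zeta$ and the $u_{i}$ and therefore already lies in $A'$ and is invertible there. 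Thus we may assume $A$ itself is an order of a number field $K$.

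Next I would apply Dirichlet's Unit Theorem (Proposition~\ref{dirichlet}) to this order. Writing $[K:\Q]=r+2s$ with $r$ real and $2s$ non-real embeddings, one gets $A^{*}\cong\mu(A)\times\Z^{r+s-1}$, where $\mu(A)$ is the group of roots of unity of $A$; hence $\mu(A)\cong C_{2n}$ and $g=r+s-1$. From $\zeta_{2n}\in A\subseteq K$ we obtain $\Q(\zeta_{2n})\subseteq K$, so $\phi(2n)\mid[K:\Q]$. If $n=1$ then $\phi(2)=1$ divides $2(g+1)$ trivially. If $n\ge2$ then $\zeta_{2n}$ is not real, so $K$ has no real embedding, i.e.\ $r=0$, $[K:\Q]=2s$ and $g=s-1$; therefore $\phi(2n)\mid 2s=2(g+1)$, which is exactly the claimed condition.

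For the converse I would produce, for each $n\ge1$ and $g\ge0$ with $\phi(2n)\mid2(g+1)$, a number field $K$ with $\ok^{*}\cong C_{2n}\times\Z^{g}$, and take $A=\ok$. If $n=1$, take $K$ totally real of degree $g+1$: such a field exists since, choosing by Dirichlet a prime $p\equiv1\pmod{2(g+1)}$, the unique subfield of degree $g+1$ of the maximal real subfield of $\Q(\zeta_{p})$ works, and then $\ok^{*}\cong C_{2}\times\Z^{g}$. If $n\ge2$, set $m:=2(g+1)/\phi(2n)\in\Z_{\ge1}$; for $m=1$ take $K=\Q(\zeta_{2n})$, with $\ok^{*}\cong C_{2n}\times\Z^{\phi(2n)/2-1}$. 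For $m\ge2$, pick (Dirichlet again) a prime $p>m+1$ with $p\equiv1\pmod m$ and $p\nmid2n$; inside $\Gal(\Q(\zeta_{2np})/\Q)\cong(\Z/2n\Z)^{*}\times(\Z/p\Z)^{*}$ choose the subgroup $H=\{1\}\times C$ with $\lvert C\rvert=(p-1)/m$, and set $K:=\Q(\zeta_{2np})^{H}$. Then $[K:\Q]=m\,\phi(2n)$ and $\Q(\zeta_{2n})\subseteq K$; moreover $p>m+1$ forces $p\ge2m+1$, so $\lvert C\rvert\ge2$, hence $H$ does not fix $\zeta_{p}$ and $\zeta_{p}\notin K$. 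Since $\gcd(2n,p)=1$, the only divisors of $2np$ divisible by $2n$ are $2n$ and $2np$; as $\mu(\Q(\zeta_{2np}))=\mu_{2np}$ and $\zeta_{p}\notin K$, this forces $\mu(K)=\mu_{2n}$. Finally $K$ is totally complex (it contains $\zeta_{2n}$ with $2n\ge4$), so by Dirichlet $\ok$ has unit rank $m\,\phi(2n)/2-1=g$, and $\ok^{*}\cong C_{2n}\times\Z^{g}$.

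The main obstacle is the construction in the last paragraph: realizing an \emph{arbitrary} admissible rank $g$ while keeping the torsion subgroup equal to $C_{2n}$ \emph{exactly}. The natural way to raise the unit rank is to enlarge the field, but this typically drags in new roots of unity; passing to the subfield $\Q(\zeta_{2np})^{H}$ — large enough to contain $\zeta_{2n}$ and have the prescribed degree, small enough to exclude $\zeta_{p}$ — is what decouples the two invariants. Note that, unlike in Theorem~\ref{domini-char0}, one cannot here gain rank by localizing polynomial rings, as that destroys integrality over $\Z$; this is precisely why only the ranks satisfying $\phi(2n)\mid2(g+1)$ occur.
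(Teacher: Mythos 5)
Your proof is correct and follows essentially the same route as the paper: reduce to the order $\Z[A^*]$, apply Dirichlet's Unit Theorem for the necessity of $\phi(2n)\mid 2(g+1)$, and realize each admissible $(n,g)$ via a subfield of $\Q(\zeta_{2np})$ with $p\equiv 1\pmod{m}$. Your fixed field $\Q(\zeta_{2np})^{H}$ is the paper's compositum $K_{d,p}\,\Q(\zeta_{2n})$ in different clothing, and your explicit check that $\mu(K)=\mu_{2n}$ (via the divisors of $2np$) spells out a step the paper states only briefly.
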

\begin{proof} 
Up to replacing $A$ with $\Z[A^*]$ 
we can assume that 
its  quotient field $K$  is a number field and that  $A$ is an order of $K$. Then the necessity of the condition follows from Theorem \ref{domini-char0} and from its proof, where it is shown that $\phi(2n)$ divides $2s = 2(g +1)$.

As for the converse, we have to construct examples of orders in number fields realizing  all the listed groups.
One  possible construction is the following.

For $n=1$ and $d\ge 1$, let $m$ be any integer such that $2d|\phi(m)$.
%
This condition guarantees that the field $\Q(\zeta_m+\zeta_m^{-1})$ contains a subfield
 $K_{d}$  of degree $d$ over $\Q$. Cleary, $K_d$ is totally real, so $r=d$, $s=0$ and the only roots of unity in  $K_{d}$ are $\pm1$,
hence 
the group of units of the integers of $K_{d}$ is isomorphic to $C_2\times \Z^{d-1}$. 

 Consider now the case $n>1$.
Let $d\ge1$ and let $p$ be a prime such that 
\begin{equation}
\label{congrp}
p\equiv1\pmod{2d}\, ;
\end{equation}
since there are infinitely many such primes (see for example  \cite[Corollary 2.11]{washington} or use  Dirichlet's Prime Number Theorem) we can assume $p\nmid n$. 
The congruence condition guarantees that inside the cyclotomic extension $\Q(\zeta_p)$ there is a (unique) subextension, $K_{d,p}$, of degree $d$ over $\Q$, which is indeed contained in the real subfield $\Q(\zeta_p+\zeta_p^{-1})$.
Put $L=L_{d,p,n}=K_{d,p}\Q(\zeta_{2n})$ and denote by ${\mathcal O}_L={\mathcal O}_{L_{d,p,n}}$ its ring of integers. We claim that
 $${\mathcal O}_L^*\cong C_{2n}\times\Z^{\frac{d\phi(2n)}2-1}.$$
 In fact,  $({\mathcal O}_L^*)_{tors}=\langle \zeta_{2n}\rangle$ since $\zeta_{2n}\in{\mathcal O}_L^*$, $\zeta_p\not\in{\mathcal O}_L^*$.  
 

To compute the rank of ${\mathcal O}_L^*$, we note that 
$\Q(\zeta_p)$ is arithmetically  disjoint from $\Q(\zeta_{2n})$ since $(p,2n)=1$, hence also 
$K_{d,p}$ is arithmetically  disjoint from $\Q(\zeta_{2n})$  and $[L:\Q]=[K_{d,p}:\Q][\Q(\zeta_{2n}):\Q]=d\phi(2n)$. Moreover, $L$ is Galois over $\Q$ and   all its embeddings are non-real, so the rank of its group of units is $s-1=\frac{d\phi(2n)}2-1$.

\end{proof}

To complete the description of the finitely generated groups of units of integral domains, in the following theorem we present the simple result for finite characteristic rings.

\begin{theo}
\label{domini-charp}
{\sl The  finitely generated abelian groups that occur as groups of units of an integral domain of characteristic $p$ are the groups  of the form $\F_{p^n}^*\times\Z^g$ with $n\ge1$ and $g\ge0$.
}
\end{theo}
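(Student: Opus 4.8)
The plan is to treat the two inclusions separately; both are short, in sharp contrast with the characteristic zero case.

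\emph{Necessity.} Suppose $A$ is an integral domain of characteristic $p$ whose group of units is finitely generated, and set $T=(A^*)_{tors}$. Following Section~\ref{notation}, I would pass to the subring $B=\fp[T]$. By Lemma~\ref{minimum} this $B$ is finite and integral over $\fp$, with $B^*=(A^*)_{tors}=T$. Being a subring of the domain $A$, it is itself a finite integral domain, hence a field (in a finite domain, multiplication by any nonzero element is injective, so surjective); thus $B=\F_{p^n}$ for some $n\ge1$ and $T=B^*=\F_{p^n}^*$. Therefore $A^*\cong T\times\Z^{g_A}\cong\F_{p^n}^*\times\Z^{g_A}$, which has the asserted shape with $g=g_A\ge0$.

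\emph{Sufficiency.} Given $n\ge1$ and $g\ge0$, I would exhibit the Laurent polynomial ring $A=\F_{p^n}[x_1,\dots,x_g,x_1^{-1},\dots,x_g^{-1}]$. This is an integral domain of characteristic $p$, and since $\F_{p^n}$ is reduced, the computation recalled in the subsection on units of rings of Laurent polynomials gives $A^*\cong\F_{p^n}^*\times\Z^g$. (For $g=0$ one simply takes $A=\F_{p^n}$.)

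There is essentially no obstacle: Lemma~\ref{minimum} forces the torsion part to be the unit group of a finite field, and then the rank is completely unconstrained, so the realizability question collapses. The only point worth stating explicitly in the write-up is the elementary fact that a finite integral domain is a field.
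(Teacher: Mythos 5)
Your proof is correct and follows essentially the same route as the paper: reduce to $B=\fp[(A^*)_{tors}]$, which is finite by Lemma~\ref{minimum}, observe that a finite integral domain is a field so $B\cong\F_{p^n}$ and $(A^*)_{tors}\cong\F_{p^n}^*$, and realize every $\F_{p^n}^*\times\Z^g$ via Laurent polynomials over $\F_{p^n}$. No discrepancies to report.
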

\begin{proof} 
Let $A$ be a domain 
and let $A^*\cong (A^*)_{tors}\times \Z^g$  with $ (A^*)_{tors}$ finite and $g\ge0$.
By Lemma \ref{minimum}, for  $B=\F_p[ (A^*)_{tors}]$ we have $B^*=(A^*)_{tors}$. 
Now, $B$ is a finite integral domain (it is a  finitely generated integral extension of $\F_p$), whence it is a finite field, namely $B\cong\F_{p^n}$ for some $n\ge1$. It follows that  $(A^*)_{tors}=B^*\cong\F_{p^n}^*$, and $A^*\cong  \F_{p^n}^*\times\Z^g$ as required. 

Conversely, for $n\ge1$ and $g\ge0$, the group $\F_{p^n}^*\times\Z^g$ is isomorphic to the group of units of the ring of  Laurent polynomials with coefficients in $\F_{p^n}$ and $g$ indeterminates.
\end{proof}

\section{Torsion-free rings: preliminary results}
\label{sec:torsion-free-prel} 
A commutative ring $A$ is called  torsion-free if its  only element of finite  additive  order is 0. Clearly, a torsion-free ring has characteristic zero.

For a torsion-free ring $A$ we put $Q_A=A\otimes_\Z \Q$. We note that  in this case the map
$$\iota\colon A\to Q_A$$ 
defined by $a\mapsto a\otimes 1$ is an embedding, so we will say that $A\subseteq Q_A$.

As noted in Section \ref{notation} (Lemma \ref{minimum} and Remark \ref{rem-differenze}), to characterize the  finitely generated abelian groups $T\times\Z^g$ that  arise as groups of units  of torsion-free rings, a substantial step is the study of  the subrings that are generated  over $\Z$  by  units of finite order. In fact,  in this subclass all possible torsion subgroups $T$ are realized and, for each $T$, the minimum possible rank $g(T)$ is attained. 
This case is much easier  to study since if $A$ is integral over $\Z$ then $Q_A$ is a finite dimensional $\Q$-algebra and $A$ is an order of $Q_A$. 
In this  section and in the first part of the next one we will restrict  to this case; then it will be easy to deal with the general case.

\smallskip

The following  lemma allows us to describe the ring $A$ when it is generated by one torsion unit and it is a generalization  of  \cite[Lemma 4.2]{dcdBLMS}.

 \begin{lemma}
\label{lemma1}
\label{lemma1bis}
\label{lemmadx}
 {\sl  Let $K$ be a number field and let $\ok$ be its ring of integers. Assume that $\ok\subseteq A$. 
 Let  $\alpha\in A^*$ be an element of order $n$, let 
$$\varphi_\alpha\colon \ok[x]\to A$$
 be the evaluation homomorphism
  $p(x)\mapsto p(\alpha)$.
  
Then  $\ker(\varphi_\alpha) =(\mu_\alpha(x))$ with
 $$\mu_\alpha(x)=\Psi_{m_1}(x)\cdots\Psi_{m_r}(x)$$
   where, for each $i$, $\Psi_{m_i}(x)\in\ok[x] $ denotes the minimal polynomial  over $K$ of a primitive $m_i$-th root of unity. Moreover,  the $\Psi_{m_i}(x)$'s are pairwise distinct and
$[m_1,\dots,m_r]=lcm\{m_1,\dots,m_r\}=n$.
}
 \end{lemma}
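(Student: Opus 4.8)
The plan is to analyze $\ker(\varphi_\alpha)$ directly, using that $\alpha$ is a torsion unit of order $n$ together with the hypothesis $\ok\subseteq A$. Since $\alpha^n=1$, the polynomial $x^n-1\in\ker(\varphi_\alpha)$, so $\ker(\varphi_\alpha)$ is a nonzero ideal of $\ok[x]$. The first task is to show $\ker(\varphi_\alpha)$ is \emph{principal}. Here I would use that $\ok[x]$ localized at the multiplicative set of nonzero integers (equivalently, tensored with $\Q$) gives $K[x]$, a PID; so over $K[x]$ the extended ideal is generated by some monic $\mu_\alpha(x)$, which I can take to be the monic generator of $\ker(\varphi_\alpha)\otimes_\ok K$. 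Because $\alpha$ is integral over $\ok$ (it satisfies $x^n-1$), the minimal polynomial $\mu_\alpha(x)$ of $\alpha$ over $K$ actually lies in $\ok[x]$ and is monic, and it divides $x^n-1$ in $\ok[x]$ since $\ok$ is integrally closed (Gauss's lemma / the fact that $\ok[x]$ is integrally closed, or simply that the quotient $(x^n-1)/\mu_\alpha(x)$ computed in $K[x]$ has coefficients that are algebraic integers). Then I must check $\ker(\varphi_\alpha)=(\mu_\alpha(x))$ exactly, not just after tensoring with $\Q$: if $f(x)\in\ker(\varphi_\alpha)$, write $f=q\mu_\alpha+s$ by division with remainder (valid since $\mu_\alpha$ is monic, so this works over $\ok[x]$), and $s(\alpha)=0$ with $\deg s<\deg\mu_\alpha$ forces $s=0$ by minimality of $\mu_\alpha$ over $K$ (the coefficients of $s$ lie in $\ok\subseteq K$).

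Next I would identify the factorization of $\mu_\alpha(x)$. Since $\mu_\alpha(x)\mid x^n-1$ in $\ok[x]$ and $x^n-1=\prod_{d\mid n}\Phi_d(x)$ in $\Z[x]\subseteq\ok[x]$, I want to say $\mu_\alpha$ is a product of the $K$-irreducible factors $\Psi_{m}(x)$ of the various $\Phi_m(x)$ with $m\mid n$. Over $K$, each $\Phi_m(x)$ factors into irreducibles, each of which is the minimal polynomial over $K$ of some primitive $m$-th root of unity (all such minimal polynomials are Galois-conjugate and have the same degree $[K(\zeta_m):K]$, dividing $\phi(m)$). Since $\mu_\alpha$ is squarefree (it divides the squarefree polynomial $x^n-1$ — the $\Phi_d$ are pairwise coprime over $\Q$ hence over $K$, and each $\Phi_d$ is separable), $\mu_\alpha$ is a product of \emph{distinct} such irreducible factors $\Psi_{m_1},\dots,\Psi_{m_r}$, one for each — possibly none — of finitely many $m_i\mid n$, and the $\Psi_{m_i}$ are pairwise distinct. (If two of them shared a root they would be equal as $K$-minimal polynomials and $\mu_\alpha$ would not be squarefree.)

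The remaining point is the assertion $\operatorname{lcm}(m_1,\dots,m_r)=n$. Set $m=\operatorname{lcm}(m_1,\dots,m_r)$; clearly each $m_i\mid n$ so $m\mid n$. For the reverse divisibility, observe that $\alpha$ is a root of $\mu_\alpha(x)=\prod\Psi_{m_i}(x)$, so $\alpha$ is a root of some $\Psi_{m_i}(x)$, hence $\alpha$ is a primitive $m_i$-th root of unity for that $i$ — but wait, $\alpha$ has order exactly $n$, so in fact $\alpha$ being a root of $\Psi_{m_i}$ (a factor of $\Phi_{m_i}$, whose roots are primitive $m_i$-th roots of unity) forces $m_i=n$, and then $n=m_i\mid m\mid n$. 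Thus one of the $m_i$ equals $n$ and so $\operatorname{lcm}(m_1,\dots,m_r)=n$ automatically; in particular $r\ge 1$. I expect the main obstacle to be the bookkeeping in step one: carefully justifying that the kernel is principal and generated by the $K$-minimal polynomial even though $\ok[x]$ is not a PID — this hinges on $\mu_\alpha$ being monic (so that polynomial division stays inside $\ok[x]$) and on $\ok$ being integrally closed (so that $\mu_\alpha\in\ok[x]$ and $\mu_\alpha\mid x^n-1$ already in $\ok[x]$). Everything after that is the standard structure theory of $\Phi_m$ over a number field, which I would invoke with only brief justification.
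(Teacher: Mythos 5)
Your first two paragraphs follow essentially the same route as the paper and are sound: extend $\varphi_\alpha$ to $K[x]$, take the monic generator $\mu_\alpha$ of the kernel there, note that it divides the separable polynomial $x^n-1=\prod_{d\mid n}\Phi_d(x)$ and is therefore a product of pairwise distinct irreducible factors $\Psi_{m_i}\in\ok[x]$, and use monicity of $\mu_\alpha$ to carry out division with remainder inside $\ok[x]$ and conclude $\ker(\varphi_\alpha)=(\mu_\alpha)$. (Be careful, though, with the phrase ``the minimal polynomial of $\alpha$ over $K$'': what you are really using is the monic generator of the kernel of the evaluation map $K[x]\to Q_A$, since $\alpha$ need not live in a field.)

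The last step contains a genuine error. You argue that since $\mu_\alpha(\alpha)=\prod_i\Psi_{m_i}(\alpha)=0$, the element $\alpha$ must be a root of some single $\Psi_{m_i}$, hence a primitive $m_i$-th root of unity, forcing $m_i=n$. That deduction requires $A$ to have no zero divisors, which is not assumed --- and in the intended applications $A$ is typically \emph{not} a domain. The paper's own example with $\cm=\Z[\zeta_3]\times\Z[i]$ and $\alpha=(\zeta_3,i)$ of order $n=12$ has $\mu_\alpha=\Phi_3\Phi_4$: neither $\Phi_3(\alpha)$ nor $\Phi_4(\alpha)$ vanishes, no $m_i$ equals $12$, and yet $[3,4]=12$. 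So your conclusion that some $m_i$ equals $n$ (and hence that the lcm statement is ``automatic'') is false in general. The correct argument is the one the paper gives: each $m_i$ divides $n$, so $m:=[m_1,\dots,m_r]$ divides $n$; if $m$ were a proper divisor of $n$, then every $\Psi_{m_i}$ would divide $x^{m}-1$, and since the $\Psi_{m_i}$ are distinct irreducible factors of the separable polynomial $x^{m}-1$, their product $\mu_\alpha$ would divide $x^{m}-1$, giving $\alpha^{m}=1$ and contradicting $\ord(\alpha)=n$. (Your parenthetical claim that $r\ge1$ is true, but for the trivial reason that $1\notin\ker(\varphi_\alpha)$, not because some $m_i=n$.)
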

 \begin{proof}
The element $\alpha$ has order $n$, so $x^n-1\in \ker(\varphi_\alpha)$. 
Denote by $\tilde\varphi_\alpha\colon K[x]\to {Q_A}$ the extension of $\varphi_\alpha$.
Then,  there exists a monic polynomial $\mu_\alpha(x)\in K[x]$ such that $\ker(\tilde \varphi_\alpha)=(\mu_\alpha(x))$.
Clearly, $\mu_\alpha(\alpha)=0$ and  $\mu_\alpha(x)$ divides  the separable polynomial  $x^n-1$ in $K[x]$,
 $$\mu_\alpha(x)\mid (x^{n}-1)=\prod_{m|n}\Phi_{m}(x).$$
Now,  each $\Phi_m$ factors as a product of distinct cyclotomic polynomials over $K$, hence  $\mu_\alpha(x)$ factors in $K[x]$ as
$$\mu_\alpha(x)=\Psi_{m_1}(x)\cdots\Psi_{m_r}(x),$$ 
where $\Psi_{m_i}(x) $ denotes the minimal polynomial  over $K$ of a primitive $m_i$-th root of unity. The $\Psi_{m_i}(x)$'s are pairwise distinct since $x^n-1$ is separable; moreover, $\Psi_{m_i}(x)\in \ok[x]$ for all $i$, so 
 $\mu_\alpha(x)\in\ok[x]$ and $\mu_\alpha(x)\in \ker(\varphi_\alpha)$.
 
On the other hand, for each $f(x)\in\ker(\varphi_\alpha)$ we have $\mu_\alpha(x)|f(x)$ in $K[x]$ and since  $\mu_\alpha(x)\in\ok[x]$ is a monic polynomial, then it divides $f(x)$ in $\ok[x]$. This proves that $\ker(\varphi_\alpha)=(\mu_\alpha(x))$.

Let $[m_1,\dots,m_r]=m$.  Since $m_i\vert n$ for all $i$, then  $m\mid n$. In fact $m=n$, since  otherwise $\mu_\alpha(x)\mid x^{m}-1$
and therefore  $\alpha^{m}=1$, contrary to our assumption.
\end{proof}

\begin{prop}
\label{QB}
{\sl Let $A=\Z[\alpha_1,\dots,\alpha_s]$, where, for all $i$, $\alpha_i$ is a unit of finite order and assume that $A$ is torsion free. 
Then the $\Q$-algebra $Q_A=A\otimes_\Z\Q$ is a finite direct product of cyclotomic fields.
In particular, $Q_A$ is a semisimple $\Q$-algebra. 
}
\end{prop}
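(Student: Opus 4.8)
The plan is to realize $A$ as a quotient of a group algebra over $\Z$ and then to invoke the structure of rational group algebras of finite abelian groups. First I would record why $Q_A$ is finite‑dimensional: each $\alpha_i$ has finite multiplicative order, so $\alpha_i^{n_i}=1$ for some $n_i$ and $\alpha_i$ is integral over $\Z$; since $A$ is generated over $\Z$ by finitely many such elements, it is a finitely generated $\Z$-module, and hence $Q_A=A\otimes_\Z\Q$ is a finite‑dimensional $\Q$-algebra. In particular any decomposition of $Q_A$ as a product of fields is automatically finite, so it suffices to prove that $Q_A$ is a product of cyclotomic fields; semisimplicity then follows at once.

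Now let $G\le A^*$ be the finite abelian group generated by $\alpha_1,\dots,\alpha_s$. The natural $\Z$-algebra homomorphism from the group algebra $\Z G$ to $A$ sending each group element to itself is surjective (its image is exactly $\Z[\alpha_1,\dots,\alpha_s]=A$), so tensoring with $\Q$ gives a surjection $\Q G\to Q_A$. By Maschke's theorem $\Q G$ is semisimple, and being commutative and finite‑dimensional it is a finite product of fields; each such field is a quotient of $\Q G$, hence is generated over $\Q$ by the images of the elements of $G$, which are roots of unity. A subfield of $\overline{\Q}$ generated over $\Q$ by finitely many roots of unity is cyclotomic, so $\Q G$ is a finite product of cyclotomic fields. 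Finally, any ideal of a finite product of fields $\prod_i K_i$ equals $\prod_{i\in S}K_i$ for some subset $S$ (look at the idempotents), so every quotient of $\prod_i K_i$ is isomorphic to $\prod_{i\notin S}K_i$; applying this to $\Q G\to Q_A$ shows that $Q_A$ is a finite product of cyclotomic fields.

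I do not expect a genuine obstacle; the only two points needing a word of justification are that a field finitely generated over $\Q$ by roots of unity is cyclotomic (it is $\Q(\zeta_N)$ with $N$ the least common multiple of their orders) and that a quotient of a finite product of fields is again one. A self‑contained alternative that avoids group algebras and stays closer to the tools of this section proceeds by induction on $s$: writing $A'=\Z[\alpha_1,\dots,\alpha_{s-1}]$, flatness of $-\otimes_\Z\Q$ gives $Q_{A'}\subseteq Q_A$, and by the inductive hypothesis $Q_{A'}\cong\prod_j\Q(\zeta_{m_j})$; then $Q_A=Q_{A'}[\alpha_s]$ is a quotient of $Q_{A'}[x]/(x^{n_s}-1)\cong\prod_j\Q(\zeta_{m_j})[x]/(x^{n_s}-1)$, and over each $\Q(\zeta_{m_j})$ the separable polynomial $x^{n_s}-1$ factors as a product of distinct minimal polynomials of roots of unity (as in Lemma~\ref{lemmadx}), so by the Chinese Remainder Theorem each factor is a product of fields of the form $\Q(\zeta_{m_j})(\zeta_d)=\Q(\zeta_\ell)$ with $\ell=\lcm(m_j,d)$, and one concludes as before.
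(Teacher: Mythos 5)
Your proposal is correct, but the main argument you give is a genuinely different route from the paper's. The paper first applies Lemma~\ref{lemma1} and the CRT to write each $\Q[\alpha_i]$ as $\prod_j\Q(\zeta_{m_j})$, then computes explicitly that $\Q(\zeta_n)\otimes_\Q\Q(\zeta_m)\cong\Q(\zeta_{[n,m]})^{\phi((n,m))}$ to conclude that the tensor product $\Q[\alpha_1]\otimes_\Q\cdots\otimes_\Q\Q[\alpha_s]$ is a product of cyclotomic fields, and finally realizes $Q_A$ as an epimorphic image of that tensor product. You instead cover $Q_A$ by the rational group algebra $\Q G$ of the finite group $G=\langle\alpha_1,\dots,\alpha_s\rangle$, invoke Maschke's theorem for semisimplicity, and identify each simple factor as $\Q(\zeta_N)$ because it is generated over $\Q$ by roots of unity. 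Both proofs share the final step (a quotient of a finite product of fields is the product of a sub-collection, read off from the idempotents), and both are sound; the two small facts you flag as needing justification are indeed the only ones, and both are standard. What the paper's route buys is explicit control over which cyclotomic factors occur and with what multiplicities, and it reuses the Lemma~\ref{lemma1}/CRT machinery that the rest of Sections~\ref{sec:torsion-free-prel}--\ref{sec:torfree} depends on anyway; what your route buys is brevity and conceptual economy, since Maschke's theorem delivers semisimplicity at once without any factorization computation. Your alternative inductive argument at the end is essentially a reorganization of the paper's method and would also work.
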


\begin{proof} 
 For $\alpha=\alpha_i$, in the notation of Lemma \ref{lemma1}, let $\ker(\varphi_\alpha)=(\mu_\alpha(x))$ and assume
 $$\mu_\alpha(x)=\Phi_{m_1}(x)\cdots\Phi_{m_r}(x)$$  
for some distinct $m_1,\dots m_r$.
Then the CRT gives 
   $$\Q[\alpha]=\Z[\alpha]\otimes_\Z\Q\cong \Q[x]/(\mu_\alpha(x))\cong \prod_{i=1}^r\Q[x]/(\Phi_{m_i}(x))\cong \prod_{i=1}^r\Q(\zeta_{m_i}).$$ 
  Now, the degree of $\zeta_m$ over $ \Q(\zeta_n)$ is $\phi(m)/\phi((n,m))$, so $m$-th cyclotomic polynomial $\Phi_m(x)$ splits  into $\phi((n,m))$ of factors in $ \Q(\zeta_n)[x]$. It follows that
   $$\Q(\zeta_n)\otimes_\Q\Q(\zeta_m)\cong \Q(\zeta_n)[x]/(\Phi_m(x))\cong \Q(\zeta_{[n,m]})^{\phi((n,m))},$$
so the $\Q$-algebra 
   ${\mathcal Q}=\Q[\alpha_1]\otimes_\Q\cdots \otimes_\Q\Q[\alpha_s]$ is a product of cyclotomic fields. It turn out that the same is true for   $Q_A=\Q[ \alpha_1,\dots, \alpha_s]$  since it is the epimorphic image of  ${\mathcal Q}$ via the $\Q$-algebra homomorphism defined by $\alpha_1\otimes\cdots \otimes\alpha_s\mapsto \alpha_1\cdots\alpha_s$.
\end{proof}

\begin{remark}
\label{maxord}
{\rm The last proposition shows that  the $\Q$-algebra $Q_A$ is isomorphic to $\prod_{i=1}^t\Q(\zeta_{n_i})$ for some $n_1,\dots, n_t$, namely, it is 
  semisimple and of finite dimension over the perfect field $\Q$, hence  it is separable (see for example \cite[Cor. 7.6]{CurtisReiner1981-1}). Moreover, $Q_A$ is clearly commutative, so by   \cite[Prop. 26.10]{CurtisReiner1981-1} it has a unique maximal order  $\cm_A$, which is  the integral closure of $\Z$ in $Q_A$, namely 
$$\cm_A\cong\prod_{i=1}^t\Z[\zeta_{n_i}].$$ 
Since $A$ is an order of $Q_A$, then  $A$ is a subring of $\cm_A$, therefore the rings we are  taking into account are subrings of finite products of cyclotomic rings.
}
\end{remark}
The next lemma shows that the groups of units of all  orders of $Q_A$ have the same rank (see also \cite[Prop. 2.5]{Sehgal1993} or \cite[Lemma 3.7]{bartel_lenstra_2017}). 

\begin{lemma}
 \label{finito}
 {\sl Let  $R$ be an order  of a commutative and finitely generated semisimple $\Q$-algebra $Q$ and let $\M$ denote its maximal order. Then $R^*$ has the same rank as $\M^*$.}
 \end{lemma}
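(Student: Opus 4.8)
The statement is that for an order $R$ in a commutative, finite-dimensional semisimple $\Q$-algebra $Q$ with maximal order $\M$, the unit groups $R^*$ and $\M^*$ have equal rank. My plan is to exploit the fact that $R \subseteq \M$ with $\M$ a finitely generated $\Z$-module and $[\M : R] = m < \infty$ as abelian groups (since both are full lattices in $Q$; here I use Remark \ref{maxord} that $\M$ is the integral closure of $\Z$ in $Q$, or more elementarily that any two orders in $Q$ are commensurable). The key point will be that $m\M \subseteq R$, so the conductor ideal $\mathfrak{f} = \{x \in \M : x\M \subseteq R\}$ is a nonzero ideal of $\M$ with finite index, and $\M/\mathfrak{f}$ is a finite ring.

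First I would observe $R^* \subseteq \M^*$, so $\operatorname{rank}(R^*) \le \operatorname{rank}(\M^*)$; the content is the reverse inequality. For this I would consider the reduction map $\pi\colon \M^* \to (\M/\mathfrak{f})^*$ and show that its kernel $U = \{u \in \M^* : u \equiv 1 \pmod{\mathfrak{f}}\}$ is contained in $R^*$. Indeed if $u \in \M$ with $u - 1 \in \mathfrak{f} \subseteq R$, then $u \in R$; and $u^{-1} \in \M$ with $u^{-1} - 1 = -u^{-1}(u-1)$, and since $u^{-1} - 1 \in \mathfrak{f}$ as well (because $\mathfrak{f}$ is an $\M$-ideal and $u$ is a unit of $\M$, so $u^{-1}\mathfrak{f} = \mathfrak{f}$), we get $u^{-1} \in R$ too. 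Hence $U \subseteq R^* \subseteq \M^*$. Since $(\M/\mathfrak{f})^*$ is a finite group, $U$ has finite index in $\M^*$, so $\operatorname{rank}(U) = \operatorname{rank}(\M^*)$; and since $U \subseteq R^*$ is a subgroup of a finitely generated abelian group, $\operatorname{rank}(R^*) \ge \operatorname{rank}(U) = \operatorname{rank}(\M^*)$. Combining the two inequalities gives equality.

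The main obstacle — really the only thing requiring care — is justifying that the conductor $\mathfrak{f}$ is nonzero and of finite index in $\M$, i.e. that $R$ and $\M$ are commensurable as $\Z$-lattices. This follows because an order of $Q$ is by definition a subring that is a finitely generated $\Z$-module spanning $Q$ over $\Q$ (a full lattice), and $\M$ is likewise a full $\Z$-lattice; two full lattices $R \subseteq \M$ in the same $\Q$-vector space satisfy $d\M \subseteq R$ for $d = [\M : R] \in \Z_{>0}$, whence $d\M \subseteq \mathfrak{f}$ and $\mathfrak{f} \ne 0$, and $\M/\mathfrak{f}$ is a quotient of the finite group $\M/d\M$, hence finite. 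One should also note that $\M/\mathfrak{f}$ being a finite ring has a finite unit group, which is all that is needed; no structure theory of $\M$ beyond its being a finitely generated $\Z$-module is required. (Alternatively, one could phrase the whole argument via Dirichlet's Unit Theorem applied componentwise, writing $Q \cong \prod K_i$ with $K_i$ number fields and $\M \cong \prod \mathcal{O}_{K_i}$, but the conductor argument above is cleaner and avoids decomposing $R$, which need not respect the product decomposition of $Q$.)
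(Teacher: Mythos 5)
Your proof is correct and follows essentially the same strategy as the paper's: both arguments produce a finite-index subgroup of $\M^*$ contained in $R^*$ by reducing modulo an ideal of $\M$ of finite index that lies inside $R$ (the paper uses $m\M$ with $m=[\M:R]$ and the subgroup $(\M^*)^c$ with $c=|(\M/m\M)^*|$, while you use the conductor $\mathfrak{f}\supseteq m\M$ and the kernel of $\M^*\to(\M/\mathfrak{f})^*$, a cosmetic difference). The inclusion of that subgroup in $R^*$ and the equality of ranks for finite-index subgroups then give the result exactly as in the paper.
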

   \begin{proof}
Each  order $R$ of $Q$ is a subring of finite index of $\M$, since both are $\Z$-modules of the  same finite rank. 
 Let $[\M:R]=m$, then the ideal $m\M$ is contained in $R$ and $\M/m\M$ is a finite ring.
 
 Consider  the projection $\pi\colon \M\to \M/m\M$. Since $\pi$ is a ring homomorphism,  it sends the unit of $\M$ into the  unit of the quotient and the restriction of $\pi\colon\M^*\to( \M/m\M)^*$ is a group homomorphism. 
 
 Let $|( \M/m\M)^*|=c$. For each $\epsilon\in \M^*$ we have that $\epsilon^c\equiv1\pmod{m\M}$ so $\epsilon^c-1\in{m\M}\subset R$. Now, $R$ and $\M$ have the same identity, hence $\epsilon^c\in R$ and $(\M^*)^c\subseteq R^*\subseteq \M^*$.  Finally, since $(\M^*)^c$ and $\M^*$ have the same rank,  this is  also the rank of $R^*.$
  \end{proof}

\begin{corollary}
\label{unitsM}
{\sl
In the notation of Proposition \ref{QB}, let $Q_A=\prod_{i=1}^t\Q(\zeta_{n_i})$. Then,  $A^*\cong T\times\Z^g$ where 
$$g=\sum_{i=1}^t(\frac{\phi(n_i)}2-1)^*$$and $T$ is a subgroup of even order of $U=\prod_{i=1}^t\langle-\zeta_{n_i}\rangle$.}
\end{corollary}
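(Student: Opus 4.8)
The plan is to assemble the statement from three ingredients already in hand: the description of the maximal order in Remark \ref{maxord}, the rank-invariance of orders in Lemma \ref{finito}, and the explicit unit groups of cyclotomic rings recalled in Subsection \ref{subs:cyclo}. By Proposition \ref{QB} and Remark \ref{maxord} we may write $Q_A\cong\prod_{i=1}^t\Q(\zeta_{n_i})$, a commutative finite-dimensional semisimple $\Q$-algebra whose unique maximal order is $\cm_A\cong\prod_{i=1}^t\Z[\zeta_{n_i}]$; since $A$ is an order of $Q_A$ we have the inclusion of rings $A\subseteq\cm_A$. By the standing hypothesis of Section \ref{notation}, $A^*$ is finitely generated abelian, so we may set $T=(A^*)_{tors}$ and write $A^*\cong T\times\Z^{g_A}$, and it remains to identify $g_A$ with $g$ and to locate $T$ inside $U$.

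For the rank, I would apply Lemma \ref{finito} with $R=A$, $\M=\cm_A$ and $Q=Q_A$, which gives $g_A=\rank(A^*)=\rank(\cm_A^*)$. Now $\cm_A^*\cong\prod_{i=1}^t\Z[\zeta_{n_i}]^*$, and by Dirichlet's Unit Theorem applied to the ring of integers $\Z[\zeta_{n_i}]$ of $\Q(\zeta_{n_i})$ (all of whose embeddings are non-real when $n_i\ge3$) one has $\Z[\zeta_{n_i}]^*\cong\langle-\zeta_{n_i}\rangle\times\Z^{(\frac{\phi(n_i)}2-1)^*}$, using the starred convention to absorb the degenerate indices $n_i\in\{1,2\}$ where $\Q(\zeta_{n_i})=\Q$. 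Summing the ranks of the factors yields $\rank(\cm_A^*)=\sum_{i=1}^t(\frac{\phi(n_i)}2-1)^*=g$, hence $g_A=g$.

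For the torsion part, the inclusion $A\subseteq\cm_A$ forces $T=(A^*)_{tors}\le(\cm_A^*)_{tors}$. Since the torsion subgroup of a finite direct product of groups is the product of the torsion subgroups, and since $\langle-\zeta_{n_i}\rangle$ is precisely the full group of roots of unity of $\Z[\zeta_{n_i}]$ (the $n_i$-th roots of unity if $n_i$ is even, the $2n_i$-th if $n_i$ is odd — in either case generated by $-\zeta_{n_i}$, as recalled in Subsection \ref{subs:cyclo}), we get $(\cm_A^*)_{tors}\cong\prod_{i=1}^t\langle-\zeta_{n_i}\rangle=U$. Thus $T$ is a subgroup of $U$. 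Finally $-1\in A^*$ is an element of order $2$, so $2\mid|T|$ and $T$ has even order, completing the proof.

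There is no substantial obstacle here: the corollary is essentially a bookkeeping consequence of the already-established results. The only two points that need a moment's care are (a) checking that the torsion subgroup of $\cm_A^*$ is exactly $U$ rather than merely containing it — which is where one uses that $-\zeta_{n_i}$ generates all roots of unity of $\Z[\zeta_{n_i}]$ in both parity cases — and (b) keeping track of the starred convention for the indices with $n_i\le2$, so that the rank formula reads uniformly as $\sum_{i=1}^t(\frac{\phi(n_i)}2-1)^*$.
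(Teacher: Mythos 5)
Your proposal is correct and follows essentially the same route as the paper: containment $A\subseteq\cm_A\cong\prod_{i=1}^t\Z[\zeta_{n_i}]$, Lemma \ref{finito} to equate the ranks, the known decomposition $\Z[\zeta_{n_i}]^*\cong\langle-\zeta_{n_i}\rangle\times\Z^{(\frac{\phi(n_i)}2-1)^*}$ to compute $\cm_A^*\cong U\times\Z^g$, and the presence of $-1\in A^*$ for the even order of $T$. Your write-up merely spells out the two bookkeeping points (that $(\cm_A^*)_{tors}$ equals $U$ exactly, and the starred convention for $n_i\le 2$) which the paper leaves implicit.
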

\begin{proof}
The order $A$ is contained in the maximal order $\cm_A\cong\prod_{i=1}^t\Z[\zeta_{n_i}]$, hence $\{\pm1\}<A^*<
\cm_A^*$ and by Lemma \ref{finito} the two groups have the same rank $g$. The result follows since
$$\cm_A^*\cong \prod_{i=1}^t\Z[\zeta_{n_i}]^*\cong\prod_{i=1}^t\left(\langle-\zeta_{n_i}\rangle\times\Z^{(\frac{\phi(n_i)}2-1)^*}\right)\cong U\times\Z^g.$$
\end{proof}

The next proposition classifies the cases when  $\Z[\alpha]$ coincides with $\cm$.

\begin{prop}
\label{prop:zetaalpha}
{\sl
Let $\alpha\in A^*$ be an element of finite order. Denote by  $\varphi_\alpha\colon\Z[x]\to A$  the evaluation homomorphism and let $\mu_\alpha(x)=\Phi_{m_1}(x)\dots\Phi_{m_r}(x)$ be a generator of $\ker(\varphi_\alpha)$. 
Then 
$$
  \Z[\alpha]\cong\prod_{i=1}^{r}\Z[\zeta_{m_i}]
$$
if and only if , for all $i,j$, the ratio $m_i/m_j$ is not a prime power. 

In this case   
$$\Z[\alpha]^*\cong \prod_{i=1}^{r}\langle-\zeta_{m_i}\rangle\times\Z^{\sum_{i=1}^{r}(\frac{\phi(m_i)}2-1)^*}.$$
}
\end{prop}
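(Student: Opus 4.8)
The plan is to deduce the statement from the work already done on the CRT map. Writing $\varphi_\alpha\colon\Z[x]\to A$ for the evaluation at $\alpha$ and $\mu_\alpha(x)=\Phi_{m_1}(x)\cdots\Phi_{m_r}(x)$ for a generator of $\ker(\varphi_\alpha)$ (the factorization into distinct cyclotomic polynomials being guaranteed by Lemma \ref{lemma1} applied with $K=\Q$), we have a canonical isomorphism
$$
\Z[\alpha]\cong\Z[x]/(\mu_\alpha(x))=\Z[x]/\bigl(\Phi_{m_1}(x)\cdots\Phi_{m_r}(x)\bigr).
$$
The CRT map of \eqref{CRTmap}, namely
$$
\psi\colon \Z[x]/\bigl(\Phi_{m_1}(x)\cdots\Phi_{m_r}(x)\bigr)\longrightarrow \prod_{i=1}^{r}\Z[x]/(\Phi_{m_i}(x))\cong\prod_{i=1}^r\Z[\zeta_{m_i}],
$$
is always injective; so $\Z[\alpha]\cong\prod_{i=1}^{r}\Z[\zeta_{m_i}]$ holds (via $\psi$, hence as a ring isomorphism onto the product) if and only if $\psi$ is surjective.

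First I would dispose of the degenerate case $r=1$: then $\Z[\alpha]\cong\Z[\zeta_{m_1}]$ trivially, and the condition ``$m_i/m_j$ is not a prime power for all $i,j$'' is vacuous (there is only one index, and even reading $i=j$ the ratio is $1$, which by the convention used throughout the paper is not counted as a prime power in this context — one should state this explicitly to avoid ambiguity, or simply note $r=1$ makes both sides trivially true). For $r\ge2$ I would invoke Proposition \ref{prop:cinese} directly: after relabeling so that $m_1<\cdots<m_r$ (permuting the factors $\Phi_{m_i}$ does not change the ring $\Z[x]/(\mu_\alpha(x))$ nor the product, and does not affect the symmetric condition on the ratios), Proposition \ref{prop:cinese} states that $\psi$ is an isomorphism if and only if $m_j/m_i$ is not a prime power for all $1\le i<j\le r$, which is exactly the stated condition. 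This establishes the ``if and only if''.

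For the final assertion, assume the equivalent conditions hold, so $\Z[\alpha]\cong\prod_{i=1}^{r}\Z[\zeta_{m_i}]$ as rings. Then $\Z[\alpha]^*\cong\prod_{i=1}^{r}\Z[\zeta_{m_i}]^*$, and by Dirichlet's Unit Theorem (in the form recalled in Section \ref{subs:cyclo}, using the $(\cdot)^*$ convention to handle $m_i\in\{1,2\}$) each factor is $\Z[\zeta_{m_i}]^*\cong\langle-\zeta_{m_i}\rangle\times\Z^{(\phi(m_i)/2-1)^*}$. Taking the product over $i$ gives
$$
\Z[\alpha]^*\cong\prod_{i=1}^{r}\langle-\zeta_{m_i}\rangle\times\Z^{\sum_{i=1}^{r}(\frac{\phi(m_i)}2-1)^*},
$$
as claimed.

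**Main obstacle.** There is no deep obstacle here — the proposition is essentially a dictionary translation of Proposition \ref{prop:cinese} via the identification $\Z[\alpha]\cong\Z[x]/(\mu_\alpha(x))$. The only points requiring care are bookkeeping: (a) making sure the factorization of $\mu_\alpha(x)$ is genuinely into \emph{distinct} $\Phi_{m_i}$ with $\mathrm{lcm}(m_i)=n$, which is Lemma \ref{lemma1}; (b) the relabeling argument showing the symmetric hypothesis matches the ordered hypothesis of Proposition \ref{prop:cinese}; and (c) the edge cases $r=1$ and $m_i\le 2$, where the $(\cdot)^*$ notation must be applied consistently. None of these is more than a routine check.
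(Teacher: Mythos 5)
Your proposal is correct and follows essentially the same route as the paper: identify $\Z[\alpha]$ with $\Z[x]/(\Phi_{m_1}(x)\cdots\Phi_{m_r}(x))$, observe that the isomorphism with $\prod_{i=1}^r\Z[\zeta_{m_i}]$ is exactly surjectivity of the CRT map $\psi$, invoke Proposition \ref{prop:cinese}, and read off the unit group from Dirichlet's Unit Theorem. Your extra remarks on the $r=1$ case and on relabeling the $m_i$ are sensible bookkeeping that the paper leaves implicit.
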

\begin{proof}
Consider the following commutative diagram, where the vertical arrows are the obvious isomorphisms
\begin{equation}
\label{commdiag}
\begin{tikzcd}
 \Z[\alpha] \arrow[r, hookrightarrow]& \cm=\prod\limits_{i=1}^r\Z[\zeta_{m_i}]  \\
{\Z[x]}/{( \prod\limits_{i=1}^r\Phi_{m_i}(x))}\arrow[u , "\rotcong"] \arrow[r, hookrightarrow, "\psi"]
 & \prod\limits_{i=1}^r{\Z[x]}/{(\Phi_{m_i}(x))} \arrow[u,"\rotcong"]
\end{tikzcd}
\end{equation}
The diagram shows that $\Z[\alpha]=\cm$ if and only if the CRT map is onto and this is classified in
Proposition \ref{prop:cinese}. The description of $\Z[\alpha]^*$ follows immediately.
%
%
\end{proof}


\begin{example}
{\rm
Let $\cm=\Z[\zeta_3]\times\Z[i]$ and let $\alpha=(\zeta_3, i)\in\cm$. The element $\alpha$ is a unit of order 12, $\mu_\alpha(x)=\Phi_3(x)\Phi_4(x)$ and 
$\Z[\alpha]\cong\Z[x]/(\Phi_3(x)\Phi_4(x))$. By last proposition
$\Z[\alpha]\cong \cm$ and $(\Z[\alpha])^*_{tors}=(\cm^*)_{tors}\cong C_6\times C_4$.
}
\end{example}
\begin{example}
{\rm
Let  $\cm=\Z[\zeta_3]\times\Z[\zeta_9]$ and let $\alpha=(\zeta_3, \zeta_9)\in\cm$. Clearly, $\alpha$ is a unit of order 9 and  $\Z[\alpha]\cong\Z[x]/(\Phi_3(x)\Phi_9(x))$. Proposition \ref{prop:zetaalpha} shows that $\Z[\alpha]\subsetneq \cm$ and it is easy to see  that $(\Z[\alpha])^*_{tors}\cong C_9$, in fact  $(\zeta_3,1)\not\in \Z[\alpha]$.
}
\end{example}

In the following proposition we compute the groups of units of torsion free rings of a particular form which will be useful  in the next section.   Actually, using the results of this section together with those of \S \ref{subs:cyclo} one could prove more general results, substantially  with the same methods, but this would require a greater technical effort. However, this is beyond our scope, so we decided to limit the generality to what is necessary for our application.

\begin{prop}
\label{units_1n}
{\sl
Let $p$ be a  prime and let $l$ be a positive even integer such that $l=l_1p^b$ with $(l_1, p)=1$. Let $a>b$ and let  $\Psi_{p^a, p^b}(x)$ denote the minimal polynomial of $\zeta_{p^a}$ over $\Z[\zeta_{p^b}]$.\footnote[2]{$\Psi_{p^a, p^b}(x)$ is also  the minimal polynomial of $\zeta_{p^a}$ over $\Z[\zeta_{l}]$}
Then 
$$\left(\frac{\Z[\zeta_l][x]}{((x-1)\Psi_{p^a, p^b}(x))}\right)^*\cong C_l\times C_{p^a}\times \Z^g$$ 
where $g=(\frac{\phi(l)}2-1)^*+(\frac{\phi(l_1p^{a})}2-1).$

%
%
}\end{prop}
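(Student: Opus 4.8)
The plan is to realize $R:=\Z[\zeta_l][x]/\bigl((x-1)\Psi_{p^a,p^b}(x)\bigr)$ as a $\Z$-order in a product of two cyclotomic fields, read off its rank from Lemma~\ref{finito}, and then compute its torsion subgroup by describing $\psi(R)$ as a fibre product of $\Z[\zeta_l]$ and $\Z[\zeta_{l_1p^a}]$ over a finite residue ring.

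\emph{The ambient algebra and the rank.} By Lemma~\ref{Psin1}(2), $\Psi_{p^a,p^b}(x)$ is the minimal polynomial of $\zeta_{p^a}$ over $\Q(\zeta_l)$ and $\Psi_{p^a,p^b}(1)$ generates the prime of $\Z[\zeta_{p^b}]$ over $p$; in particular $x-1$ and $\Psi_{p^a,p^b}(x)$ are coprime monic irreducibles over $\Q(\zeta_l)$. Tensoring with $\Q$ and applying the CRT over the field $\Q(\zeta_l)$,
$$Q_R:=R\otimes_\Z\Q\cong\Q(\zeta_l)[x]/\bigl((x-1)\Psi_{p^a,p^b}(x)\bigr)\cong\Q(\zeta_l)\times\Q(\zeta_l,\zeta_{p^a})\cong\Q(\zeta_l)\times\Q(\zeta_{l_1p^a}),$$
a semisimple cyclotomic $\Q$-algebra whose maximal order is $\cm=\Z[\zeta_l]\times\Z[\zeta_{l_1p^a}]$ (Remark~\ref{maxord}). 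Since $(x-1)\Psi_{p^a,p^b}(x)$ is monic, $R$ is a free $\Z$-module of finite rank containing a $\Q$-basis of $Q_R$, hence a $\Z$-order of $Q_R$; choosing the primitive root $\zeta_{l_1p^a}:=\zeta_l\zeta_{p^a}$, the CRT map becomes the injection $\psi\colon R\hookrightarrow\cm$ with $x\mapsto(1,\zeta_{p^a})$, $\zeta_l\mapsto(\zeta_l,\zeta_l)$. By Lemma~\ref{finito}, $\rank R^*=\rank\cm^*=\bigl(\tfrac{\phi(l)}2-1\bigr)^*+\bigl(\tfrac{\phi(l_1p^a)}2-1\bigr)^*$; since $l$ is even one has $l_1p^a\ge3$ (if $p$ is odd then $l_1$ is even, so $l_1p^a\ge6$; if $p=2$ then $a>b\ge1$, so $l_1p^a\ge4$), hence the second summand is $\tfrac{\phi(l_1p^a)}2-1$ and the total is exactly $g$.

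\emph{The torsion subgroup.} Because $R$ and $\cm$ share the same identity, $(R^*)_{tors}$ is exactly the set of roots of unity of $\cm$ lying in $\psi(R)$; and $(\cm^*)_{tors}=\mu_l\times\mu_{l_1p^a}$, the product of the full groups of $l$-th and $(l_1p^a)$-th roots of unity (both $l$ and $l_1p^a$ being even). Put $\delta:=\Psi_{p^a,p^b}(1)$, so $\delta=1-\zeta_{p^b}$ if $b\ge1$ and $\delta=p$ if $b=0$. The standard description of the image of a CRT map gives
$$\psi(R)=\bigl\{(c,\beta)\in\Z[\zeta_l]\times\Z[\zeta_{l_1p^a}]\ :\ \bar c=r(\beta)\ \text{ in }\ S:=\Z[\zeta_l]/\delta\Z[\zeta_l]\bigr\},$$
where $\bar c$ is reduction modulo $\delta$ and $r\colon\Z[\zeta_{l_1p^a}]=\Z[\zeta_l][\zeta_{p^a}]\to S$ is the ring map with $\zeta_{p^a}\mapsto1$, $\zeta_l\mapsto\bar\zeta_l$. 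Writing $\Z[\zeta_l]\cong\Z[\zeta_{l_1}]\otimes_\Z\Z[\zeta_{p^b}]$ and using that the relevant prime of $\Z[\zeta_{p^b}]$ over $p$ has residue field $\F_p$ (Lemma~\ref{lemmawashington}(2)), one identifies $S\cong\F_p[\zeta_{l_1}]$, in which $\bar\zeta_l$ is the image of a primitive $l_1$-th root of unity and hence has order exactly $l_1$ (as $p\nmid l_1$). Since $r(\zeta_{l_1p^a})=r(\zeta_l\zeta_{p^a})=\bar\zeta_l$, a pair $(\zeta_l^{\,i},\zeta_{l_1p^a}^{\,j})$ lies in $\psi(R)$ precisely when $\bar\zeta_l^{\,i}=\bar\zeta_l^{\,j}$, i.e.\ $i\equiv j\pmod{l_1}$. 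Splitting $\Z/l\cong\Z/l_1\times\Z/p^b$ and $\Z/(l_1p^a)\cong\Z/l_1\times\Z/p^a$ via the CRT, this group is isomorphic to $C_{l_1}\times C_{p^b}\times C_{p^a}\cong C_l\times C_{p^a}$, and combined with the rank this yields $R^*\cong C_l\times C_{p^a}\times\Z^g$.

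\emph{Main difficulty.} The technical crux is the torsion computation: one must correctly identify the gluing ideal $\delta\Z[\zeta_l]$, compute the glued residue ring $S\cong\F_p[\zeta_{l_1}]$, and verify that the torsion generators of the two factors of $\cm$ reduce to the same element of $S$ and that this element has order exactly $l_1$ --- precisely the phenomenon that collapses the a priori torsion group $\mu_l\times\mu_{l_1p^a}$ down to $C_l\times C_{p^a}$. The degenerate case $b=0$ (where $\Psi_{p^a,p^b}=\Phi_{p^a}$, $\delta=p$, $l=l_1$) should be checked to fit the same description.
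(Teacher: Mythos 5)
Your proof is correct and follows essentially the same route as the paper: embed $R$ via the CRT map into $\cm=\Z[\zeta_l]\times\Z[\zeta_{l_1p^a}]$, obtain the rank from Lemma~\ref{finito}, and detect which roots of unity of $\cm$ lie in the image via the congruence modulo the ideal generated by $\Psi_{p^a,p^b}(1)$. The only difference is cosmetic and sits in the last step: you identify the glued residue ring as $\F_p[x]/(\Phi_{l_1}(x))$ and read off that $\bar\zeta_l$ has order exactly $l_1$ there, whereas the paper reduces a general torsion unit modulo the subgroup of trivial units and invokes Lemmas~\ref{Psin1} and~\ref{lemmawashington} to decide when $\zeta_l^{\,i-j}-1$ lies in the prime over $p$ --- the two computations are equivalent.
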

\begin{proof}
The  ring $\Z[\zeta_l][x]/((x-1)\Psi_{p^a, p^b}(x))$ embeds into the maximal order $\cm=\Z[\zeta_l]\times\Z[\zeta_l][\zeta_{p^a}]\cong\Z[\zeta_l]\times\Z[\zeta_{l_1p^{a}}]$ via the CRT map:
$$\psi\colon\Z[\zeta_l][x]/((x-1)\Psi_{p^a, p^b}(x))\to\Z[\zeta_l]\times \Z[\zeta_l][x]/(\Psi_{p^a, p^b}(x))\cong\cm,$$
 then 
$$\rank\left(\left(\frac{\Z[\zeta_l][x]}{((x-1)\Psi_{p^a, p^b}(x))}\right)^*\right)=(\frac{\phi(l)}2-1)^*+(\frac{\phi(l_1p^{a})}2-1).$$

As for the torsion units, let
$$T=\psi\left(\left(\frac{\Z[\zeta_l][x]}{((x-1)\Psi_{p^a, p^b}(x))}\right)^*_{tors}\right).$$ 
Clearly, $T$ is the subgroup of 
$U=\langle\zeta_{l}\rangle\times\langle\zeta_{l_1p^{a}}\rangle\cong C_{l}\times C_{l_1p^{a}}$ made by the units belonging to ${\rm Im}(\psi)=\{(a(1),a(\zeta_{p^a}))\mid a(x)\in\Z[\zeta_l][x]\}$.
We will show that all of them are 
  {\em trivial units}, namely they belong to the subgroup $T_0$ generated by $\psi(\zeta_l)=(\zeta_l,\zeta_l)$ and $\psi(x)=(1,\zeta_{p^a})$.
We note that $T_0\cong C_l\times C_{p^a}$,\ since  
$\langle(\zeta_l,\zeta_l)\rangle\cap\langle (1,\zeta_{p^a})\rangle=(1,1)$.

Let $u=(\zeta_l^i, \zeta_l^j\zeta_{p^a}^k)\in U$, then  $u$ is equivalent to $v=(\zeta_l^{i-j},1)$ modulo $T_0$,
so $u\in T$ if and only if  $v-(1,1)=(\zeta_l^{i-j}-1, 0)\in{\rm Im}(\psi)$.

This means that there exists 
$a(x)\in\Z[\zeta_l][x]$ such that  
$$\zeta_l^{i-j} -1=a(1)\Psi_{p^a,p^b}(1).$$
By  Lemma \ref{Psin1}, $(\Psi_{p^a,p^b}(1))= P_b$ where $P_b=(1-\zeta_{p^b})$ if $b\ge1$ and $P_0=(p)$, hence  last equation implies
\begin{equation}
\label{zetai-j}
\zeta_l^{i-j} -1\in P_b.
\end{equation}

Let  $\nu=l/(l, i-j)$, then 
 \eqref{zetai-j} can be rewritten as $\zeta_\nu -1\in P_b$ and, using Lemma \ref{lemmawashington}, we get that this holds  if and only if  $\nu\vert p^b$.

If $b\ge1$, $\nu\mid p^b$ exactly when  $i\equiv j\pmod{l_1}$.  Let $j=i+hl_1$, then $u=(\zeta_l^i,\zeta_l^i\zeta_{p^b}^h\zeta_{p^a}^k)$ and clearly this element is  in $T_0$.

If $b=0$ equation \eqref{zetai-j} can hold only for $p=2$, so $\nu=1$ or $2$ and $i\equiv j\pmod{l_12^{b-1}}$. Letting $j=i+tl_12^{b-1}$ ($t=0,1$) the unit $u=(\zeta_l^i,(-1)^t\zeta_l^i\zeta_{2^a}^k)=(\zeta_l^i,\zeta_l^i\zeta_{2^a}^{k+t2^{a-1}})$ and  clearly it belongs to $T_0$. 

This proves that  $T=T_0$ and hence it has the required decomposition.
\end{proof}

\section{Torsion-free rings: the classification theorem}
\label{sec:torfree}

 Our aim is to classify the abelian and finitely generated  groups which arise as groups of units of  torsion-free rings. This question is twofold: on the one hand, we have to establish which finite groups $T$  (up to isomorphism) can be the torsion subgroup of $A^*$ when $A$ is a torsion-free ring. On the other hand, we have to determine  the possible values of the rank, $g(A)$, of $A^*$ when $(A^*)_{tors}\cong T$. Theorem \ref{torfree} gives a complete answer to both questions.

Let $T$ be a  finite abelian group of even order. In this section we will use the following notation  for the decomposition of $T$ as a product of cyclic factors that we fix once and for all. We will refer to this notation as to the ``standard'' notation for $T$, or we will call \eqref{eqT} the ``standard'' decomposition of $T$

\smallskip

{\bf Standard notation for $T$.} Let $\epsilon=\epsilon(T)$  be the minimum exponent of 2 in the
decomposition of  $T$  as direct sum of cyclic groups.
Then $T$ can be uniquely written as 
\begin{equation}
\label{eqT}
T\cong\prod_{i=1}^s C_{p_i^{a_i}}\times\prod_{\iota=1}^\rho C_{2^{\epsilon_\iota}}\times C_{2^{\epsilon}}^\sigma
\end{equation}
where $s,\rho\ge0$, $\sigma\ge1$ and\\
- for all $i=1,\dots,s$ the $p_i$'s are odd prime numbers not necessarily distinct and $a_i\ge1$;\\
- $\epsilon=\epsilon(T)\ge1$ and $\epsilon_\iota>\epsilon$ for all $\iota=1,\dots,\rho$.\\
Assume that $p_1,\dots, p_{s_0}$ are the distinct primes in the set $\{p_1,\dots, p_s\}$. Denoting by $T_{p_i}$ the $p_i$-Sylow of $T$, for $i=1,\dots, s_0$, and by $T_2$ its 2-Sylow, we can also write $T$ as 
\begin{equation}
\label{eqTp}
T\cong \prod_{i=1}^{s_0}T_{p_i}\times T_2.
\end{equation}
\smallskip

 As usual, we call the decomposition in \eqref{eqTp} the Sylow decomposition.

\begin{theo}
\label{torfree}
{\sl
 Let $T$ be a finite abelian group of even order. Referring to the ``standard" notation for $T$, we define
\begin{equation}
\label{eqgT}g(T)=\sum_{i=1}^s(\frac{\phi(2^{\epsilon} p_i^{a_i})}2-1)+ \sum_{\iota=1}^\rho(\frac{\phi(2^{\epsilon_\iota})}2-1)+  c(T)\end{equation}
where 
$$c(T)=\begin{cases}({\sigma-s})(\frac{\phi(2^{\epsilon} )}2-1)^*& {\rm for}\ s<\sigma\\
0 & {\rm for }\ s_0\le \sigma\le s\\
(\frac{\phi(2^{\epsilon} )}2-1)^*& {\rm for }\ \sigma<s_0.
\end{cases}$$

Then there exists a torsion free ring $A$ with
$$A^*\cong T\times\Z^r$$
 if and only if  $r\ge g(T).$
}
\end{theo}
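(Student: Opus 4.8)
The plan is to split the statement into the lower bound (if $A$ is torsion free with $(A^*)_{tors}\cong T$ then $\mathrm{rank}(A^*)\ge g(T)$) and the realizability direction (for every $T$ there is a torsion free ring with $A^*\cong T\times\Z^{g(T)}$), after which higher ranks follow by a Laurent-polynomial or localization argument. First I would reduce to the integral case: by Lemma \ref{minimum} and Remark \ref{rem-differenze} the torsion subgroup and the minimal possible rank are both attained on the subring $B=\Z[(A^*)_{tors}]$, which is finitely generated and integral over $\Z$; so $Q_B$ is a finite-dimensional $\Q$-algebra and by Proposition \ref{QB} it is a product of cyclotomic fields $\prod_{i=1}^t\Q(\zeta_{n_i})$. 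By Remark \ref{maxord} and Corollary \ref{unitsM}, $B$ sits in the maximal order $\cm=\prod\Z[\zeta_{n_i}]$, the rank of $B^*$ equals $\sum_i(\tfrac{\phi(n_i)}2-1)^*$, and $(B^*)_{tors}\cong T$ embeds into $U=\prod_i\langle-\zeta_{n_i}\rangle$.

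For the lower bound I would argue as the excerpt announces via Propositions \ref{gT} and \ref{nom0T}: one must understand which tuples $(n_1,\dots,n_t)$ are $T$-admissible, i.e. for which products of cyclotomic fields some order can have torsion units exactly $T$. The key point is that $T\hookrightarrow\prod\langle-\zeta_{n_i}\rangle$ forces, for each cyclic factor $C_{p_i^{a_i}}$ of $T$ with $p_i$ odd, some $n_j$ divisible by $2^{\epsilon}p_i^{a_i}$ (the factor of $2$ being unavoidable because $-1$ is always a unit, and $2^{\epsilon}$ rather than just $2$ because the $2$-part of the torsion cannot drop below $\epsilon$ in any order of a cyclotomic product — this uses Proposition \ref{prop:zetaalpha} / \ref{units_1n} to see that merging an odd prime power with a $2$-power without creating extra torsion is only possible at the cost stated). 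Summing the contributions $\tfrac{\phi(2^{\epsilon}p_i^{a_i})}2-1$, $\tfrac{\phi(2^{\epsilon_\iota})}2-1$ and the correction term $c(T)$, and using the superadditivity inequalities of Lemma \ref{disuguaglianze} to show that bundling several primes into one $n_j$ never lowers the total rank, yields $\mathrm{rank}(A^*)\ge g(T)$. The three cases in $c(T)$ record how many copies of $C_{2^\epsilon}$ are ``free'' (already supplied as the $2$-part of a factor $2^{\epsilon}p_i^{a_i}$) versus how many must be paid for separately, and when $\sigma<s_0$ one extra copy of $\tfrac{\phi(2^\epsilon)}2-1$ is forced because at least one odd prime must be split off on its own.

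For realizability I would, given $T$ in standard form, build an order $A$ inside the maximal order $\cm_T$ of a carefully chosen $T$-admissible cyclotomic $\Q$-algebra with $\mathrm{rank}(\cm_T^*)=g(T)$; by Lemma \ref{finito} any order of $\cm_T$ has the same rank $g(T)$, so the whole task is to choose the order so that its torsion units are exactly $T$ and not the a priori larger $(\cm_T^*)_{tors}$ (which differs from $T$ only in the $2$-Sylow). The building blocks are the rings of Proposition \ref{units_1n} and Proposition \ref{prop:zetaalpha}: for each distinct odd prime one uses a ring of type $\Z[\zeta_l][x]/((x-1)\Psi_{p^a,p^b}(x))$ to glue the $p$-part of $T$ onto a $2$-power while keeping the $2$-torsion down to $C_{2^\epsilon}$, then assembles these via fibre products / the CRT so that the $2$-components are identified compatibly, picking up the factors $C_{2^{\epsilon_\iota}}$ and the $\sigma$ copies of $C_{2^\epsilon}$ along the way. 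Finally $r>g(T)$ is handled by passing to $A[x_1,\dots,x_{r-g(T)},x_1^{-1},\dots,x_{r-g(T)}^{-1}]$, which is still torsion free, has the same torsion units, and rank exactly $r$.

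The main obstacle is the sharpness of the lower bound in the ``deficient $2$-Sylow'' regime — i.e. proving Proposition \ref{nom0T}, that when $T_2$ has too few cyclic factors of the minimal order $2^\epsilon$ the natural bound $\sum(\tfrac{\phi(2^\epsilon p_i^{a_i})}2-1)+\sum(\tfrac{\phi(2^{\epsilon_\iota})}2-1)$ genuinely cannot be met and the correction $c(T)$ is forced. This requires a delicate analysis of how torsion units propagate through the CRT map \eqref{CRTmap}: one must show that any attempt to realize $T$ inside a product $\prod\Z[\zeta_{n_i}]$ with fewer components either introduces unwanted $2$-torsion (so the torsion group strictly contains $T$) or fails to be surjective in the way needed, and that the cheapest legal configuration is exactly the one giving $g(T)$. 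Matching this obstruction precisely with the explicit construction — so that the constructed order realizes $T$ and not more — is where the bulk of the technical work lies.
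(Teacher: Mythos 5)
Your plan is correct and follows essentially the same route as the paper: reduction to $\Z[(A^*)_{tors}]$ and its cyclotomic $\Q$-algebra, the lower bound via $T$-admissible maximal orders (Propositions \ref{gT} and \ref{nom0T}, with Lemma \ref{disuguaglianze} handling the bundling of primes), realizability via orders of $\cm_T$ built from Proposition \ref{units_1n}, and Laurent polynomials for larger ranks. You also correctly single out the $\sigma<s_0$ obstruction as the delicate point, which is exactly where the paper's proof concentrates its effort.
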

As a particular case of this theorem we re-obtain the classification of finite groups which occur as groups of units of torsion-free rings, already found in \cite[Thm 4.1]{dcdBLMS}.
\begin{corollary}
\label{cor-torsionfree}
{\sl
The finite abelian groups which are the groups of units of  {torsion-free} rings are all those of the form
$$C_2^a\times C_4^b\times C_3^c$$
where $a,b,c\in\N$, $a+b\ge1$ and $a\ge1$ if $c\ge1.$
}
\end{corollary}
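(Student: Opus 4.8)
The plan is to obtain the corollary as the special case $r=0$ of Theorem \ref{torfree}. A finite abelian group $T$ is the group of units of a torsion-free ring if and only if $T\times\Z^0$ is, and since $g(T)$ is manifestly a sum of nonnegative quantities, Theorem \ref{torfree} reduces the problem to deciding for which even-order $T$ one has $g(T)=0$. (Every finite group of units of a torsion-free ring, being of characteristic zero, contains $-1$ of order exactly $2$, so the even-order hypothesis costs nothing; and $|C_2^a\times C_4^b\times C_3^c|$ is even exactly when $a+b\ge 1$.)

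The heart of the proof is then a short arithmetic analysis of the equation $g(T)=0$ in the standard notation \eqref{eqT}--\eqref{eqgT}. First I would check that each of the three summands in \eqref{eqgT} is nonnegative: $\phi(2^\epsilon p_i^{a_i})=\phi(2^\epsilon)\phi(p_i^{a_i})\ge 2$ since $p_i$ is an odd prime, $\tfrac{\phi(2^{\epsilon_\iota})}{2}-1=2^{\epsilon_\iota-2}-1\ge 0$ since $\epsilon_\iota>\epsilon\ge 1$, and $c(T)\ge 0$ by the $(\cdot)^*$-convention. Hence $g(T)=0$ forces each summand to vanish separately. I would then extract the constraints one at a time: $\tfrac{\phi(2^\epsilon p_i^{a_i})}{2}=1$ forces $\phi(2^\epsilon)=1$ and $\phi(p_i^{a_i})=2$, i.e. $\epsilon=1$ and every odd cyclic factor equal to $C_3$; $\tfrac{\phi(2^{\epsilon_\iota})}{2}=1$ forces $\epsilon_\iota=2$ for every $\iota$, and in particular $\epsilon=1$ whenever $\rho\ge 1$; and once $\epsilon=1$ one has $(\tfrac{\phi(2)}{2}-1)^*=0$, so $c(T)=0$ automatically in each of its three clauses, while if $s=\rho=0$ then $T\cong C_{2^\epsilon}^\sigma$ and $c(T)=\sigma(\tfrac{\phi(2^\epsilon)}{2}-1)^*=0$ forces $\epsilon\le 2$.

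Assembling the cases yields precisely the list in the statement: if $s\ge 1$ then $\epsilon=1$, all odd factors are $C_3$, and the remaining $2$-part is $C_2^\sigma\times C_4^\rho$ with $\sigma\ge 1$, giving $C_2^a\times C_4^b\times C_3^c$ with $a\ge 1$ and $c\ge 1$; if $s=0$ and $\rho\ge 1$ then again $\epsilon=1$ and $T\cong C_2^\sigma\times C_4^\rho$ with $\sigma\ge 1$; and if $s=\rho=0$ then $T\cong C_2^\sigma$ or $T\cong C_4^\sigma$. In all cases $a+b\ge 1$ and $c\ge 1$ implies $a\ge 1$; conversely a direct substitution into \eqref{eqgT} (using $\phi(6)=\phi(4)=2$) shows $g(T)=0$ for every such $T$, so each is realizable by Theorem \ref{torfree}. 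No genuine difficulty arises here; the only point requiring care is the interaction of the three clauses defining $c(T)$ with the constraint $\epsilon=1$ imposed by the other two summands, and since $\epsilon=1$ annihilates $c(T)$ in every clause this interaction is benign.
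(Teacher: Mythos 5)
Your proposal is correct and follows essentially the same route as the paper: both reduce the corollary to the equation $g(T)=0$ via Theorem \ref{torfree}, observe that each summand in \eqref{eqgT} is nonnegative and must vanish separately, and read off the constraints $\epsilon=1$ (or $\epsilon=2$ with $s=\rho=0$), $p_i^{a_i}=3$, and $\epsilon_\iota=2$ in the standard notation. Your write-up merely spells out the nonnegativity checks and the converse substitution in more detail than the paper does.
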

\begin{proof}
A finite abelian group $T$ of even order  is the group of units of a torsion-free ring if and only if $g(T)=0$. In the ``standard" notation for $T$, this means that $\frac{\phi(2^{\epsilon} p_i^{a_i})}2-1=0$ for all $i=1,\dots,s$, $\frac{\phi(2^{\epsilon_\iota})}2-1=0$ for each $\iota=1,\dots,\rho$ and $c(T)=0$.
If  $s=0$ this gives $\epsilon=1$ and $\epsilon_\iota\le2$ for all $\iota$,  or $\epsilon=2$ and $\rho=0 $.
If $s>0$, then $p_i=3$ for all $i$, $\epsilon=1$ and $\epsilon_\iota\le2$ for all $\iota$.
\end{proof}
\smallskip

Before proceeding with the proof we point out that all the difficulties relative  to the realization of a group $T$ come from its 2-torsion part.
%
%
%
The following examples show a phenomenon which at first sight may seem paradoxical: it may happen that a group $T$ has a subgroup $T'$ for which $g(T)<g(T')$.
\begin{example}
\label{ordine80}
{\rm 
Let $T=C_2\times C_8\times C_5$. In this case $\epsilon=1$ and $g(T)=2$: in fact, choosing $A$ equal to  the maximal order $\cm=\Z[\zeta_8]\times\Z[\zeta_5]$  we have $A^*\cong T\times\Z^2$.
}
\end{example}
\begin{example}
\label{ordine40}
{\rm
Let  $T\cong C_8\times C_5$ and let $A$ be  a torsion-free ring such that $(A^*)_{tors}\cong T$. Then,  $A$ contains a unit $\alpha$ of order 8 and a unit $\beta$ of order 5.  
Then in the notation of Lemma \ref{lemma1}, we have that $\Phi_5(x)\mid\mu_\alpha(x)$ and   $\Phi_8(x)\mid\mu_\beta(x)$, so  ${\mathcal M}$, the  maximal order of $A$, 
 must contain a direct factor with a subring isomorphic to  $\Z[\zeta_{8}]$ and one which contains $\Z[\zeta_{5}]$. There are two minimal  possibilities: ${\mathcal M}=\Z[\zeta_{8}]\times\Z[\zeta_{5}]$ or ${\mathcal M}=\Z[\zeta_{40}]$. The first possibility has to be excluded since each  order of a maximal order containing  $\Z[\zeta_{8}]\times\Z[\zeta_{5}]$ has at least 3 units of order 2 (this will be clear after Lemma   \ref{epsilon}).
In this case Theorem \ref{torfree} shows that $g(T)= \phi(40)/2-1=7$. 
}
\end{example}

The proof of Theorem \ref{torfree} is quite long. For the convenience of the reader, we  separate the ``only if'' part and the ``if" part.
Both parts require a number of auxiliary results that we will prove separately, in order to make it easier to follow the main argument.

\subsection{Proof of Theorem \ref{torfree}: the ``only if" part}

Let  $A$ be a torsion free ring with finitely generated group of units, such that $(A^*)_{tors}\cong T$. We have to prove the $\rank(A^*)\ge g(T)$.

To this aim,  by Lemma \ref{minimum}, we can assume that   $A=\Z[(A^*)_{tors}]$ and 
Proposition \ref{QB} says that there exist $n_1,\dots,n_t$ such that $Q_A=A\otimes_\Z\Q\cong\prod_{j=1}^t\Q(\zeta_{n_j})$. Now, by Lemma \ref{finito}, the rank of $A^*$ is equal to the  rank of the maximal order $\cm_A=\prod_{j=1}^t\Z[\zeta_{n_j}]$ which is known by Dirichlet's Unit Theorem.  

In order that  $\cm=\prod_{j=1}^t\Z[\zeta_{n_j}]$ contains an order $\mathcal O$ such that $(\mathcal O^*)_{tors}\cong T$, 
the $n_j$'s must fulfill  the following  necessary conditions 
(see  Lemma \ref{epsilon} below): 
\begin{itemize}
\item[\rm i)] $t\ge \rho+\sigma$;
\item[\rm ii)] $2^\epsilon\mid n_j$ for all $j=1,\dots,t$;
\item[\rm iii)] for each $i=1,\dots, s$ there exists an index $j_i\in\{1,\dots, t\}$ such that   $p_i^{a_i}| n_{j_i}$; moreover, $j_i\ne j_h$ if $p_i=p_h$ and $i\ne h$;
\item[\rm iv)] for each $\iota=1,\dots, \rho$ there exists an index $l_\iota\in\{1,\dots, t\}$ such that   $2^{\epsilon_\iota}| n_{l_\iota}$ and $l_\iota\ne l_h$ if $\iota\ne h.$
\end{itemize}
We will say that the maximal order $\cm=\prod_{j=1}^t\Z[\zeta_{n_j}]$ is  {\it $T$-admissible} if $\{n_1,\dots, n_t\}$ fulfills the conditions (i)-(iv), where the parameters are those of the ``standard'' decomposition of $T$.  

Define
\begin{equation}
\label{m0T}
\cm_{0,T}=\prod_{i=1}^s\Z[\zeta_{2^{\epsilon}p_i^{a_i}}]\times\prod_{\iota=1}^\rho\Z[\zeta_{2^{\epsilon_\iota}}]\times\Z[\zeta_{2^{\epsilon}}]^d,
\end{equation}
where $d=\max\{\sigma-s, 0\}$.  
$\cm_{0,T}$ is $T$-admissible and in Proposition \ref{gT} we prove that $\cm_{0,T}^*$ has  minimum rank among the groups of units of all $T$-admissible maximal orders.
This ensures that 
$$\rank(A^*)=\rank(\cm_A^*)\ge\rank(\cm_{0,T}^*).$$
Now, 
$$\rank(\cm_{0,T}^*)= \sum_{i=1}^s(\frac{\phi(2^{\epsilon} p_i^{a_i})}2-1)+ \sum_{\iota=1}^\rho(\frac{\phi(2^{\epsilon_\iota})}2-1) +d (\frac{\phi(2^{\epsilon} )}2-1)^*,$$
hence
$$\rank(\cm_{0,T}^*)=
\begin{cases}
g(T)&{\rm for}\ \sigma\ge s_0\\
g(T)-(\frac{\phi(2^{\epsilon} )}2-1)^*&{\rm for}\ \sigma< s_0.
\end{cases}
$$
If $ \sigma\ge s_0$ or if $\epsilon=1$ we get the required bound on $\rank(A^*)$.

On the other hand, by Proposition \ref{nom0T} if $\sigma< s_0$, then $\cm_{0,T}$ does not contain any order $A$ with $(A^*)_{tors}\cong T$, so $\cm_A\ne\cm_{0,T}$. 
Now, by Proposition \ref{gT}, for $\epsilon>1$, $\cm_{0,T}$ is the only $T$-admissible maximal order of minimum rank , 
 hence, if   $\sigma< s_0$ and $\epsilon>1$, then $\rank(A^*)>\rank(\cm_{0,T}^*)$ and, using again Proposition \ref{gT}, we get  
 $$\rank(A^*)\ge\rank(\cm_{0,T}^*)+(\frac{\phi(2^{\epsilon} )}2-1)^*=g(T).$$ 
 \qed
\medskip

We now state and prove  the results quoted above.
\begin{lemma}
\label{epsilon}
{\sl ,
Let $\cm=\prod_{j=1}^t\Z[\zeta_{n_j}]$. 
If $\cm$ contains a subring $A$ with $(A^*)_{tors}\cong T$, then $\cm$ is $T$-admissible. 
}
\end{lemma}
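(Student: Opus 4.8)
The plan is to unravel what it means for an order $A$ inside $\cm=\prod_{j=1}^t\Z[\zeta_{n_j}]$ to have torsion group of units isomorphic to $T$, by comparing $(A^*)_{tors}$ with $(\cm^*)_{tors}$. First I would recall from Corollary \ref{unitsM} (or directly from Dirichlet) that $(\cm^*)_{tors}\cong U=\prod_{j=1}^t\langle-\zeta_{n_j}\rangle$, and that $\{\pm1\}\subseteq A^*\subseteq\cm^*$, so $(A^*)_{tors}$ is a subgroup of $U$. Since $T\cong(A^*)_{tors}\hookrightarrow U$, and $U\cong\prod_{j=1}^t C_{N_j}$ with $N_j=n_j$ if $n_j$ is even and $N_j=2n_j$ if $n_j$ is odd, every condition (i)–(iv) will follow from purely group-theoretic constraints on subgroups of $U$, once we also exploit that $A$ is a \emph{subring} (not merely a subgroup), which forces certain ``diagonal'' torsion units to lie in $A$.

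The key steps, in order. \emph{Step 1 (condition ii).} The diagonal element $-1=(-1,\dots,-1)$ lies in $A$ since $A$ is a ring with $1$; more generally, for any $d\mid\gcd(n_1,\dots,n_t)$ (interpreting $n_j$ suitably when odd) the diagonal $d$-th root of unity $(\zeta,\dots,\zeta)$ need not be in $A$, but what we actually need is: $T\hookrightarrow U$ has an element of order $2^\epsilon$ in \emph{every} coordinate after a suitable embedding — this is where the definition of $\epsilon=\epsilon(T)$ as the \emph{minimal} $2$-exponent enters. Concretely, write out the standard decomposition \eqref{eqT}; each of the $\rho+\sigma$ cyclic $2$-factors of $T$ of order $\ge 2^\epsilon$ must embed into distinct coordinates $\langle-\zeta_{n_j}\rangle$ (a single coordinate $C_{N_j}$ contains a unique subgroup of each order, so it cannot absorb two independent cyclic $2$-subgroups), which simultaneously gives (i) $t\ge\rho+\sigma$ and, reading off the required orders, that $2^{\epsilon_\iota}\mid N_{l_\iota}$ for the $\rho$ larger factors (condition iv) and $2^\epsilon\mid N_j$ for the remaining $\sigma$ of them. \emph{Step 2 (condition ii, uniformly).} To get $2^\epsilon\mid n_j$ for \emph{all} $j$, not just $\sigma$ of them, use the ring structure: if some coordinate $n_{j_0}$ had $2$-adic valuation $<\epsilon$ in $N_{j_0}$, consider the torsion unit $u\in A$ of maximal $2$-power order; its projections generate, together with $\pm1\in A$, a diagonal obstruction — more carefully, one shows that any torsion subgroup of $U$ that is realized by a \emph{subring} and contains a copy of $C_{2^\epsilon}^{\rho+\sigma}$ must have $2$-rank at most $t$ with each coordinate contributing, and a coordinate with too small a $2$-part cannot be ``wasted''. (This is the subtle point; see below.) \emph{Step 3 (conditions iii).} For each odd prime power $p_i^{a_i}$ appearing in $T$, the corresponding cyclic factor embeds into some $\langle-\zeta_{n_{j_i}}\rangle$, forcing $p_i^{a_i}\mid n_{j_i}$; and when $p_i=p_h$ with $i\ne h$ the two factors $C_{p_i^{a_i}}$, $C_{p_h^{a_h}}$ are independent, hence must land in distinct coordinates, giving $j_i\ne j_h$. \emph{Step 4 (condition iv, distinctness).} Same independence argument for the $\rho$ large $2$-factors gives $l_\iota\ne l_h$.

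The main obstacle I anticipate is Step 2: passing from ``$\sigma$ of the coordinates have $2$-part divisible by $2^\epsilon$'' to ``\emph{all} $t$ coordinates do.'' The point is that if one coordinate $\langle-\zeta_{n_{j_0}}\rangle$ had a strictly smaller $2$-part, one might think it simply contributes nothing to the $2$-torsion and is harmless; but because $A$ is a subring containing $1$, the image of $A$ in that coordinate is itself an order in $\Z[\zeta_{n_{j_0}}]$, whose unit group contains $\{\pm1\}$, and — crucially — the diagonal unit $(-1,\dots,-1)$ together with any torsion unit of $A$ whose $j_0$-component is nontrivial forces the $j_0$-component to have order dividing $2\cdot(\text{odd})$, which then clashes with the requirement that the $C_{2^\epsilon}^\sigma$ part of $T$ embed \emph{compatibly} across coordinates respecting the ring's diagonal. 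I would handle this by a counting/projection argument: project the copy of $T$ inside $U$ onto each factor and use that the composite of the inclusion $A\hookrightarrow\cm$ with the $j_0$-th projection is a ring map, so $\pm1$ are in the image; combined with $\epsilon$ being the \emph{minimum} $2$-exponent in $T$, a coordinate violating (ii) would let us construct a torsion unit of $A$ of order not dividing the exponent of $T$, or reduce the $2$-rank below $\rho+\sigma$, either way a contradiction. If the author's argument is shorter, it likely just cites the structure of subgroups of $U$ plus the observation that $-1\in A$ pins down the even part in every slot; I would present the above as the conceptual skeleton and fill in the coordinate-projection details routinely.
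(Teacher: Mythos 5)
Your Steps 1, 3 and 4 — counting the cyclic components of each $q$-Sylow of $\prod_j\langle-\zeta_{n_j}\rangle$ to get $t\ge\rho+\sigma$, and noting that each cyclic factor $C_{q^k}$ of $T$ must land in a coordinate whose $q$-Sylow is cyclic of order at least $q^k$, with independent factors for the same $q$ in distinct coordinates — are exactly the paper's argument for (i), (iii) and (iv). The problem is Step 2, i.e.\ condition (ii), which you correctly flag as the subtle point but never actually prove. Your two proposed mechanisms (``a torsion unit of order not dividing the exponent of $T$'' or ``2-rank drops below $\rho+\sigma$'') do not work: take $T\cong C_4$ (so $\epsilon=2$, $\sigma=1$, $\rho=0$) and $\cm=\Z[\zeta_4]\times\Z$, whose torsion units form $C_4\times C_2$. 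A copy of $C_4$ sits inside $C_4\times C_2$ with the second coordinate having $2$-part only $2<2^\epsilon$, without violating any rank count and without producing elements of forbidden order; the only obstruction is that no such copy contains $(-1,-1)$ as the square of one of its elements. A purely group-theoretic statement about subgroups of $U$ of the shape you describe is therefore false, and the ``diagonal obstruction'' you allude to is never made precise.

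The paper's actual argument for (ii) is short and you should make it explicit: since $\epsilon$ is the \emph{minimum} exponent of $2$ in the decomposition \eqref{eqT}, every element of order $2$ in $T$ lies in $T^{2^{\epsilon-1}}$ (each cyclic $2$-factor has order at least $2^\epsilon$, so its unique involution is a $2^{\epsilon-1}$-th power there). Applying this to the involution $-1=(-1,\dots,-1)\in A^*$ gives $\gamma=(\gamma_1,\dots,\gamma_t)\in (A^*)_{tors}\subseteq\prod_j\langle-\zeta_{n_j}\rangle$ with $\gamma_j^{2^{\epsilon-1}}=-1$ for every $j$; hence $\ord(\gamma_j)=2^\epsilon$ in \emph{every} coordinate, which forces $2^\epsilon\mid n_j$ for all $j$. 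This is the sense in which ``$-1\in A$ pins down the even part in every slot,'' but the driver is the divisibility of $-1$ by $2^{\epsilon-1}$ inside $T$, not the ring structure of the projections (indeed the paper uses $A$ only as a subgroup containing $-1$ here). Without this step your proposal has a genuine gap.
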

\begin{proof} For each prime $q$, the $q$-Sylow subgroup of $\cm^*$ is the direct product of the (cyclic) $q$-Sylow subgroups of its cyclic factors $\langle\zeta_{n_j}\rangle$, hence every of its $q$-Sylow has at most $t$ cyclic components. Looking at the $2$-Sylow of $T$ we get $t\ge\sigma+\rho$, proving (i). Moreover, if $T$ has an element of order $q^k$, for some $k\ge1$, then the $q$-Sylow of $\cm^*$ has a cyclic component of order at least $q^k$, namely, $q^k|n_j$ for some $j\in\{1,\dots,t\}$; this proves the first part of (iii) and (iv). The last part of these statements follows  by noticing that the $q$-Sylow of $\langle\zeta_{n_j}\rangle$ is cyclic.

We are now left to prove (ii). 
By identifying $A$ with its image in $\prod_{j=1}^t\Z[\zeta_{n_j}]$, we have that the opposite $(-1,\dots,-1)$  of the identity is an element of order 2 in $(A^*)_{tors}= T$ which is in turn a subgroup of $\prod_{j=1}^t\langle\zeta_{n_j}\rangle$. Now, the $2$-Sylow of $(A^*)_{tors}$
is isomorphic to $C_{2^{\epsilon}}^\sigma\times\prod_{\iota=1}^\rho C_{2^{\epsilon_\iota}}$  and all the elements of order 2 of such a group belong to the subgroup $(C_{2^{\epsilon}}^{2^{\epsilon-1}})^\sigma\times\prod_{\iota=1}^\rho C_{2^{\epsilon_\iota}}^{2^{\epsilon_\iota-1}}$, hence they are $2^{\epsilon-1}$-powers since $\epsilon_\iota>\epsilon$ for all $\iota$.
In particular,  
$$(-1,\dots,-1) = \gamma^{2^{\epsilon-1}}=(\gamma_1^{2^{\epsilon-1}},\dots,\gamma_t^{2^{\epsilon-1}})$$ 
with $\gamma_j\in\langle\zeta_{n_j}\rangle$, $\forall\ j$. It follows that $\ord(\gamma_j)={2^\epsilon}$ since  $\ord(\gamma_j)\mid{2^\epsilon}$  and $\ord(\gamma_j)\nmid2^{\epsilon-1}$, 
 so $2^\epsilon | n_j$ for all $j$.
 \end{proof}
 \begin{remark}
 \label{rem-algebra}
{\rm
According to point (ii) of the definition of $T$-admissible maximal order, each $T$-admissible maximal order is a $\Z[\zeta_{2^\epsilon}]$-algebra.
}
\end{remark} 
 \begin{prop}
\label{gT}
{\sl 
Let $\cm=\prod_{j=1}^t\Z[\zeta_{n_j}]$ be $T$-admissible. Then, 
$$\rank(\cm^*)\ge  \sum_{i=1}^s(\frac{\phi(2^{\epsilon} p_i^{a_i})}2-1)+ \sum_{\iota=1}^\rho(\frac{\phi(2^{\epsilon_\iota})}2-1) +d (\frac{\phi(2^{\epsilon} )}2-1)^*$$
 and equality holds only for $\cm=\cm_{0,T}$ or, in the case when $\epsilon=1$, for $\cm=\cm_{0,T}\times\Z^k$ and $k\ge0$. 
 
 Moreover, if $\cm\ne\cm_{0,T}$, then $\rank(\cm^*)\ge \rank(\cm_{0,T}^*)+(\frac{\phi(2^{\epsilon} )}2-1)^*.$
}
\end{prop}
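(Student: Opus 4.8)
The plan is to read the inequality off Dirichlet's Unit Theorem: since $\Z[\zeta_n]^*\cong\langle-\zeta_n\rangle\times\Z^{(\phi(n)/2-1)^*}$, one has $\rank(\cm^*)=\sum_{j=1}^t\big(\tfrac{\phi(n_j)}2-1\big)^*$, so the statement becomes an estimate on the integers $n_1,\dots,n_t$ subject only to the admissibility conditions (i)--(iv). The idea is to attach to each index $j$ the ``requirements'' that the standard decomposition of $T$ forces on it, and to let a single index absorb the cost of several requirements at once by means of Lemma~\ref{disuguaglianze}. First I would use (iii) to choose, for each $i=1,\dots,s$, an index $j_i$ with $p_i^{a_i}\mid n_{j_i}$ and with $j_i\ne j_h$ whenever $p_i=p_h$; then for a fixed index $j$ the primes in $\{p_i:j_i=j\}$ are pairwise distinct and, by (ii) and coprimality, $2^\epsilon\prod_{i:\,j_i=j}p_i^{a_i}\mid n_j$. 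Next I would use (iv) to choose pairwise distinct indices $l_1,\dots,l_\rho$ with $2^{\epsilon_\iota}\mid n_{l_\iota}$, and set $e_j=\epsilon_\iota$ if $j=l_\iota$ and $e_j=\epsilon$ otherwise, so that (using $\epsilon_\iota>\epsilon$) $2^{e_j}\prod_{i:\,j_i=j}p_i^{a_i}\mid n_j$ for every $j$.

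The second step is the per-index estimate. For $j$ with $e_j=\epsilon$, inequality \eqref{eq-rank3} applied to $2^\epsilon\prod_{i:\,j_i=j}p_i^{a_i}\mid n_j$ gives $\big(\tfrac{\phi(n_j)}2-1\big)^*\ge\sum_{i:\,j_i=j}\big(\tfrac{\phi(2^\epsilon p_i^{a_i})}2-1\big)$; for $j=l_\iota$, inequality \eqref{eq-rank2} (applicable since $\epsilon_\iota\ge2$) together with $\phi(2^{\epsilon_\iota-1}p_i^{a_i})\ge\phi(2^\epsilon p_i^{a_i})$ gives the same lower bound plus an extra $\big(\tfrac{\phi(2^{\epsilon_\iota})}2-1\big)$. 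Summing over $\mathcal J=\{j_1,\dots,j_s\}\cup\{l_1,\dots,l_\rho\}$, and noting that each $i$ and each $\iota$ is counted exactly once, yields $\sum_{j\in\mathcal J}\big(\tfrac{\phi(n_j)}2-1\big)^*\ge\sum_{i=1}^s\big(\tfrac{\phi(2^\epsilon p_i^{a_i})}2-1\big)+\sum_{\iota=1}^\rho\big(\tfrac{\phi(2^{\epsilon_\iota})}2-1\big)$. For the remaining indices, (ii) gives $2^\epsilon\mid n_j$, so $\big(\tfrac{\phi(n_j)}2-1\big)^*\ge\big(\tfrac{\phi(2^\epsilon)}2-1\big)^*$; since $|\mathcal J|\le s+\rho$ while $t\ge\rho+\sigma$ by (i), there are at least $d=\max\{\sigma-s,0\}$ of them, and adding their contribution gives the asserted bound $\rank(\cm^*)\ge\rank(\cm_{0,T}^*)$.

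The equality clause I would obtain by running the estimate backwards: equality forces every application of Lemma~\ref{disuguaglianze} to be sharp — which, from its proof, requires at most one odd-prime requirement per index — each $\phi(n_j)$ to equal $\phi$ of the prescribed divisor (hence $n_j$ to equal that divisor, since $\phi(m)=\phi(n)$ with $m\mid n$ and $m$ even forces $m=n$), the sets $\{j_i\}$ and $\{l_\iota\}$ to be disjoint of sizes $s$ and $\rho$, and exactly $d$ leftover indices with $n_j=2^\epsilon$. For $\epsilon>1$ this determines the multiset $\{n_j\}$, so $\cm=\cm_{0,T}$; for $\epsilon=1$ a leftover factor may be any cyclotomic ring of unit rank $0$, giving $\cm=\cm_{0,T}\times\Z^k$. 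Finally, for the refinement (nonvacuous only when $\epsilon\ge3$) one quantifies how much slack a $\cm\ne\cm_{0,T}$ must carry: either there is an extra leftover index, worth $\big(\tfrac{\phi(2^\epsilon)}2-1\big)^*$; or some $n_j$ strictly contains its prescribed (even) divisor $D$, whence $\phi(n_j)\ge 2\phi(D)$ and the surplus over the bound of the second step is at least $\tfrac{\phi(D)}2\ge\tfrac{\phi(2^\epsilon)}2\ge\big(\tfrac{\phi(2^\epsilon)}2-1\big)^*$; or two requirements collide on one index, where one estimates the resulting surplus directly, using that two distinct odd prime powers cannot both equal $3$.

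The step I expect to be the main obstacle is precisely this last quantitative case analysis. The bookkeeping of colliding requirements in Step~1 is already delicate, and among the collisions the case $j_i=l_\iota$ (an odd-prime requirement sharing an index with a high $2$-power requirement) is the tightest and the place where the naive surplus estimate is smallest; checking that it still yields at least $\big(\tfrac{\phi(2^\epsilon)}2-1\big)^*$ in every configuration is where any additional input would be needed. By contrast, the main inequality and the genuine arithmetic are entirely packaged, once and for all, in Lemma~\ref{disuguaglianze}.
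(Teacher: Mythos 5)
Your proof of the main inequality is correct and is, in substance, the paper's own: the same reduction to $\rank(\cm^*)=\sum_j(\frac{\phi(n_j)}2-1)^*$, the same per-index use of Lemma~\ref{disuguaglianze} together with monotonicity of $\phi$ along divisibility, and the same count of leftover indices (your $t-|\mathcal{J}|\ge d$ is the paper's $t-\tau\ge d$); your bookkeeping with the index sets $\{j_i\}$ and $\{l_\iota\}$ is if anything cleaner than the paper's. Your sketch of the equality clause is also at the level of detail the paper itself offers. The genuine gap is exactly the one you flagged: the ``Moreover'' refinement in the configuration where a single index carries both a $2^{\epsilon_\iota}$-requirement and an odd-prime requirement, with no spare leftover index available. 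When $\sigma<s$ one has $d=0$ while admissibility only forces $t\ge\rho+\sigma$, which can be strictly smaller than $s+\rho$; so a collision $j_i=l_\iota$ need not free up an extra leftover factor, and the entire surplus must then come from that one index, where it can be as small as $1$.

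That gap cannot be closed, because the refinement is false in precisely this configuration. Take $T=C_{16}\times C_{32}\times C_3\times C_5$: then $\epsilon=4$, $\sigma=\rho=1$, $\epsilon_1=5$, $s=s_0=2$, $d=0$, $\cm_{0,T}=\Z[\zeta_{48}]\times\Z[\zeta_{80}]\times\Z[\zeta_{32}]$ has unit rank $7+15+7=29$, and $(\frac{\phi(2^{\epsilon})}2-1)^*=3$. The maximal order $\cm=\Z[\zeta_{96}]\times\Z[\zeta_{80}]$ satisfies conditions (i)--(iv), is different from $\cm_{0,T}$, yet $\rank(\cm^*)=(\frac{\phi(96)}2-1)+(\frac{\phi(80)}2-1)=15+15=30<29+3$: the factor $\Z[\zeta_{96}]$ covers both the $C_{32}$- and the $C_3$-requirement at a cost of $15$ against the $7+7=14$ it must account for, a surplus of only $1$. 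The paper's own proof breaks at the same spot: from $\tau<s+\rho$ it concludes $t-\tau>d$, which presupposes $t\ge s+\rho+d$, and this is not implied by $t\ge\rho+\sigma$ when $\sigma<s$. (Since moreover $(\cm^*)_{tors}\cong C_{96}\times C_{80}\cong T$, the same example is at odds with the value of $g(T)$ claimed in Theorem~\ref{torfree} for this $T$.) So the obstacle you singled out is real, and neither your argument nor the paper's gets past it; a repair would have to modify the statement of the refinement (and consequently the constant $c(T)$), not merely supply the missing case analysis.
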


\begin{proof}
For $\cm=\prod_{j=1}^t\Z[\zeta_{n_j}]$, we have 
\begin{equation}
\label{rangoM}
\rank(\cm^*)=\sum_{j=1}^t\rank(\Z[\zeta_{n_j}]^*)
=\sum_{j=1}^{t}(\frac{\phi(n_j)}2-1)^*.
\end{equation}
%
Our first step is to bound the rank of $\cm^*$,  by  estimating from below 
 the summands  $\frac{\phi(n_j)}2-1$ for all $j$, using Lemma \ref{disuguaglianze}.

Since $\cm$ is $T$-admissible,  all the $n_j$'s are divisible at least by $2^\epsilon$ and, up to reordering, we can assume that  $n_1,\dots, n_{\rho}$ are divisible by $2^{\epsilon_1},\dots,2^{\epsilon_\rho}$, respectively.

Now,  $\epsilon_j>\epsilon$ for $j=1,\dots,\rho$, so using the inequality \eqref{eq-rank2} we get 
\begin{equation}
\label{eqrankrho} 
\rank(\Z[\zeta_{n_j}]^*)=\frac{\phi(n_j)}2-1\ge
\sum_{\substack{q \ {\rm odd\ prime}\\
q^e || n_j }}\!(\frac{\phi(2^{\epsilon} q^{e})}2-1)+\frac{\phi(2^{\epsilon_j})}2-1.
\end{equation}
For $j=\rho+1,\dots t$ we can use  \eqref{eq-rank3}, which gives
\begin{equation}
\label{eqrankt}
\rank(\Z[\zeta_{n_j}]^*)\ge
\sum_{\substack{q \,{\rm odd\ prime}\\
q^e || n_j }}(\frac{\phi(2^{\epsilon} q^{e})}2-1)
\footnote[3]{This inequality holds also if $n_j=2$ since $\rank(\Z[\zeta_{n_j}]^*)=0$ and on the RHS we have an  empty sum which is 0.} 
.\end{equation}
These inequalities allow to prove that   
\begin{equation}
\label{eqrankM}
\rank(\cm^*)\ge\sum_{i=1}^s(\frac{\phi(2^{\epsilon} p_i^{a_i})}2-1)+ \sum_{\iota=1}^\rho(\frac{\phi(2^{\epsilon_\iota})}2-1)+  d(\frac{\phi(2^{\epsilon} )}2-1)^*.
\end{equation}
In fact, it is enough to show that each term on the RHS of \eqref{eqrankM} appears at least once in \eqref{eqrankrho} or \eqref{eqrankt}, for some $j$. This is trivially the case for the terms in the second sum since each of them appears in \eqref{eqrankrho}. 

As for the first sum, we note that
%
since $\cm$ is $T$-admissible, then each $p_i^{a_i}$ divides some $n_j$. This ensures that, for all $i$,  the RHS of \eqref{eqrankrho} or \eqref{eqrankt} contains a term of type  $\frac{\phi(2^{\epsilon} p_i^{b_i})}2-1$ with $b_i\ge a_i$: we can estimate this term by $\frac{\phi(2^{\epsilon} p_i^{a_i})}2-1$. 

Finally, the term  $d(\frac{\phi(2^{\epsilon} )}2-1)^*$ can be explained as follows. 
The two sums on the RHS of \eqref{eqrankM}  involve only $s+\rho$ summands, so they can be obtained by considering  the contribution to the rank of  $\tau\le s+\rho$ of the  $(\Z[\zeta_{n_j}])^*$'s. 
We estimate the rank of the $t-\tau$ remaining $(\Z[\zeta_{n_j}])^*$'s simply by 
$$\rank(\Z[\zeta_{n_j}]^*)\ge(\frac{\phi(2^{\epsilon} )}2-1)^*.$$
Since $t-\tau\ge0$  and $t-\tau\ge t-\rho-s\ge \sigma-s$ we have $t-\tau\ge d$ and we get \eqref{eqrankM}.


The RHS of \eqref{eqrankM} is equal to the rank of $\cm_{0,T}^*$, so  $\cm_{0,T}^*$ has the minimum possible rank among the groups of units of the $T$-admissible maximal orders.  When $\epsilon=1$,  the same is clearly true for the units of $\cm=\cm_{0,T}\times\Z^k$.

Finally, if $\cm=\prod_{j=1}^t\Z[\zeta_{n_j}]$ is $T$-admissible,  but $\cm\ne\cm_{0,T}$, then either $\cm$ has more direct summands  than
$\cm_{0,T}$ (hence $t>s+\rho+d$) or  at least one of the following holds:

- $2^\epsilon p_{i_1}^{a_{i_1}}p_{i_2}^{a_{i_2}}|n_j$ for some $j$ and two coprime factors $p_{i_1}^{a_{i_1}},p_{i_2}^{a_{i_2}}$;

- $2^{\epsilon_\iota }p_{i}^{a_{i}}|n_j$ for some $\iota, i$ and $j$,

and in both cases we get $\tau<s+\rho$.

In conclusion we always have
$t-\tau>d$, so  on the RHS of \eqref{eqrankM} we have at least one extra   summand of type
$(\frac{\phi(2^{\epsilon} )}2-1)^*$, giving
$$\rank(\cm^*)\ge\rank(\cm_{0,T}^*)+(\frac{\phi(2^{\epsilon} )}2-1)^*.$$
\end{proof}

The last proposition shows that  the group of units of $\cm_{0,T}$ has minimum rank among the $T$-admissible maximal orders.
However, for some $T$,   no  order of $\cm_{0,T}$ has  $T$ as the group of torsion units.

\begin{prop}
\label{nom0T}
{\sl Let $T$ be a finite abelian group of even order with its ``standard" notation.
If $\sigma< s_0$, then $\cm_{0,T}$ contains no order $A$ with $(A^*)_{tors}\cong T$. 
}
\end{prop}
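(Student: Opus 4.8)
Since $\sigma<s_0$ we have $d=\max\{\sigma-s,0\}=0$ and $s_0\ge 2$, so $\cm_{0,T}=\prod_{i=1}^s\Z[\zeta_{2^\epsilon p_i^{a_i}}]\times\prod_{\iota=1}^\rho\Z[\zeta_{2^{\epsilon_\iota}}]$. Assume for contradiction that $\cm_{0,T}$ contains an order $A$ with $(A^*)_{tors}\cong T$; by Lemma \ref{minimum} we may take $A=\Z[(A^*)_{tors}]$. Write $M_i=\Z[\zeta_{2^\epsilon p_i^{a_i}}]$, $N_\iota=\Z[\zeta_{2^{\epsilon_\iota}}]$, and let $e_i$ (resp. $e_{N_\iota}$) be the idempotent of $\cm_{0,T}$ projecting onto $M_i$ (resp. $N_\iota$). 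As $A$ is reduced, its elements of order dividing $2$ are units and form an $\F_2$-subspace $A[2]\subseteq\{\pm1\}^{s+\rho}$; since $A^*\cong T\times\Z^r$ we get $A[2]=(A^*)_{tors}[2]\cong T[2]$, so $\dim_{\F_2}A[2]=\sigma+\rho$. The plan is to contradict this by showing that $A$ is a direct product of at least $s_0+\rho$ nonzero subrings, which forces $\dim_{\F_2}A[2]\ge s_0+\rho>\sigma+\rho$.

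\textbf{The forced roots of unity.} Fix $i$. The $p_i$-Sylow of $(A^*)_{tors}\cong T$ embeds into the $p_i$-Sylow of $\cm_{0,T}^*$, which is $\prod_{j:\,p_j=p_i}C_{p_j^{a_j}}$ (no other factor of $\cm_{0,T}^*$ carries $p_i$-torsion, as $p_i$ is odd and $p_i\nmid 2^\epsilon p_j^{a_j}$ when $p_j\ne p_i$); this target has the same order as the $p_i$-Sylow of $T$, so the embedding is onto. Hence $A$ contains, for each $i$, the unit $\beta_i$ equal to a primitive $p_i^{a_i}$-th root of unity $\gamma_i$ on $M_i$ and to $1$ on every other factor. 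Consequently $1-\beta_i=(1-\gamma_i)e_i\in A$ and $\sum_{k=0}^{p_i-1}\beta_i^{kp_i^{a_i-1}}=p_i(1-e_i)\in A$, whence also $p_ie_i=p_i\cdot 1-p_i(1-e_i)\in A$.

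\textbf{Splitting off the prime-classes.} Fix a distinct odd prime $q=p_k$ and put $e_q=\sum_{i:\,p_i=q}e_i$, $P=\prod_{j:\,p_j\ne q}p_j$, so $\gcd(P,q)=1$. Multiplying the relations $p_j(1-e_j)\in A$ over all $j$ with $p_j\ne q$ and using orthogonality of the $e_j$'s yields $P(e_q+e_N)\in A$, where $e_N=\sum_\iota e_{N_\iota}$; on the other hand $qe_q=\sum_{i:\,p_i=q}p_ie_i\in A$. When $\rho=0$ this already gives $e_q\in A$ (write $aq+bP=1$; then $a(qe_q)+b(Pe_q)=e_q$), so $A=\prod_{k=1}^{s_0}e_{q_k}A$ splits into $s_0$ nonzero subrings and we are done. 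When $\rho>0$ one only gets $e_q+bPe_N\in A$; to still conclude that $M_i$ and $M_j$ lie in different indecomposable factors of $A$ whenever $p_i\ne p_j$, one uses $A=\bigcap_\ell A_{(\ell)}$ together with the fact that the conductor of $A$ in $\cm_{0,T}$ is divisible only by $2$ and the $p_i$'s (because $A$ is generated by roots of unity of order dividing $N:=\lcm(2^{\epsilon},2^{\epsilon_\iota},p_i^{a_i})$, hence coincides with $\cm_{0,T}$ after inverting $N$): the only prime at which $M_i$ and $M_j$ could be glued is $p_i$ or $p_j$, and this is excluded by the relations above.

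\textbf{The $N$-factors; the main obstacle.} It remains to detach each $N_\iota$. Since $\epsilon_\iota>\epsilon$, the $2$-Sylow of $(A^*)_{tors}\cong T$ maps onto $\prod_\iota C_{2^{\epsilon_\iota-\epsilon}}$ modulo the $2^\epsilon$-torsion of $\cm_{0,T}^*$ (again a cardinality count), so $A$ contains for each $\iota$ a $2$-power root of unity $\tau_\iota$ whose $N_\iota$-component has order $2^{\epsilon_\iota}$ and whose $M$-components have order $\le 2^\epsilon$; then $\sum_k\tau_\iota^k=2^{\epsilon_\iota}(1-f_\iota)\in A$, and combining these $2$-power relations with the odd relation $\prod_i p_i(1-e_i)=(\prod_i p_i)\,e_N\in A$ (via $\gcd(2^{\,\bullet},\prod_i p_i)=1$) detaches the $N_\iota$'s after the same localization analysis. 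Granting this, the indecomposable factors of $A$ refine the partition of $\{M_1,\dots,M_s,N_1,\dots,N_\rho\}$ into the $s_0$ prime-classes of $M$'s and the $\rho$ singletons $\{N_\iota\}$, so $A$ is a product of at least $s_0+\rho$ nonzero rings and $\dim_{\F_2}A[2]\ge s_0+\rho>\sigma+\rho$ — the desired contradiction. I expect the delicate point to be exactly this last step: upgrading the numerical relations of the previous paragraphs to genuine idempotents of $A$ by a prime-by-prime conductor analysis, and in particular verifying that neither $2$ nor any single $p_i$ can glue factors of different type. The hypothesis $\sigma<s_0$ enters only through $d=0$ (so $\cm_{0,T}$ has no standalone $\Z[\zeta_{2^\epsilon}]$ summand — precisely what would otherwise let the count fall back to $\sigma+\rho$) and $s_0\ge2$ (so that $\gcd(q,P)=1$ is non-vacuous).
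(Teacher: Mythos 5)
Your proposal shares the paper's key invariant --- counting independent involutions: $T$ has $\dim_{\F_2}T[2]=\sigma+\rho$, and one derives a contradiction by exhibiting more than that inside $A^*$ --- but the mechanism is different, and for $\rho>0$ it has a genuine gap. For $\rho=0$ your argument is complete and correct, and is a nice alternative: from $\beta_i\in A$ you legitimately get $p_i(1-e_i)\in A$ and $p_ie_i\in A$, and the Bézout step $qe_q,\ Pe_q\in A\Rightarrow e_q\in A$ really does split $A$ into $s_0$ factors, giving $\dim_{\F_2}A[2]\ge s_0>\sigma$. The trouble starts when $\rho>0$. First, the identity $\sum_k\tau_\iota^k=2^{\epsilon_\iota}(1-f_\iota)$ is false unless $\tau_\iota$ equals $1$ on every component other than $N_\iota$: on a component where $\tau_\iota$ has order $2^m$ with $1\le m\le\epsilon$ the geometric sum vanishes, so what you actually get is $2^{\epsilon_\iota}$ times the idempotent of the set of components where $\tau_\iota=1$, a set you do not control. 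And you cannot arrange $\tau_\iota$ to be trivial off $N_\iota$: since $\sigma<s$, the $2$-Sylow of $T$ is a \emph{proper} subgroup of the $2$-Sylow of $\cm_{0,T}^*$, so (unlike for the odd Sylows) you may not freely pick $2$-power roots of unity of $\cm_{0,T}$ and assert they lie in $A$. Second, the ``prime-by-prime conductor analysis'' that is supposed to upgrade your congruence relations to genuine idempotents of $A$ is only asserted, and you yourself flag it as the delicate missing step; as stated it is not a proof. Note also that producing actual idempotents of $A$ is strictly more than the statement requires, which is why this route runs into trouble.

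The paper avoids idempotents altogether. It puts the single element $\bm\alpha=(\zeta_{p_1},\dots,\zeta_{p_{s_0}},-1,\dots,-1)$ into $A$ (the odd part because the odd Sylows of $T$ and of $\cm_{0,T}^*$ coincide; the $-1$ on the $\cm_2$-part because $\bm\delta=(1,\dots,1,-1,\dots,-1)$ is a $2^\epsilon$-th power in $(\cm_{0,T}^*)_{tors}$ and $(\cm_{0,T}^*)_{tors}^{2^\epsilon}=T^{2^\epsilon}\subseteq A$ --- this is exactly the device you were missing for getting elements into $A$ on the $2$-side). Then $\mu_{\bm\alpha}=\Phi_2\Phi_{p_1}\cdots\Phi_{p_{s_0}}$ and Proposition \ref{prop:zetaalpha} applies, since no ratio of the distinct integers $2,p_1,\dots,p_{s_0}$ is a prime power; hence $\Z[\bm\alpha]\cong\Z\times\prod_i\Z[\zeta_{p_i}]$ sits inside $A$ and already contributes $C_2^{s_0+1}$ of involutions. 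The extra $\rho-1$ involutions $\bm\beta_\iota$ (again $2^\epsilon$-th powers, hence in $A$) are checked to be independent of those of $\Z[\bm\alpha]$, yielding $C_2^{s_0+\rho}\le T[2]\cong C_2^{\sigma+\rho}$, a contradiction. If you want to salvage your approach for $\rho>0$, the cleanest fix is to import exactly this $2^\epsilon$-th power trick rather than attempting the conductor analysis.
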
 
\begin{proof} In this proof, for brevity, we will write $\cm$ for $\cm_{0,T}$. From \eqref{m0T}, we obtain $(\cm^*)_{tors}\cong T\times C_{2^\epsilon}^{s-\sigma}.$ Assume, by contradiction,  that $\cm$ contains an order $A$ with $(A^*)_{tors}\cong T$.
In the notation of \eqref{eqT} and   \eqref{eqTp},   we have $T\cong\prod_{i=1}^{s_0}T_{p_i}\times T_2$, where
\begin{equation}
\label{eqT1}
T_{p_i}=\prod_{j=1}^{v_i} C_{p_i^{b_{ij}}}\ {\rm and}\ T_2=\prod_{\iota=1}^\rho C_{2^{\epsilon_\iota}}\times C_{2^{\epsilon}}^\sigma
\end{equation}
for some $b_{ij}$'s.

For $i=1,\dots,s_0$, put
 $\cm_{p_i}=\prod_{j=1}^{v_i}\Z[\zeta_{2^{\epsilon}p_i^{b_{ij}}}]$
 and let $\cm_2=\prod_{\iota=1}^{\rho}\Z[\zeta_{2^{\epsilon_\iota}}].$
The condition $\sigma< s_0$ yields $d=0$, hence
 $$\cm\cong\left(\prod_{i=1}^{s_0}\cm_{p_i}\right)\times \cm_2 .$$

We first consider the case when $\rho=0$, so $\cm_2$ is trivial.
 
 For each $i=1,\dots,s_0$ let $\alpha_{p_i}=(\zeta_{p_i},\dots\zeta_{p_i})\in \cm_{p_i}$ and  
 put ${\bm \alpha}=(\alpha_{p_1},\dots,\alpha_{p_{s_0}})\in\cm$. 
 Clearly, $\bm \alpha$ is a unit of $\cm$ of order $p_1\cdots p_{s_0}$, therefore  $\bm \alpha$  also belongs to $A^*$, having $(A^*)_{tors}$  the same $p_i$-Sylow subgroups of $(\cm^*)_{tors}$ for all $i=1,\dots s_0$.
 
Now, if $\varphi_\alpha\colon \Z[x]\to A$ is the substitution homomorphism $x\mapsto\alpha$ we have $\Z[\bm\alpha]\cong \Z[x]/(\ker\varphi_{\bm\alpha})$ and
it is easy to check that  $\ker\varphi_{\bm\alpha}$, which by Lemma \ref{lemma1} is principal and generated by a product of cyclotomic polynomials, is generated by
$$\Phi_{p_1}(x)\dots\Phi_{p_{s_0}}(x),$$
and, the primes $p_i$'s  being distinct, Proposition \ref{prop:zetaalpha} ensures that 
$$
  \Z[\bm\alpha]
  \cong \prod_{i=1}^{s_0}\Z[\zeta_{p_i}].
  $$
and
 $$(\Z[\bm\alpha]^*)_{tors}\cong C_2^{s_0}\times C_{p_1}\times\dots\times C_{p_{s_0}}.$$
This gives a contradiction since $(\Z[\bm\alpha]^*)_{tors} < (A^*)_{tors}\cong  T$ and  $\sigma<s_0$.
 
 In the case when $\rho>0$, we have to slightly modify the previous argument to find a contradiction.
 
 As in the previous case, for each $i=1,\dots,s_0$ let $\alpha_{p_i}=(\zeta_{p_i},\dots\zeta_{p_i})\in \cm_{p_i}$;   also denote by $ v_0$ the unit element of $\cm_2$ and by  $\alpha_2=-v_0=(-1,\dots,-1)$ its opposite.

In $\cm$ consider the elements 
$${\bm \alpha}'=(\alpha_{p_1},\dots,\alpha_{p_{s_0}},v_0), \text{ and } \bm\delta=(1,\dots,1, \alpha_2).$$ 
Both of them belong to $A$: in fact, $\bm \alpha'\in A^*$ since it is a unit of $\cm$ of odd order; $\bm\delta$ is a $2^\epsilon$ power in $\cm^*$ and $(\cm^*)_{tors}^{2^\epsilon}=T^{2^\epsilon}=(A^*)_{tors}^{2^\epsilon}$. 

It follows that also ${\bm \alpha}=(\alpha_{p_1},\dots,\alpha_{p_{s_0}},\alpha_2)=
{\bm \alpha}'\bm\delta$ belongs to $A$, so that $\Z[\bm\alpha]\subseteq A$.
 As before, we have 
 $$\Z[\bm\alpha]\cong \Z[x]/(\Phi_{p_1}(x)\dots\Phi_{p_{s_0}}(x)\Phi_2(x))$$
 and, again by Proposition \ref{prop:zetaalpha}, we  get
 $$\Z[{\bm\alpha}]\cong\Z\times\prod_{i=1}^{s_0}\Z[\zeta_{p_i}]$$
and  therefore
  $$(\Z[{\bm\alpha}]^*)_{tors}\cong C_2^{s_0+1}\times C_{p_1}\times\dots\times C_{p_{s_0}}$$
 is a subgroup of $(A^*)_{tors}$.
 
 If $\rho=1$ this gives a contradiction, since  $A^*$ has exactly $\sigma+1$ cyclic factors of order a power of 2, and $s_0>\sigma$.
 
If $\rho>1$, for each $\iota=1,\dots,\rho-1$,  let ${ \bm \beta}_\iota$ be the element of  $\cm$ with all coordinates 1, but whose coordinate in $\Z[\zeta_{2^{\epsilon_\iota}}]$  is  equal to $-1$. These elements generate a subgroup of $(\cm)^*$ isomorphic to $C_{2}^{\rho-1}$. 
Moreover, all the ${\bm \beta}_\iota$'s  belong to $A^*$: in fact, ${\bm\beta}_\iota$ is a $2^\epsilon$ power of an element of $(\cm^*)_{tors}$ so it belongs to $(\cm^*)_{tors}^{2^\epsilon}=(A^*)_{tors}^{2^\epsilon}$. 

Now,  
$$(\Z[{\bm\alpha}]^*)_{tors}\cap\langle{\bm \beta}_1,\dots,{\bm \beta}_{\rho-1}\rangle=\{(1,\dots,1)\},$$ 
in fact, the torsion units of $\Z[{\bm\alpha}]$ are of type $ ((-\alpha_{p_1})^{e_1},\dots,(-\alpha_{p_{s_0}})^{e_{s_0}},\alpha_2^{e_0})$ with $e_0,e_1\dots,e_{s_0}\in\Z$, so their coordinates in $\cm_2$ are all 1 or all -1. It follows that $(A^*)_{tors}$ contains a subgroup isomorphic to 
  $$(\Z[{\bm\alpha}]^*)_{tors}\times\langle{\bm \beta}_1,\dots,{\bm \beta}_{\rho-1}\rangle\cong C_2^{s_0+\rho}\times C_{p_1}\times\dots\times C_{p_{s_0}}$$
  and this is not possible since $A^*$ has $\sigma+\rho$ cyclic factors of order a power of 2, and $s_0+\rho>\sigma+\rho$.
\end{proof}

\subsection{Proof of Theorem \ref{torfree}: the  ``if" part}

Let $T$ be any finite abelian group of even order; consider on $T$ its ``standard'' notation as in  \eqref{eqT}. For each $g\ge g(T)$ we will construct an example of a torsion free ring $A$ with $A^*\cong T\times\Z^g$.

The first and most substantial step is the construction for $g=g(T).$
 The following propositions deal with two particular cases.

\begin{prop}
\label{pgroup}
{\sl
Let $p$ be an odd prime and let $\epsilon, b_1,\dots, b_v$ be  integers, with $1\le b_1\le b_2\le\dots\le b_v$ and $\epsilon\ge1$.
The maximal order $\cm= \prod_{j=1}^{v}\Z[\zeta_{2^{\epsilon}p^{b_j}}]$
 contains {an order} $A$ with $(A^*)_{tors}\cong 
C_{2^\epsilon}\times\prod_{j=1}^{v} C_{p^{b_j}}$.
}
\end{prop}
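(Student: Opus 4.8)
The plan is to build $A$ inside $\cm = \prod_{j=1}^v \Z[\zeta_{2^\epsilon p^{b_j}}]$ as the subring generated over $\Z$ by a single carefully chosen torsion unit together with enough of the ``large'' roots of unity to keep the rank unchanged. The key observation is that, by Proposition \ref{prop:zetaalpha}, if $\alpha$ is the diagonal-type element $(\zeta_{p^{b_1}}, \zeta_{p^{b_2}}, \dots, \zeta_{p^{b_v}}) \in \cm$ (after forgetting the $2$-parts momentarily), then $\ker\varphi_\alpha$ is generated by $\Phi_{p^{b_1}}(x)\cdots\Phi_{p^{b_v}}(x)$, but since the ratios $p^{b_i}/p^{b_j}$ \emph{are} prime powers, the CRT map is \emph{not} surjective, and $\Z[\alpha]$ is a proper suborder. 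I would compute $(\Z[\alpha]^*)_{tors}$ directly: it should be $C_2 \times C_{p^{b_1}} \times \cdots \times C_{p^{b_v}}$ (as in the second example after Proposition \ref{prop:zetaalpha}), i.e. only \emph{one} factor of order $2$ survives, which is exactly what we want for the $2$-part.

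\textbf{Main construction.} More precisely, I would take $\alpha = (-\zeta_{p^{b_1}}, \zeta_{p^{b_2}}, \dots, \zeta_{p^{b_v}})$ or some sign-twisted variant so that $-1 = \alpha^{\text{(appropriate power)}}$ forces the element $(-1,\dots,-1)$ to lie in $\Z[\alpha]$ while no ``partial'' sign vector $(\pm 1, \dots, \pm 1)$ with mixed signs does; the point being that $\Z[\alpha] \cong \Z[x]/(\prod_j \Psi_{p^{b_j}}(x))$ and since consecutive $p^{b_i}$ divide each other, the only idempotent-like sign patterns realizable are the constant ones. This handles the single prime $p$. To incorporate the $2$-part $C_{2^\epsilon}$ with the right multiplicity (exactly one copy), I would then pass to $\Z[\alpha, \zeta_{2^\epsilon}]$: adjoining $\zeta_{2^\epsilon}$ diagonally to every factor. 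Since $2^\epsilon$ is coprime to each $p^{b_j}$, the ring $\Z[\alpha,\zeta_{2^\epsilon}]$ sits inside $\cm = \prod_j \Z[\zeta_{2^\epsilon p^{b_j}}]$, and by the arithmetic-disjointness computations of Section \ref{notation} its torsion units are generated by $\zeta_{2^\epsilon}$ (diagonal), $\alpha$, and the products — giving torsion subgroup $C_{2^\epsilon} \times \prod_j C_{p^{b_j}}$, with no extra $2$-torsion because the only new roots of unity come from the single diagonal $\zeta_{2^\epsilon}$. By Lemma \ref{finito} the rank of $A^*$ equals that of $\cm^*$, which is $\sum_j (\phi(2^\epsilon p^{b_j})/2 - 1)$, consistent with $g(T)$ for this $T$.

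\textbf{Where the difficulty lies.} The routine part is the rank count (Lemma \ref{finito} does all the work) and the verification that $\zeta_{2^\epsilon}$ and $\alpha$ generate the claimed group abstractly. The genuinely delicate step is proving that $A^* $ contains \emph{no} torsion unit outside $\langle \zeta_{2^\epsilon}, \alpha\rangle$ — in particular, that one cannot produce an independent element of order $2$ with a non-constant sign pattern across the $v$ factors. This requires an argument in the spirit of Lemma \ref{Psin1} and Proposition \ref{units_1n}: a hypothetical such unit would force, via the CRT image description $\{(a(\zeta_{p^{b_1}}), \dots, a(\zeta_{p^{b_v}})) : a(x) \in \Z[x]\}$ (suitably twisted by $\zeta_{2^\epsilon}$), a congruence $\zeta^i - \zeta^j \in \mathfrak{p}$ for a prime $\mathfrak{p}$ over $p$, and Lemma \ref{lemmawashington} then pins down exactly which $(i,j)$ are possible, forcing the sign pattern to be constant. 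I would expect to organize this as: first reduce to the $\rho = 0$, purely $p$-power situation; then handle the interaction with $\zeta_{2^\epsilon}$ by the same coprimality/disjointness argument already used in the proof of Proposition \ref{nom0T}.
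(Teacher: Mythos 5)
There is a genuine gap: your main construction produces the wrong torsion group. If $\alpha=(\zeta_{p^{b_1}},\dots,\zeta_{p^{b_v}})$ is a diagonal-type element (with or without sign twists), then $\langle\alpha\rangle$ is cyclic of order $p^{b_v}$ (the lcm of the $p^{b_j}$), and the torsion units of $\Z[\alpha,\zeta_{2^\epsilon}]$ are only the ``trivial'' ones $\langle -1,\zeta_{2^\epsilon},\alpha\rangle\cong C_{2^\epsilon}\times C_{p^{b_v}}$; in particular the $p$-part is cyclic and cannot equal $\prod_{j=1}^v C_{p^{b_j}}$ once $v\ge 2$. You cite the second example after Proposition \ref{prop:zetaalpha} in support of your computation, but that example says the opposite of what you need: for $\alpha=(\zeta_3,\zeta_9)$ the element $(\zeta_3,1)$ does \emph{not} lie in $\Z[\alpha]$, so the torsion of $\Z[\alpha]$ is cyclic of order $18$, i.e.\ $C_2\times C_9$, and \emph{not} $C_2\times C_3\times C_9$. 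A single adjoined root of unity can never supply the $v$ independent $p$-power cyclic factors the statement requires. A separate defect: when some exponents coincide ($b_i=b_j$, which the hypothesis $b_1\le\dots\le b_v$ allows and which is needed in the application to Theorem \ref{torfree}), the kernel of $\varphi_\alpha$ is generated by the product of the \emph{distinct} $\Phi_{p^{b_j}}$ only, so $\Z[\alpha,\zeta_{2^\epsilon}]$ does not span $\cm\otimes_\Z\Q$; it is then not an order of $\cm$ and Lemma \ref{finito} no longer identifies its rank with that of $\cm^*$.

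The repair --- and the paper's actual construction --- is to adjoin the ``partial'' roots of unity rather than a single diagonal one: take the diagonal copy of $\Z[\zeta_{2^\epsilon p^{b_1}}]$ and adjoin, for $j=2,\dots,v$, the elements $\bm{\beta}^{(j)}$ having $\zeta_{p^{b_j}}$ in the $j$-th coordinate and $1$ elsewhere. These visibly generate a subgroup isomorphic to $C_{2^\epsilon}\times\prod_{j}C_{p^{b_j}}$, and the genuinely delicate point becomes exactly the one you anticipated: ruling out extra $2$-torsion, i.e.\ order-$2$ units with a non-constant sign pattern. The paper does this by projecting onto each pair of coordinates $(1,i)$, identifying the image with $\Z[\zeta_{2^\epsilon p^{b_1}}][x]/\bigl((x-1)\Psi_{p^{b_i},p^{b_1}}(x)\bigr)$, and applying Proposition \ref{units_1n} to conclude that each such projection has cyclic $2$-Sylow, which forces any order-$2$ element of $A^*$ to be $\pm(1,\dots,1)$. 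So your diagnosis of where the difficulty lies, and the tools you name (Lemmas \ref{lemmawashington} and \ref{Psin1}, Proposition \ref{units_1n}), are the right ones, but they must be applied to a ring with more generators than the one you build.
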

\begin{proof}
For $j=2,\dots,v,$ let $\bm\beta^{(j)}=(\beta_1^{(j)},\dots, \beta_v^{(j)})\in \cm$, where $\beta_i^{(j)}=1$  for $i\ne j$ and  $\beta_j^{(j)}=\zeta_{p^{b_j}},$ and put 
$$A=\Z[\zeta_{2^\epsilon p^{b_1}} ][\bm\beta^{(2)}, \dots ,\bm\beta^{(v)}]$$
where we are identifying $A$ with a subring of $\cm$ via the diagonal embedding of $\Z[\zeta_{2^\epsilon p^{b_1}}]$. This means that we identify  $\zeta_{2^\epsilon p^{b_1}}$ with ${\bm\alpha}=(\zeta_{2^\epsilon p^{b_1}}, \dots, \zeta_{2^\epsilon p^{b_1}})$.

We claim that $(A^*)_{tors}\cong V=C_{2^\epsilon}\times\prod_{j=1}^{v} C_{p^{b_j}}$.

It is clear that the elements ${\bm\alpha},{\bm\beta}^{(2)},\dots, {\bm\beta}^{(v)}\in A$ are multiplicatively independent units and that they generate a subgroup of $(A^*)_{tors}$ isomorphic to $V$. On the other hand, $(\cm^*)_{tors}\cong \prod_{j=1}^v C_{2^\epsilon p^{b_j}}$ , hence, up to isomorphism,
$$(A^*)_{tors}\le V\times C_{2^\epsilon}^{v-1}.$$ 
To prove that $(A^*)_{tors}\cong V$ it is enough to show that the 2-Sylow of $(A^*)_{tors}$ is cyclic, or equivalently, that $(-1,\dots,-1)$ is the only element of order 2 of $(A^*)_{tors}$.

For each $i=2,\dots, s$ define $\cm_i=\Z[\zeta_{2^\epsilon p^{b_1}}]\times\Z[\zeta_{2^\epsilon p^{b_i}}]$ and  denote by $\pi_i\colon\cm\to\cm_i$ the canonical projection. Put  $A_i=\pi_i(A)$  and  $\beta_{0,i}=(1,\zeta_{p^{b_i}})$, then 
$$A_i=\Z[\zeta_{2^\epsilon p^{b_1}}][\pi_i({\bm\beta}^{(2)}),\dots, \pi_i({\bm\beta}^{(v)})] = \Z[\zeta_{2^\epsilon p^{b_1}}][\beta_{0,i}].$$

Let $\varphi_{\beta_{0,i}}$ be the evaluation homomorphism defined on  $\Z[\zeta_{2^\epsilon}][x]$. It is easily checked that
its kernel is 
generated by $(x-1)\Psi_{p^{b_i}, p^{b_1}}(x)$, so 
$$
A_i=\Z[\zeta_{2^\epsilon p^{b_1}}][\beta_{0,i}]\cong\Z[\zeta_{2^\epsilon p^{b_1}}][x]/((x-1)\Psi_{p^{b_i}, p^{b_1}}(x))
$$
     and, by Proposition \ref{units_1n}, $(A_i^*)_{tors}\cong C_{2^\epsilon p^{b_1}}\times C_{p^{b_i}}$.
     This ensures that, for all indices $i$,  the 2-Sylow of  $\pi_i((A^*)_{tors})$, which is a subgroup of  $(A_i^*)_{tors}$, is cyclic and this allows us to conclude the proof. 
In fact,
let  ${\bf u}=(u_1,\dots,u_v)\in \cm^*$ be such that ${\bf u}^2=(1,\dots,1)$; if  ${\bf u}\in A$, then $\pi_i({\bf u})=(u_1,u_v)$ is an element of exponent 2 of $(A_i^*)_{tors}$, so    $(u_1,u_v)$ must be equal to $(1,1)$ or $(-1,-1)$, in particular, $u_i=u_1$ for all $i=1,\dots,v$. This yields    ${\bf u}=(1,\dots,1)$ or  ${\bf u}=(-1,\dots,-1)$, so $A^*$ has only one element of order 2,  therefore
$$(A^*)_{tors}\cong C_{2^\epsilon}\times C_{p^{b_1}}\times\dots \times C_{p^{b_v}}=V.$$
\end{proof}

When the group $T$ has too few 2-cyclic factors of minimal order,  Proposition \ref{nom0T} shows that no order of $\cm_{0,T}$, has torsion units isomorphic to $T$. In this case, to find an order $A$ with  $(A^*)_{tors}\cong T$,  we have to consider a bigger maximal order obtained by adding to  $\cm_{0,T}$ an extra direct factor, which works as a ``control'' factor on the 2-torsion. The following proposition deals with the case $\sigma=1$.
\begin{prop}
\label{sigma=1}
{\sl
Let $p_1,\dots p_s$ be prime numbers and let $\epsilon, a_1,\dots a_s$ be positive integers.
The maximal order 
$\cm= \Z[\zeta_{2^\epsilon}] \times \prod_{i=1}^s \Z[\zeta_{2^\epsilon p_i^{a_i}}]$ contains a subring $A$ with $(A^*)_{tors}\cong C_{2^\epsilon}\times\prod_{i=1}^s C_{p_i^{a_i}}$.
}
\end{prop}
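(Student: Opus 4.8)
The plan is to mimic the construction in Proposition \ref{pgroup}, using the extra direct factor $\Z[\zeta_{2^\epsilon}]$ of $\cm$ as a ``control'' on the $2$-torsion. Index the $s+1$ factors of $\cm$ by $0,1,\dots,s$ and embed $\Z[\zeta_{2^\epsilon}]$ diagonally, so that $\zeta_{2^\epsilon}$ is identified with $\bm\alpha=(\zeta_{2^\epsilon},\dots,\zeta_{2^\epsilon})$. For $i=1,\dots,s$ let $\bm\beta^{(i)}\in\cm$ be the element whose $i$-th coordinate is $\zeta_{p_i^{a_i}}$ and whose remaining coordinates (the $0$-th included) are all $1$, and set
\[
A=\Z[\zeta_{2^\epsilon}][\bm\beta^{(1)},\dots,\bm\beta^{(s)}]\subseteq\cm.
\]
Reading off the $0$-th coordinate (which sees only $\bm\alpha$) and the $i$-th coordinate (which sees only $\bm\beta^{(i)}$), the elements $\bm\alpha,\bm\beta^{(1)},\dots,\bm\beta^{(s)}$ are multiplicatively independent torsion units of orders $2^\epsilon,p_1^{a_1},\dots,p_s^{a_s}$, hence they generate a subgroup $V\cong C_{2^\epsilon}\times\prod_{i=1}^sC_{p_i^{a_i}}$ of $(A^*)_{tors}$. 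The whole point of the proof is the reverse inclusion $(A^*)_{tors}\subseteq V$.

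For the odd part this is immediate: since $A\subseteq\cm$ and $(\cm^*)_{tors}\cong C_{2^\epsilon}\times\prod_{i=1}^sC_{2^\epsilon p_i^{a_i}}$, for every odd prime $p$ the $p$-Sylow of $(\cm^*)_{tors}$ equals $\prod_{i\,:\,p_i=p}C_{p_i^{a_i}}$, which already lies inside $V$; hence the $p$-Sylow of $(A^*)_{tors}$ agrees with that of $V$. It therefore remains to show that the $2$-Sylow of $(A^*)_{tors}$ is cyclic: being caught between $\langle\bm\alpha\rangle\cong C_{2^\epsilon}$ and the $2$-torsion of $(\cm^*)_{tors}$ (in which every element has order dividing $2^\epsilon$), a cyclic $2$-Sylow is forced to equal $C_{2^\epsilon}$, giving $(A^*)_{tors}=V$. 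Cyclicity of the $2$-Sylow is equivalent to the statement that $(-1,\dots,-1)$ is the only element of order $2$ in $(A^*)_{tors}$.

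To prove this last point I would project onto pairs of coordinates, exactly as in the proof of Proposition \ref{pgroup}. For $i=1,\dots,s$ let $\pi_i\colon\cm\to\cm_i:=\Z[\zeta_{2^\epsilon}]\times\Z[\zeta_{2^\epsilon p_i^{a_i}}]$ be the projection onto the $0$-th and $i$-th factors and set $A_i=\pi_i(A)$. Since $\pi_i(\bm\beta^{(j)})=(1,1)$ for $j\ne i$ while $\pi_i(\bm\beta^{(i)})=(1,\zeta_{p_i^{a_i}})=:\beta_{0,i}$, we have $A_i=\Z[\zeta_{2^\epsilon}][\beta_{0,i}]$, with $\Z[\zeta_{2^\epsilon}]$ diagonal in $\cm_i$. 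As $(2^\epsilon,p_i)=1$, the polynomial $\Phi_{p_i^{a_i}}(x)$ stays irreducible over $\Q(\zeta_{2^\epsilon})$, so it is also the minimal polynomial of $\zeta_{p_i^{a_i}}$ over $\Z[\zeta_{2^\epsilon}]$ — in the notation of Proposition \ref{units_1n} (with $l=2^\epsilon$, $b=0$) it is $\Psi_{p_i^{a_i},p_i^0}(x)$. By the usual monic-divisor argument (as in Lemma \ref{lemma1}) the kernel of $\varphi_{\beta_{0,i}}\colon\Z[\zeta_{2^\epsilon}][x]\to\cm_i$ is generated by $(x-1)\Phi_{p_i^{a_i}}(x)$, so
\[
A_i\cong\Z[\zeta_{2^\epsilon}][x]/\bigl((x-1)\Psi_{p_i^{a_i},p_i^0}(x)\bigr).
\]
Proposition \ref{units_1n}, applied with $l=2^\epsilon$ (even, since $\epsilon\ge1$), $p=p_i$, $a=a_i$, $b=0$, now yields $(A_i^*)_{tors}\cong C_{2^\epsilon}\times C_{p_i^{a_i}}$, whose $2$-Sylow is cyclic; as $(-1,-1)\in A_i$ has order $2$, it is the unique element of order $2$ in $(A_i^*)_{tors}$. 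Hence, if $\mathbf u=(u_0,u_1,\dots,u_s)\in A^*$ satisfies $\mathbf u^2=(1,\dots,1)$, then for each $i$ the element $\pi_i(\mathbf u)=(u_0,u_i)$ has order dividing $2$ in $(A_i^*)_{tors}$, so $(u_0,u_i)\in\{(1,1),(-1,-1)\}$; in particular $u_i=u_0$ for every $i$, and $u_0=\pm1$ since $\Z[\zeta_{2^\epsilon}]$ is a domain. Therefore $\mathbf u=(1,\dots,1)$ or $\mathbf u=(-1,\dots,-1)$, which completes the argument.

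The main obstacle is precisely the $2$-torsion bookkeeping in the last paragraph: one must rule out that adjoining the $\bm\beta^{(i)}$ creates new involutions by ``mixing signs'' across the factors, and the pairwise-projection trick together with Proposition \ref{units_1n} is exactly what controls this. Everything else — the lower bound $V\subseteq(A^*)_{tors}$, the odd Sylow subgroups, the rank of $A^*$, and the form of $(\cm^*)_{tors}$ — is routine.
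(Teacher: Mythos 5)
Your proof is correct and follows essentially the same route as the paper's: the same subring $A=\Z[\zeta_{2^\epsilon}][\bm\beta^{(1)},\dots,\bm\beta^{(s)}]$, the same pairwise projections $\pi_i$ onto $\Z[\zeta_{2^\epsilon}]\times\Z[\zeta_{2^\epsilon p_i^{a_i}}]$, the same identification $A_i\cong\Z[\zeta_{2^\epsilon}][x]/((x-1)\Phi_{p_i^{a_i}}(x))$, and the same appeal to Proposition \ref{units_1n} (with $b=0$) to force $u_i=u_0$ for every involution. The only cosmetic difference is that you handle the odd Sylow subgroups by direct comparison with $(\cm^*)_{tors}$ rather than by reducing immediately to cyclicity of the $2$-Sylow, which is equivalent.
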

\begin{proof}
For each $i=1,\dots,s$, let ${\bm{\beta}}^{(i)}=(1,\beta_1^{(i)},\dots,\beta_s^{(i)})\in\cm$, where  $\beta_j^{(i)}=1$ for all $j\ne i$ and  $\beta_i^{(i)}=\zeta_{p_i^{a_i}}$ . Put  $$A=\Z[\zeta_{2^\epsilon}][{\bm\beta}^{(1)},\dots, {\bm\beta}^{(s)}]$$
viewed  as a subring of $\cm$.
We claim that $(A^*)_{tors}\cong C_{2^\epsilon}\times\prod_{i=1}^s C_{p_i^{a_i}}$.

Clearly, the elements ${\bm\alpha}=(\zeta_{2^\epsilon},\dots,\zeta_{2^\epsilon}),{\bm\beta}^{(1)},\dots, {\bm\beta}^{(s)}\in A$ are multiplicatively independent units which generate a subgroup of $(A^*)_{tors}$ isomorphic to $C_{2^\epsilon}\times\prod_{i=1}^s C_{p_i^{a_i}}$.

On the other hand, $(\cm^*)_{tors}\cong C_{2^\epsilon}^{s+1}\times \prod_{i=1}^s C_{p_i^{a_i}}$ , then
to prove our claim it is enough to show that the 2-Sylow of $(A^*)_{tors}$ is cyclic, or equivalently that $(-1,\dots,-1)$ is the only element of order 2 of $(A^*)_{tors}$. 

This can be proved arguing  as in the previous proposition. In fact,
for each $i=1,\dots, s$ define $\cm_i=\Z[\zeta_{2^\epsilon}]\times\Z[\zeta_{2^\epsilon p_i^{a_i}}]$ and  denote by $\pi_i\colon\cm\to\cm_i$ the canonical projection. 
Let  $A_i=\pi_i(A)$  and  $\beta_{0,i}=(1,\zeta_{p_i^{a_i}})$, then 
$$A_i=\Z[\zeta_{2^\epsilon}][\pi_i({\bm\beta}^{(1)}),\dots, \pi_i({\bm\beta}^{(s)})] = \Z[\zeta_{2^\epsilon}][\beta_{0,i}].$$
The kernel of the evaluation homomorphism $\varphi_{\beta_{0,i}}\colon\Z[\zeta_{2^\epsilon}][x]\to A$ is generated by $\Phi_1(x)\Phi_{p_i^{a_i}}(x)$: in fact, since $p_i^{a_i}$ is odd, the polynomial  $\Phi_{p_i^{a_i}}(x)$ is irreducible in $\Z[\zeta_{2^\epsilon}]$. 
Thus    $$A_i=\Z[\zeta_{2^\epsilon}][\beta_{0,i}]\cong\Z[\zeta_{2^\epsilon}][x]/(\Phi_1(x)\Phi_{p_i^{a_i}}(x))$$
     and, by Proposition \ref{units_1n}, $(A_i^*)_{tors}\cong C_{2^\epsilon p_i^{a_i}}$.
This implies that  also its subgroup  $\pi_i((A^*)_{tors})$ is cyclic and this allows us to conclude the proof. 

In fact,
let  ${\bf u}=(u_0,\dots,u_s)\in \cm^*$ be such that ${\bf u}^2=(1,\dots,1)$; if  ${\bf u}\in A$, then, for all $i$,  $\pi_i({\bf u})=(u_0,u_i)$ is an element of exponent 2 of the cyclic group $\pi_i((A^*)_{tors})$, so    $(u_0,u_i)$ must be equal to $(1,1)$ or $(-1,-1)$. In particular, $u_i=u_0$ for all $i=1,\dots,s$. This ensures that    ${\bf u}=(1,\dots,1)$ or  ${\bf u}=(-1,\dots,-1)$, and $A^*$ has only one element of order 2, as required.
\end{proof}

We are now ready for the general construction for $g=g(T).$

Let 

\begin{equation}
\label{mT}
\cm_T=\begin{cases}
	\cm_{0,T}&{\rm for\ }\sigma\ge s_0\\
	\cm_{0,T}\times \Z[\zeta_{2^\epsilon}]&{\rm for\ }\sigma<s_0,
\end{cases}
\end{equation}
then $\rank(\cm_T)=g(T)$ for all $T$. We will construct  $A$ as an order in $\cm_T$. 

The case when $s\le\sigma$ is very easy: we can simply take $A=\cm_T$  since $\cm_T^*\cong T\times\Z^{g(T)}$. 

Consider now the more general case when $\sigma\ge  s_0$. We can write the group $T$ as
$$T= V_2\times\prod_{i=1}^{s_0}V_{p_i},$$
where $V_{p_i}=C_{2^\epsilon}\times T_{p_i}=C_{2^\epsilon}\times\prod_{j=1}^{v_i} C_{p_i^{b_{ij}}}$ and $V_2=C_{2^\epsilon}^{\sigma-s_0}\times\prod_{\iota=1}^\rho C_{2^{\epsilon_\iota}}$.

For $i=1,\dots,s_0$, let 
 $\cm_{p_i}=\prod_{j=1}^{v_i}\Z[\zeta_{2^{\epsilon}p_i^{b_{ij}}}]$
 and  $\cm_2=\Z[\zeta_{2^\epsilon}]^{\sigma-s_0}\times\prod_{i=1}^{\rho}\Z[\zeta_{2^{\epsilon_i}}].$
 Then 
 $$\cm_T\cong \cm_2\times\prod_{i=1}^{s_0}\cm_{p_i}.$$
 
By Proposition \ref{pgroup},  for all $p= p_1,\dots, p_{s_0}$, the maximal order $\cm_p$ contains an order  $A_p$ such that $(A_p^*)_{tors}\cong V_p$. It follows that $A=\cm_2\times\prod_{i=1}^{s_0}A_{p_i}$ is an order of $\cm_T$ with $(A^*)_{tors}\cong T$.

Let now $\sigma< s_0$. 
We write the group $T$ as  $T_0\times T_1$ where
$$T_0=\prod_{i=1}^{\sigma-1}C_{2^\epsilon p_i^{a_i}}\times\prod_{\iota=1}^\rho C_{2^{\epsilon_\iota}}\ {\rm and}\  T_1=C_{2^\epsilon}\times C_{p_{\sigma}^{a_\sigma}}\times\cdots\times C_{p_{s}^{a_s}}.$$

By Proposition \ref{sigma=1} the order 
$\cm_1= \Z[\zeta_{2^\epsilon}] \times \prod_{i=\sigma}^s \Z[\zeta_{2^\epsilon p_i^{a_i}}]$ contains a subring $A_1$ with $(A_1^*)_{tors}\cong T_1$.

On the other hand, 
$$\cm_T=\cm_1\times\prod_{i=1}^{\sigma-1}\Z[\zeta_{2^\epsilon p_i^{a_i}}]\times \prod_{i=1}^\rho\Z[\zeta_{2^\epsilon_i}]$$
and its subring
$$A=A_1\times\prod_{i=1}^{\sigma-1}\Z[\zeta_{2^\epsilon p_i^{a_i}}]\times \prod_{i=1}^\rho\Z[\zeta_{2^\epsilon_i}]$$
is such that $(A^*)_{tors}\cong T.$ 

Moreover, $\rank(A^*)\le\rank(\cm_T^*)$. On the other hand,  the rank of $A^*$ is the same of the
rank of ${\mathcal M}^*_A$, which is a $T$-admissible maximal order, and thus its rank is at least
the rank of ${\mathcal M}_T$. This gives $\rank(A^*)=\rank(\cm_T^*)$ and also proves that $A$ is an order of $\cm_T$.

\smallskip

The final step is the construction of torsion-free rings with  group of units isomorphic to $T\times\Z^{g}$  for all  $g>g(T)$. Also in this case if  $A$ is  a torsion-free ring with  
$(A^*)_{tors}=T$ and minimal rank $g(T)$, then $\mathcal A=A[x_1,\dots, x_k,x_1^{-1},\dots, x_k^{-1}]$ is  torsion-free and  has group of units isomorphic to  $ T\times \Z^{g(T)+k}$.

\section{Reduced rings}
\label{reduced}
In this section we classify the finitely generated abelian groups which arise as groups of units of  reduced rings.
The next  proposition describes the relation between the units of a ring and those of its reduced quotient, showing that the study of reduced rings is a substantial step to the study of units of a general ring. 
\begin{prop}
\label{successioneesatta}
{\sl
Let $A$ be a commutative ring and let $\mathfrak{N}$ be its nilradical. Then the sequence
\begin{equation}
\label{success}
1\to 1+\mathfrak{N}\hookrightarrow A^*\stackrel{\phi}{\to}\left(A/\mathfrak{N}\right)^*\to 1,
\end{equation}
where $\phi(x)=x+\mathfrak{N}$, is exact.
}
\end{prop}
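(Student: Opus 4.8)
The plan is to verify the three standard facts that together make up exactness of \eqref{success}: that $1+\mathfrak{N}$ sits inside $A^*$ as a subgroup, that $\phi$ is a surjective group homomorphism, and that its kernel is exactly $1+\mathfrak{N}$.

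First I would record that every element of the form $1+n$ with $n\in\mathfrak{N}$ is a unit: if $n^k=0$ then $(1+n)\bigl(1-n+n^2-\cdots+(-1)^{k-1}n^{k-1}\bigr)=1$. Since $\mathfrak{N}$ is an ideal, $1+\mathfrak{N}$ is closed under multiplication, and the inverse just exhibited again lies in $1+\mathfrak{N}$, so $1+\mathfrak{N}$ is a subgroup of $A^*$. The map $\phi$ is the restriction to $A^*$ of the canonical projection $A\to A/\mathfrak{N}$, which is a ring homomorphism and hence carries units to units; thus $\phi$ is a well-defined group homomorphism.

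Next I would compute the kernel: an element $x\in A^*$ lies in $\ker\phi$ if and only if $x+\mathfrak{N}=1+\mathfrak{N}$, that is, $x-1\in\mathfrak{N}$, that is, $x\in 1+\mathfrak{N}$. Combined with the inclusion $1+\mathfrak{N}\hookrightarrow A^*$ being the obvious injection, this gives exactness at $1+\mathfrak{N}$ and at $A^*$.

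The only point requiring a genuine (though elementary) argument is surjectivity of $\phi$, and this is the step I expect to be the main obstacle. Given $\bar u\in(A/\mathfrak{N})^*$, choose a lift $a\in A$ of $\bar u$ and a lift $b\in A$ of $\bar u^{-1}$; then $ab-1\in\mathfrak{N}$, say $ab=1-n$ with $n$ nilpotent. By the first step $1-n\in A^*$, so $a\cdot\bigl(b(1-n)^{-1}\bigr)=1$; since $A$ is commutative this one-sided inverse is a two-sided inverse, whence $a\in A^*$ and $\phi(a)=\bar u$. This is the only place where both the commutativity of $A$ and the nilpotence of $n$ are used, so I would present it with care; the remaining verifications are routine.
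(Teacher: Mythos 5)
Your proof is correct and complete: the unit-inverse formula for $1+n$ with $n$ nilpotent, the identification of $\ker\phi$ with $1+\mathfrak{N}$, and the lifting argument for surjectivity are all accurate, and you rightly flag surjectivity as the only step needing the nilpotence of $n$ and commutativity of $A$. The paper itself gives no proof of this proposition, treating it as a standard fact, so there is nothing to compare against; your argument is the canonical one and would serve as the omitted proof.
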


We note that  for finite characteristic rings the exact sequence  \eqref{success} always splits (see \cite[Thm 3.1]{DCDampa}). This is no longer true in general, as shown in \cite[Ex 2]{dcdBLMS}).

\smallskip

The units of a reduced ring of finite characteristic rings are characterized as follows.
\label{positive-char}

\begin{prop}
  \label{reduced-finiti} {\sl 
  The finitely generated abelian groups which are the groups of units of reduced rings of positive characteristic 
  are exactly those of the form
$$\prod_{i=1}^k\F_{p_i^{n_i}}^*\times \Z^g$$
where $k, n_1,\dots, n_k$ are positive integers, $\{p_1,\dots,p_k\}$ are not necessarily distinct prime numbers
and $g\ge0.$
}
\end{prop}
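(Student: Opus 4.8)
The plan is to prove both implications, handling the realizability (``if'') direction by an explicit construction and the necessity (``only if'') direction by first reducing to prime characteristic and then invoking the structure theory of finite reduced commutative rings. Since in positive characteristic the rank is completely unconstrained, no interaction between $T$ and $g$ is expected, in contrast with the torsion-free case.

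For the ``if'' direction, given a group $\prod_{i=1}^k\F_{p_i^{n_i}}^*\times\Z^g$ I would set $R=\prod_{i=1}^k\F_{p_i^{n_i}}$, a finite reduced ring of positive characteristic with $R^*\cong\prod_{i=1}^k\F_{p_i^{n_i}}^*$, and then pass to the Laurent polynomial ring $A=R[x_1,\dots,x_g,x_1^{-1},\dots,x_g^{-1}]$. This ring is again reduced, because it decomposes as the product $\prod_{i=1}^k\F_{p_i^{n_i}}[x_1,\dots,x_g,x_1^{-1},\dots,x_g^{-1}]$ of (localized) polynomial rings over fields, each of which is a domain; it clearly has positive characteristic; and by the description of units of Laurent polynomial rings recalled in Section \ref{notation} one gets $A^*\cong R^*\times\Z^g$, which is the prescribed group.

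For the ``only if'' direction, let $A$ be a reduced ring of positive characteristic $n$ with $A^*$ finitely generated. The fundamental subring $A_0=\Z/n\Z$ is a subring of the reduced ring $A$, hence reduced, so $n=q_1\cdots q_m$ is squarefree. The images in $A$ of the idempotents given by $\Z/n\Z\cong\prod_{j=1}^m\Z/q_j\Z$ are orthogonal idempotents summing to $1$, so $A\cong\prod_{j=1}^m A_j$ with each $A_j$ reduced of prime characteristic $q_j$ and $A^*\cong\prod_{j=1}^m A_j^*$. It therefore suffices to treat a reduced ring $A$ of prime characteristic $p$. By Lemma \ref{minimum} we may assume $A=\F_p[A^*]$, and then $B=\F_p[(A^*)_{tors}]$ is finite (it is integral and finitely generated over $\F_p$), reduced as a subring of $A$, and satisfies $B^*=(A^*)_{tors}$. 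A finite reduced commutative ring is a finite product of finite fields, so $B\cong\prod_{i=1}^k\F_{p^{n_i}}$ and hence $(A^*)_{tors}\cong\prod_{i=1}^k\F_{p^{n_i}}^*$; consequently $A^*\cong\prod_{i=1}^k\F_{p^{n_i}}^*\times\Z^{g_A}$ with $g_A\ge0$. Reassembling these descriptions over the primes $q_1,\dots,q_m$ and relabelling the exponents $n_i$ gives the stated form.

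I expect no genuine obstacle here: the only steps needing mild care are the reduction to prime characteristic via the idempotent splitting, and the observation that a Laurent polynomial ring over a finite product of fields stays reduced (immediate after splitting it into a product of polynomial rings over fields). This is consistent with the dichotomy noted in Remark \ref{rem-differenze}: the finite characteristic case is governed entirely by finite rings, so every such $A^*$ arises already from a finite ring tensored up by a Laurent construction, and the rank $g\ge0$ is arbitrary.
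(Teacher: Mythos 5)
Your proof is correct and follows essentially the same route as the paper: the realizability direction is the identical Laurent-polynomial construction over $R=\prod_i\F_{p_i^{n_i}}$, and the necessity direction rests on the same facts, namely that $B=A_0[(A^*)_{tors}]$ is a finite reduced subring with $B^*=(A^*)_{tors}$ (Lemma \ref{minimum}) and that a finite reduced commutative ring is a product of finite fields. Your preliminary idempotent splitting of $A$ by the prime factors of the (squarefree) characteristic is harmless but unnecessary: the paper applies the artinian structure theory directly to $B$ over $\Z/n\Z$ without first reducing to prime characteristic.
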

\begin{proof}
Let $A$ be a reduced ring of characteristic $n$, such that  $A^*\cong (A^*)_{tors}\times \Z^g$, with $(A^*)_{tors}$  finite and $g\ge0$. The ring  $B=\Z/n\Z[(A^*)_{tors}]$ is a finite  ring and by Lemma \ref{minimum} $B^*=(A^*)_{tors}$. Since $B$ is finite, $B$ is artinian and so it is a product of local  artinian rings. Moreover,  a reduced local artinian ring is a field, hence $B$ is a product of finite fields  (see also \cite[Corollary 3.2]{DCDampa}) and we get that $(A^*)_{tors}=B^*$ has the required form. 

On the other hand, let the $p_i$'s, $n_i$'s and $g$ be as in the statement and put $R=\prod_{i=1}^k\F_{p_i^{n_i}}.$ Then the ring $R[x_1,\dots,x_g,x_1^{-1},\dots,x_g^{-1}]$ has group of units isomorphic to  $\prod_{i=1}^k\F_{p_i^{n_i}}^*\times \Z^g$. 
\end{proof}
The following proposition together with the results of the previous section allows us to classify the finitely generated abelian groups which arise as group of units of a reduced ring.

\begin{prop}{\rm (\cite[Prop.~1]{PearsonSchneider70})}
\label{ps} 
{\sl Let $A$ be a commutative ring which is finitely generated and integral over its fundamental subring. Then $A=A_1\oplus A_2$, where $A_1$ is a finite ring and the torsion ideal of $A_2$ is contained in its nilradical.}
\end{prop}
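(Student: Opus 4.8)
The plan is to prove Proposition~\ref{ps} by splitting off the finite part coming from the torsion. Write $A_0$ for the fundamental subring of $A$. Since $A$ is integral and finitely generated over $A_0$, it is a finitely generated $A_0$-module; in particular the torsion subgroup $A_{\mathrm{tors}}$ of the additive group $(A,+)$ is a finitely generated torsion $\Z$-module, hence finite, and it is clearly an ideal of $A$ (it is closed under multiplication by $A$ because multiplication is $\Z$-bilinear). Let $m=|A_{\mathrm{tors}}|$, or more precisely the exponent of this finite group, so $mA_{\mathrm{tors}}=0$. The first step is to observe that $A/A_{\mathrm{tors}}$ is torsion-free as a $\Z$-module, and then to locate an idempotent: I would consider the ideals $mA$ and $A_{\mathrm{tors}}$, show they are comaximal (i.e.\ $mA+A_{\mathrm{tors}}=A$) or, what is cleaner, work with the $m$-divisible part and produce a central idempotent $e$ with $A=eA\oplus(1-e)A$, where $eA$ carries the torsion.

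Here is the concrete route I expect to take. Because $A$ is a finitely generated $\Z$-module, the structure theorem gives $(A,+)\cong A_{\mathrm{tors}}\oplus\Z^{n}$ for some $n$, and $A_{\mathrm{tors}}$ is annihilated by some integer $m\ge 1$. Consider the map ``multiplication by $m$'' on $A$: its kernel is exactly $A_{\mathrm{tors}}$, and for a suitable power $N=m^{k}$ (taking $k$ large enough that it stabilizes on the finite piece) one has $A=\ker(N\cdot)\oplus N A$ as abelian groups, namely $A=A_{\mathrm{tors}}\oplus A_1'$ with $A_1'=NA$ a torsion-free subgroup and $A_{\mathrm{tors}}$ finite. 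The point is to upgrade this additive splitting to a ring decomposition. For that I would use that $A_{\mathrm{tors}}$ is an ideal: the quotient ring $\bar A=A/A_{\mathrm{tors}}$ is torsion-free and still finitely generated and integral over (the image of) $A_0$, while the ``complementary'' piece should be a finite ring. The standard device is: since $A_{\mathrm{tors}}$ is a finite ideal, the ring $A$ maps onto $A/A_{\mathrm{tors}}$ and also onto $A/NA$; if $A_{\mathrm{tors}}$ and $NA$ are comaximal then CRT (Section~\ref{CRT}) gives $A\cong A/NA\times A/A_{\mathrm{tors}}$ as rings, with $A/NA$ finite (it is a quotient of the finitely generated $\Z/N\Z$-algebra $A/NA$, which is a finite module over the finite ring $\Z/N\Z$) and $A/A_{\mathrm{tors}}$ torsion-free. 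Comaximality holds because, after possibly enlarging $N$, the additive decomposition $A=A_{\mathrm{tors}}\oplus NA$ already shows $1=e+f$ with $e\in A_{\mathrm{tors}}$, $f\in NA$, and one checks $e$ is idempotent modulo the torsion constraints; then $eA_{\mathrm{tors}}\subseteq A_{\mathrm{tors}}$ and the Peirce decomposition $A=eAe\oplus(1-e)A(1-e)$ does the job, with $A_1=eA$ finite and the torsion ideal of $A_2=(1-e)A$ equal to zero, a fortiori contained in its nilradical.

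The main obstacle is precisely the passage from the additive splitting to an \emph{idempotent} splitting: a finite ideal $I$ in a ring need not be generated by a central idempotent in general, and one must use that $A$ is integral over $A_0=\Z$ (or $\Z/n\Z$) to guarantee that the short exact sequence $0\to A_{\mathrm{tors}}\to A\to \bar A\to 0$ splits as rings. The cleanest argument is to pick $N$ with $N\cdot A_{\mathrm{tors}}=0$ and $N^2 A=NA$ on the torsion-free complement (possible since $\bar A$ is torsion-free and finitely generated, so $NA\cap A_{\mathrm{tors}}=0$ for $N$ the exponent of $A_{\mathrm{tors}}$), whence $A_{\mathrm{tors}}+NA=A$ follows from the additive direct-sum decomposition; then CRT applies directly because $A_{\mathrm{tors}}$ and $NA$ are comaximal ideals with $A_{\mathrm{tors}}\cap NA=0$. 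I would double-check that $A/NA$ is genuinely finite: it is a quotient of $A$, which is a finitely generated $\Z$-module, by a subgroup containing $NA$, so $A/NA$ is a finitely generated $\Z/N\Z$-module, hence finite. This yields $A=A_1\oplus A_2$ with $A_1=A/A_{\mathrm{tors}}^{\perp}$... more simply $A_1\cong A/NA$ finite and $A_2\cong A/A_{\mathrm{tors}}$ whose additive group is torsion-free, so its torsion ideal is $0\subseteq\operatorname{nil}(A_2)$, completing the proof. (If one only wants the weaker conclusion as literally stated, ``torsion ideal contained in the nilradical'', even the rougher decomposition suffices, but the torsion-free conclusion for $A_2$ comes for free from this argument.)
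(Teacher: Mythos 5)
The paper itself gives no proof of this proposition; it is imported verbatim from Pearson--Schneider. Your attempt, however, has a genuine gap, and it is aimed at a conclusion that is actually false. The pivotal step --- that for suitable $N$ one has $A=\ker(N\cdot)\oplus NA$ as abelian groups, hence $A_{\mathrm{tors}}+NA=A$ and the CRT applies --- fails as soon as $A$ has a nontrivial free part: already for $A=\Z$ and $N=2$ one gets $\ker(N\cdot)\oplus NA=2\Z\neq\Z$, and in general $A_{\mathrm{tors}}+NA=A_{\mathrm{tors}}\oplus N F$ where $F$ is the free part, so the two ideals are never comaximal unless $A$ is finite or torsion-free. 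The additive complement of $A_{\mathrm{tors}}$ is a free subgroup, but it need not be an ideal; this is exactly the difficulty you yourself flag (``a finite ideal need not be generated by a central idempotent'') and then do not resolve, since your comaximality claim rests on the false additive splitting.

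Moreover, the stronger statement you are steering toward --- that $A_2$ can be taken torsion-free, i.e.\ that $0\to A_{\mathrm{tors}}\to A\to A/A_{\mathrm{tors}}\to 0$ splits as rings --- is false. Take $A=\Z[x]/(2x,x^2)$: it is finitely generated and integral over $\Z$, its torsion ideal is $\{0,x\}\neq 0$, yet $(a+bx)^2=a^2$ in this ring, so the only idempotents are $0$ and $1$ and $A$ is indecomposable; no finite direct factor can absorb the torsion. This example is precisely why the proposition asserts only that the torsion ideal of $A_2$ lies in the nilradical (here $x^2=0$). A correct argument runs differently: the torsion ideal $T$ is a finite (non-unital) commutative ring, so for each $t\in T$ some power $t^m$ is idempotent, nonzero exactly when $t$ is not nilpotent; letting $e\in T$ be the join $e\vee f=e+f-ef$ of the finitely many idempotents of $T$, the Peirce decomposition $A=eA\oplus(1-e)A$ has $eA\subseteq T$ finite, and every torsion element of $(1-e)A$ must be nilpotent, since otherwise it would yield a nonzero idempotent of $T$ orthogonal to $e$, contradicting the maximality of $e$.
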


Now, if $A$ is reduced then the finite ring $A_1$ is reduced and $A_2$ is torsion-free. Then, Theorems \ref{torfree} and \ref{reduced-finiti} immediately gives the following.

\begin{theo}
\label{reduced-all}
{\sl The  finitely generated abelian groups that occur as groups of units of  reduced rings are those of the form
$$\prod_{i=1}^k\F_{p_i^{n_i}}^*\times T\times \Z^g$$
where $k, n_1,\dots, n_k$ are positive integers, $\{p_1,\dots,p_k\}$ are not necessarily distinct prime numbers, $T$ is any finite abelian group of even order and $g\ge g(T)$.
}
\end{theo}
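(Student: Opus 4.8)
The statement is an immediate corollary of the structural decomposition in Proposition \ref{ps} together with the two classification theorems already established, Theorem \ref{torfree} for torsion-free rings and Proposition \ref{reduced-finiti} for reduced rings of positive characteristic. So the proof is essentially a bookkeeping argument: split, classify each piece, recombine. First I would reduce to the finitely generated integral case. If $A$ is a reduced ring with $A^*$ finitely generated, then by Lemma \ref{minimum} the subring $B=\Z[(A^*)_{tors}]$ (or $\Z/n\Z[(A^*)_{tors}]$ in positive characteristic) is finitely generated and integral over its fundamental subring and has the same torsion units as $A$; but here I actually want to keep the full rank, so instead I would pass to $A_0[A^*]$ as in Section \ref{notation}, which is finitely generated and integral over $A_0$ and has the same group of units as $A$. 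Being a subring of a reduced ring, $A_0[A^*]$ is itself reduced, so without loss of generality $A$ is reduced, finitely generated and integral over its fundamental subring.

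\smallskip

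\textbf{Applying the splitting.} Now Proposition \ref{ps} gives $A=A_1\oplus A_2$ with $A_1$ finite and the torsion ideal of $A_2$ contained in its nilradical. Since $A$ is reduced, both summands are reduced; in particular $A_2$ is reduced with trivial nilradical, so its torsion ideal is zero, i.e.\ $A_2$ is torsion-free. Because $A^*\cong A_1^*\times A_2^*$, the problem decomposes. For $A_1$: it is a finite reduced ring, hence (being artinian and reduced) a finite product of finite fields $\prod_{i=1}^k\F_{p_i^{n_i}}$, so $A_1^*\cong\prod_{i=1}^k\F_{p_i^{n_i}}^*$ with the $p_i$ not necessarily distinct primes and $k,n_i\ge 1$ arbitrary. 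For $A_2$: it is torsion-free with finitely generated group of units, so by Theorem \ref{torfree}, $A_2^*\cong T\times\Z^g$ where $T$ is a finite abelian group of even order and $g\ge g(T)$. (One should note $A_2^*$ could in principle be trivial only if $A_2$ is the zero ring; if $A_2\ne 0$ then $-1\in A_2^*$ forces $(A_2^*)_{tors}$ to have even order, consistent with the hypothesis of Theorem \ref{torfree}. The degenerate case $A_2=0$ is absorbed by allowing the torsion-free factor to be absent, which is covered by the case $A_1=A$.) Putting the two together, $A^*\cong\prod_{i=1}^k\F_{p_i^{n_i}}^*\times T\times\Z^g$ with $g\ge g(T)$, which is exactly the asserted form.

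\smallskip

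\textbf{The converse.} For realizability, given data $k$, $n_1,\dots,n_k$, primes $p_1,\dots,p_k$, a finite abelian group $T$ of even order, and $g\ge g(T)$, I would take the ring $A=\left(\prod_{i=1}^k\F_{p_i^{n_i}}\right)\oplus R$ where $R$ is a torsion-free ring with $R^*\cong T\times\Z^g$, which exists by the ``if'' part of Theorem \ref{torfree} (constructed there for $g=g(T)$ and then extended to all $g>g(T)$ by adjoining Laurent variables). The direct sum of a finite reduced ring and a torsion-free reduced ring is reduced, and its group of units is the product $\prod_{i=1}^k\F_{p_i^{n_i}}^*\times T\times\Z^g$, as desired. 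The edge cases ($k=0$, i.e.\ no finite factor; or $T$ trivial together with $A_2$ absent) are handled by the obvious conventions.

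\smallskip

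\textbf{Main obstacle.} There is essentially no new mathematical difficulty here; all the substance lives in Theorem \ref{torfree}. The only point requiring a little care is the verification that in the decomposition $A=A_1\oplus A_2$ coming from Proposition \ref{ps}, the reduced hypothesis transfers correctly to both factors and forces $A_2$ to be genuinely torsion-free (not merely ``torsion ideal inside the nilradical''), together with the routine but necessary reduction at the outset to a ring that is finitely generated and integral over its fundamental subring so that Proposition \ref{ps} applies. Once those are in place the result drops out.
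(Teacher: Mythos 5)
Your proposal is correct and follows essentially the same route as the paper: reduce to a ring finitely generated and integral over its fundamental subring, apply Proposition \ref{ps} to split $A=A_1\oplus A_2$ with $A_1$ finite reduced (hence a product of finite fields) and $A_2$ torsion-free, then invoke Theorem \ref{torfree} and Proposition \ref{reduced-finiti}, with the converse given by a direct sum of explicit examples. The paper states this in one line; you have merely supplied the routine verifications (transfer of reducedness to the summands, the degenerate cases) that the paper leaves implicit.
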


\bibliographystyle{amsalpha}

\bibliography{biblio}

\def\Dbar{\leavevmode\lower.6ex\hbox to 0pt{\hskip-.23ex \accent"16\hss}D}
  \def\cfac#1{\ifmmode\setbox7\hbox{$\accent"5E#1$}\else
  \setbox7\hbox{\accent"5E#1}\penalty 10000\relax\fi\raise 1\ht7
  \hbox{\lower1.15ex\hbox to 1\wd7{\hss\accent"13\hss}}\penalty 10000
  \hskip-1\wd7\penalty 10000\box7}
  \def\cftil#1{\ifmmode\setbox7\hbox{$\accent"5E#1$}\else
  \setbox7\hbox{\accent"5E#1}\penalty 10000\relax\fi\raise 1\ht7
  \hbox{\lower1.15ex\hbox to 1\wd7{\hss\accent"7E\hss}}\penalty 10000
  \hskip-1\wd7\penalty 10000\box7}
\providecommand{\bysame}{\leavevmode\hbox to3em{\hrulefill}\thinspace}
\providecommand{\MR}{\relax\ifhmode\unskip\space\fi MR }
\providecommand{\MRhref}[2]{%
  \href{http://www.ams.org/mathscinet-getitem?mr=#1}{#2}
}
\providecommand{\href}[2]{#2}
\begin{thebibliography}{BHPM18}

\bibitem[Apo70]{apostol70}
T.~M. Apostol, \emph{Resultants of cyclotomic polynomials}, Proc. Amer. Math.
  Soc. \textbf{24} (1970), 457--462.

\bibitem[BHPM18]{moree}
B.~Bzd\c{e}ga, A.~Herrera-Poytatos, and P.~Moree, \emph{Cyclotomic polynomials
  and roots of unity}, Acta Arithmetica \textbf{184} (2018), 215--230.

\bibitem[CL15]{CheboluLockridge15}
Sunil~K. Chebolu and Keir Lockridge, \emph{Fuchs' problem for indecomposable
  abelian groups}, J. Algebra \textbf{438} (2015), 325--336.

\bibitem[CL17]{CheboluLockridge17}
\bysame, \emph{Fuchs' problem for dihedral groups}, JPAA \textbf{221} (2017),
  971--982.

\bibitem[Cor63]{Corner63}
A.~L.~S. Corner, \emph{Every countable reduced torsion-free ring is an
  endomorphism ring}, Proc. Lond. Math. Soc. \textbf{13} (1963), no.~3,
  687--710.

\bibitem[CR81]{CurtisReiner1981-1}
C.~W. Curtis and I.~Reiner, \emph{Methods of representation theory: With
  applications to finite groups and orders}, vol.~1, Wiley, New York, 1981.

\bibitem[DCD18a]{DCDampa}
I.~Del~Corso and R.~Dvornicich, \emph{Finite groups of units of finite
  characteristic rings}, Annali di Matematica \textbf{197} (2018), 66--671.

\bibitem[DCD18b]{dcdBLMS}
\bysame, \emph{On {F}uchs' {P}roblem about the group of units of a ring}, Bull.
  London Math. Soc. (2018), no.~50, 274--292.

\bibitem[DO14]{DavisOcchipinti2014}
C.~Davis and T.~Occhipinti, \emph{Which finite simple groups are unit groups?},
  Journal of Pure and Applied Algebra \textbf{18} (2014), 743--744.

\bibitem[Fuc60]{Fuchs60}
L.~Fuchs, \emph{Abelian groups}, 3rd ed., Pergamon, Oxford, 1960.

\bibitem[Gil63]{Gilmer63}
R.~W. Gilmer, \emph{Finite rings with a cyclic group of units}, Amer. J. Math.
  \textbf{85} (1963), 447--452.

\bibitem[HH65]{HallettHirsch65}
J.~T. Hallett and K.~A. Hirsch, \emph{Torsion-free groups having finite
  automorphism groups}, J. of Algebra \textbf{2} (1965), 287--298.

\bibitem[HZ66]{HirschZassenhaus66}
K.~A. Hirsch and H.~Zassenhaus, \emph{Finite automorphism groups of torsion
  free groups}, J. London Math. Soc. \textbf{41} (1966), 545--549.

\bibitem[Lan94]{Lang94ANT}
Serge Lang, \emph{Algebraic number theory}, 2nd ed., Graduate Texts in
  Mathematics, vol. 110, Springer-Verlag, New York, 1994.

\bibitem[Neu99]{Neukirch99}
J\"urgen Neukirch, \emph{Algebraic number theory}, Grundlehren der
  mathematischen Wissenschaften, vol. 322, Springer-Verlag, 1999.

\bibitem[PS70]{PearsonSchneider70}
K.~R. Pearson and J.~E. Schneider, \emph{Rings with a cyclic group of units},
  J. of Algebra \textbf{16} (1970), 243--251.

\bibitem[Was87]{washington}
L.C. Washington, \emph{Introduction to cyclotomic fields}, 2nd ed., Graduate
  Texts in Mathematics, vol.~83, Springer-Verlag, New York, 1987.

\end{thebibliography}
\end{document}